\newcommand{\RR}{\mathbbm{R}}
\newcommand{\QQ}{\mathbbm{Q}}
\newcommand{\ZZ}{\mathbbm{Z}}
\newcommand{\PP}{\mathbbm{P}}
\newtheorem{theorem}{Theorem}
\newtheorem{lemma}[theorem]{Lemma}
\newtheorem{proposition}[theorem]{Proposition}
\newtheorem{definition}[theorem]{Definition}
\newtheorem{corollary}[theorem]{Corollary}
\newtheorem{conjecture}[theorem]{Conjecture}
\providecommand{\keywords}[1]{\textbf{\textit{Index terms---}} #1}
\begin{document}
\title{On the $n$-th row of the graded Betti table of an $n$-dimensional toric variety}
\author{Alexander Lemmens}
\date{}
\maketitle
\footnotetext[1]{The final publication is available at springer via http://dx.doi.org/10.1007/s10801-017-0786-y}
\begin{abstract}
We prove an explicit formula for the first non-zero entry in the $n$-th row of the graded Betti table of an $n$-dimensional projective toric variety associated to a normal polytope with at least one interior lattice point.
This applies to Veronese embeddings of $\mathbb{P}^n$.
We also prove an explicit formula for the entire $n$-th row when the interior of the polytope is one-dimensional. All results are valid over an arbitrary field $k$.
\end{abstract}
\keywords{syzygies, toric varieties, lattice polytopes, Koszul cohomology}  
\tableofcontents

\section{Introduction}
Let $k$ be a field. In this article we study syzygies of projectively embedded toric varieties $X/k$.
More precisely, we give explicit formulas in terms of the combinatorics of the defining polytope for certain graded Betti numbers, which appear in the minimal free resolution of the homogeneous coordinate ring of $X$ as a graded module, obtained by repeatedly taking syzygies. These Betti numbers are typically gathered in the graded Betti table:
$$
\begin{array}{r|cccccc}
  & 0 & 1 & 2 & 3 & 4 & \ldots \\
\hline
0 & 1 & 0 & 0 & 0 & 0 & \dots  \\
1 & 0 & \kappa_{1,1} & \kappa_{2,1} & \kappa_{3,1} & \kappa_{4,1} & \dots  \\
2 & 0 & \kappa_{1,2} & \kappa_{2,2} & \kappa_{3,2} & \kappa_{4,2} & \dots  \\
3 & 0 & \kappa_{1,3} & \kappa_{2,3} & \kappa_{3,3} & \kappa_{4,3} & \dots  \\
\vdots & \vdots & \vdots & \vdots & \vdots & \vdots &  \\
\end{array}$$

Here $\kappa_{p,q}$ is the number of degree $p+q$ summands of the $p$-th module in the resolution. One alternatively defines $\kappa_{p,q}$ as the dimension of the Koszul cohomology space $K_{p,q}(X,O(1))$. The graded Betti table is expected to contain a wealth of geometric information and is the subject of several important open problems and conjectures. But the vast part of it is poorly understood.\\

For a number of entries an explicit formula in terms of the defining lattice polytope was known before. Examples of this can be found in \cite{bettitoricsurfaces,heringphd}. But for this paper the most relevant result is that of
Schenck, who proved \cite{schenck} that for projective toric surfaces coming from a lattice polygon with $b$ lattice boundary points $\kappa_{p,2}=0$ for all $p\leq b-3$. Hering proved in \cite[Theorem IV.20]{heringphd} using a theorem of Gallego-Purnaprajna \cite[Theorem 1.3]{gallego} that the next entry $\kappa_{b-2,2}$ is not zero. Both results were already known in the case this polygon is equal to the triangle with vertices $(0,d)$, $(d,0)$, $(0,0)$. This polygon gives the $d$-fold Veronese embedding of the projective plane, for which Loose \cite{loose} proved that the number of zeroes in the quadratic strand equals $3d-3$ (not counting $\kappa_{0,2}=0$ as a zero). This result was independently rediscovered by Ottaviani and Paoletti \cite{ottaviani}, and they generalized this to the following conjecture:
\begin{conjecture}
For the $d$-fold Veronese embedding of $n$-dimensional projective space $\kappa_{p,q}=0$ whenever $p\leq 3d-3$ and $q\geq 2$.
\end{conjecture}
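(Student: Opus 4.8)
Write $P=d\Delta_n$ for the $d$-th dilate of the standard $n$-simplex, so that the $d$-uple Veronese of $\PP^n$ is the projective toric variety $X_P$ and $\kappa_{p,q}=\dim_k K_{p,q}(X_P,\mathcal{O}(1))=\dim_k K_{p,q}(\PP^n,\mathcal{O}(d))$. The plan is to turn the conjecture into a homology-vanishing statement for the simplicial complexes computing toric Koszul cohomology and to run an induction on $n$ whose base case is already known. Only finitely many rows are at issue, since $K_{p,q}=0$ for all $p$ once $q$ exceeds the regularity $n+1-\lceil(n+1)/d\rceil$ of the coordinate ring; for $n=2$ the only row with $q\ge 2$ is $q=2$, the polygon $d\Delta_2$ has $b=3d$ boundary lattice points, and the required vanishing $\kappa_{p,2}=0$ for $p\le 3d-3$ is precisely Schenck's theorem, while Hering's result shows $\kappa_{3d-2,2}\ne 0$. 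So the base case $n=2$ is in hand, and since the bound $3d-3$ is \emph{uniform} in $n$ the content is the inductive step from $n-1$ to $n$; equivalently, one must show that the $d$-uple Veronese of $\PP^n$ satisfies property $N_{3d-3}$. It is most efficient to treat all rows $q\ge 2$ simultaneously through the toric combinatorics rather than row by row.

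The main route is the combinatorial description of the Koszul cohomology of a projectively normal toric variety — a Hochster-type formula for the $\mathrm{Tor}$ of the associated affine semigroup ring. It yields
\[
\kappa_{p,q}\;=\;\sum_{m\in (p+q)P\cap\ZZ^n}\dim_k\widetilde{H}_{p-1}(\Gamma_{P,m};k),
\]
where $\Gamma_{P,m}$ is the simplicial complex on the vertex set $P\cap\ZZ^n$ whose faces are the subsets $A$ for which $m-\sum_{a\in A}a$ is still a nonnegative integer combination of lattice points of $P$, of the appropriate height. The conjecture then says: for $P=d\Delta_n$, for every $q\ge 2$, every $p\le 3d-3$, and every relevant $m$, the group $\widetilde{H}_{p-1}(\Gamma_{P,m};k)$ vanishes. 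I would prove this by casework on $m$: whenever some coordinate of $m$ is large enough that $m$ admits a lattice decomposition using a vertex of $P$ that can be absorbed by every face, $\Gamma_{P,m}$ has an explicit cone point and is contractible; for the remaining "balanced" lattice points $m$ one wants a shelling or a discrete Morse matching showing that the homology of $\Gamma_{P,m}$ avoids degree $p-1$ throughout the range $p\le 3d-3$, modelled on Schenck's analysis of the case $n=2$. The number $3d-3$ should surface here as the connectivity threshold: below it the lattice width of $d\Delta_n$ and of its dilates, measured relative to $P$, is large enough to force every $\Gamma_{P,m}$ to be too highly connected to carry homology in degree $p-1$. Since reduced simplicial homology — and a fortiori contractibility or a field-independent shelling — is insensitive to the ground field, a proof organized this way is automatically valid over an arbitrary $k$, as the statement requires.

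An equivalent framing that displays the induction on $n$ uses the change-of-rings spectral sequence for the surjection of polynomial rings $\mathrm{Sym}\,H^0(\mathcal{O}_{\PP^n}(d))\twoheadrightarrow\mathrm{Sym}\,H^0(\mathcal{O}_{\PP^{n-1}}(d))$ induced by restricting sections along $\PP^{n-1}\subset\PP^n$. It converges to the Betti numbers of the Veronese of $\PP^n$, and its $E_2$-page combines the Betti numbers of the Veronese of $\PP^{n-1}$ — which vanish for $q\ge 2$, $p\le 3d-3$ by the inductive hypothesis, \emph{with the same bound} — with correction modules assembled from the higher Koszul homology of the $\binom{d-1+n}{n}$ defining relations of the restriction. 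One then has to check that these correction modules contribute nothing to $\kappa_{p,q}$ in the relevant range.

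The hard part, in either formulation, is uniformity in $n$: enlarging the ambient space lengthens the linear strand of the resolution without any a priori mechanism pushing the quadratic strand outward, and no known numerical criterion (Green–Lazarsfeld $N_p$-bounds, gonality-type bounds, regularity estimates) is strong enough to produce the linear-in-$d$, constant-$3$ threshold for all $n$ simultaneously. Combinatorially this is the "balanced $m$" case: the complexes $\Gamma_{P,m}$ grow in dimension with $n$, the cone-point trick covers only part of the range of $m$, and for the rest one needs a genuine and $n$-independent connectivity bound. In the inductive picture it is the correction modules, which are new objects supported on thickenings of the Veronese of $\PP^{n-1}$ and carry no obvious reason to obey the same bound. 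Pinning down the argument that controls these — equivalently, showing that passing to a larger projective space never creates a quadratic or higher syzygy of the $d$-uple Veronese before column $3d-2$ — is exactly the step that has so far kept the statement a conjecture.
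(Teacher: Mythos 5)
This statement is the Ottaviani--Paoletti conjecture; the paper records it as Conjecture 1 precisely because no proof is known, and it says explicitly ``we will not prove this conjecture.'' Your proposal is therefore not being compared against a proof in the paper, and on its own terms it is a programme rather than an argument. The reductions you set up are sound as far as they go: the Hochster-type formula expressing $\kappa_{p,q}$ as a sum of reduced homology groups of the complexes $\Gamma_{P,m}$ is valid for the semigroup ring of $d\Delta_n$; the base case $n=2$ does follow from Schenck's theorem, since $d\Delta_2$ has $3d$ boundary lattice points; and the cone-point observation does dispose of those $m$ admitting a decomposition through an absorbable vertex. But the entire content of the conjecture sits in the step you defer: a field-independent, $n$-uniform connectivity bound for the ``balanced'' complexes $\Gamma_{P,m}$ in degrees up to $3d-4$. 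No such bound, shelling, or Morse matching is exhibited, and you acknowledge that this is ``exactly the step that has so far kept the statement a conjecture.'' Naming the obstruction is not the same as overcoming it.

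The inductive framing has an additional structural problem: the known comparison between the Veronese of $\PP^n$ and its linear section, the Veronese of $\PP^{n-1}$, runs the wrong way for you. Property $N_p$ passes \emph{down} to a hyperplane section (syzygies restrict), so $N_{3d-3}$ for $\PP^n$ would yield it for $\PP^{n-1}$; your induction needs the converse, and lifting $N_p$ from a section to the ambient variety fails in general. The ``correction modules'' in your change-of-rings spectral sequence are precisely where new quadratic syzygies could appear, and you give no mechanism excluding them in columns $p\le 3d-3$. So the proposal correctly assembles the known ingredients (Schenck, Hering, the $q>n$ vanishing, Hochster's formula) but supplies no new idea that closes the gap; it does not constitute a proof.
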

This is known as property $N_p$ with $p=3d-3$. For $d>n$ this is generalized by the following conjecture \cite[p.\ 643, Conjecture 7.6]{einlazarsfeld} which the authors already proved for $q=n$:
\begin{conjecture}\label{ein}
If we take a minimal free resolution of the line bundle $\mathcal{O}_{\PP^n}(b)$ on the Veronese embedding of $\PP^n$ of degree $d$ with $d\geq b+n+1$ then $\kappa_{p,q}=0$ for all $1\leq q\leq n$ and
$$p<\binom{d+q}{q}-\binom{d-b-1}{q}-q.$$
\end{conjecture}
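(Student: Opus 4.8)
The plan is to turn the conjecture into a vanishing statement for the Koszul cohomology of the Veronese and then to split it into the case $q=n$, where duality makes everything explicit, and the cases $1\le q\le n-1$, which need a genuinely combinatorial analysis. Write $S=k[z_0,\dots,z_n]$, let $R$ be the polynomial ring on a $k$-basis of $S_d$, and let $M=\bigoplus_{m\ge 0}S_{dm+b}$, so that $\kappa_{p,q}=\dim_k\operatorname{Tor}^R_p(M,k)_{p+q}=\dim_k K_{p,q}(\PP^n,\mathcal{O}(b);\mathcal{O}(d))$. Resolving $M$ by the Koszul complex of $R$ and splitting into monomial multidegrees gives
\[
K_{p,q}\bigl(\PP^n,\mathcal{O}(b);\mathcal{O}(d)\bigr)\ \cong\ \bigoplus_{\mu\in\ZZ_{\ge 0}^{n+1},\ |\mu|=d(p+q)+b}\ \widetilde H_{p-1}(\Gamma_\mu;k),
\]
where $\Gamma_\mu$ is the simplicial complex whose vertices are the degree-$d$ monomials dividing $z^\mu$ and whose faces are the sets $\{z^{a_1},\dots,z^{a_j}\}$ with $z^{a_1}\cdots z^{a_j}\mid z^\mu$. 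The conjecture is thus equivalent to: $\widetilde H_{p-1}(\Gamma_\mu;k)=0$ whenever $1\le q\le n$, $|\mu|=d(p+q)+b$ and $p<\binom{d+q}{q}-\binom{d-b-1}{q}-q$. So everything reduces to connectivity estimates for this one explicit family of complexes, together with a shortcut in the top row.

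For $q=n$ I would avoid the connectivity analysis. Since $\PP^n$ has no intermediate cohomology and $d\ge b+n+1$, the module $M$ is a maximal Cohen--Macaulay $R$-module whose canonical module is, up to a shift by one, the analogous module $M^{\ast}=\bigoplus_{m\ge 0}S_{dm+b^{\ast}}$ with $b^{\ast}:=d-n-1-b\ge 0$. Duality for the minimal free resolution of a Cohen--Macaulay module then gives $\kappa_{p,n}(M)=\kappa_{N-n-p,0}(M^{\ast})$ with $N+1=\binom{d+n}{n}$, so the $q=n$ case is equivalent to the assertion that the linear strand of the minimal free resolution of $M^{\ast}$ has length at most $\binom{b^{\ast}+n}{n}-1=\binom{d-b-1}{n}-1$, i.e.\ $\kappa_{j,0}(M^{\ast})=0$ for $j\ge\binom{d-b-1}{n}$. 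This is exactly the top-row statement established in this paper (in the wider toric setting of normal polytopes with an interior lattice point) and is the content of the Ein--Lazarsfeld $q=n$ theorem; via the monomial model it comes down to checking that for $|\mu|=dj+b^{\ast}$ the complex $\Gamma_\mu$ has no reduced homology in degree $j-1$.

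For $1\le q\le n-1$ there is no such shortcut, and one must show $\widetilde H_{p-1}(\Gamma_\mu;k)=0$ for every $\mu$ with $|\mu|=d(p+q)+b$ when $p$ is below the threshold. The plan is: (i) show that $\widetilde H_\bullet(\Gamma_\mu;k)$ is free and concentrated in the dimensions of the inclusion-maximal faces of $\Gamma_\mu$ --- ideally by proving $\Gamma_\mu$ is nonpure shellable in the sense of Bj\"orner--Wachs (peeling the monomial vertices $z^a$ in decreasing reverse-lexicographic order, the shelling condition coming from divisibility), or failing that by an explicit discrete Morse matching; and (ii) prove the packing estimate that, when $p<\binom{d+q}{q}-\binom{d-b-1}{q}-q$, no inclusion-maximal face of $\Gamma_\mu$ has dimension $\ge p-1$, for any $\mu$ of total degree $d(p+q)+b$. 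Together (i) and (ii) kill $\widetilde H_{p-1}$. The binomial shape of the threshold --- a difference of Ehrhart values of the $q$-dimensional dilated simplex $d\Delta_q$ --- strongly suggests the extremal multidegrees are those supported on only $q+1$ of the $n+1$ variables, so (ii) should be reduced to that sub-simplex and proved by induction on $q$ (or on $n$), the inductive step comparing a multidegree supported on $q+1$ variables with the one obtained by deleting the largest-exponent variable and matching the resulting change in the complex against the change $\binom{d+q}{q}-\binom{d-b-1}{q}-q \leadsto \binom{d+q-1}{q-1}-\binom{d-b-1}{q-1}-(q-1)$ in the bound.

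The step I expect to be the real obstacle is (ii), the sharp packing estimate, together with the uniformity of (i). Since $\Gamma_\mu$ is typically not pure, even granting shellability one still has to pin down \emph{where} its maximal faces sit for \emph{every} $\mu$ of the given total degree and rule out dimension $\ge p-1$; and, more seriously, one must obtain the \emph{exact} binomial constant rather than a bound off by lower-order terms, which is precisely where the fine geometry of $d\Delta_n$ must be used. A secondary difficulty is making the two halves meet: the $q=n$ bound drops out of duality essentially for free while the intermediate bounds come from packing with no duality available, and one must verify that both produce the common value $p=\binom{d+q}{q}-\binom{d-b-1}{q}-q$ as the first index in which the $q$-th row of the Betti table can be non-zero.
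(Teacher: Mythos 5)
The statement you are trying to prove is labelled as a \emph{conjecture} in the paper, and the paper explicitly declines to prove it (``We will not prove this conjecture, but we will prove an explicit formula for $\kappa_{\binom{d+n}{n}-\binom{d-b-1}{n}-n,n}$\dots''). The only case that is actually known is $q=n$, due to Ein--Lazarsfeld, and that is also the only case your proposal genuinely establishes: your duality reduction of the $q=n$ case to the statement that the linear strand of $M^{\ast}=\bigoplus_m S_{dm+b^{\ast}}$ has length at most $\binom{d-b-1}{n}-1$ is correct in substance and is precisely the route the paper takes for its Theorem 3 (Koszul duality as in Aprodu--Nagel, plus Theorem \ref{Tleqp}, which is Green's linear syzygy theorem in this setting). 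For $1\le q\le n-1$ your proposal is a research program, not a proof: steps (i) and (ii) are exactly where the open content of the conjecture lives, and you acknowledge as much. Presenting an open problem's hard core as two ``to do'' items does not constitute a proof.

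Beyond the incompleteness, step (ii) is false as literally stated. For a generic multidegree $\mu$ with $|\mu|=d(p+q)+b$ and all exponents large, one can choose $p+q$ degree-$d$ monomials whose product divides $z^{\mu}$, so $\Gamma_\mu$ has a face, hence a facet, of dimension at least $p+q-1\ge p-1$; thus ``no inclusion-maximal face has dimension $\ge p-1$'' cannot hold. What your strategy actually needs (granting a shelling or Morse matching that concentrates homology in facet dimensions) is that no \emph{facet} has dimension \emph{exactly} $p-1$, i.e.\ every $(p-1)$-dimensional face of $\Gamma_\mu$ extends to a strictly larger face; and even that refined statement, uniformly in $\mu$ and with the sharp binomial threshold, is essentially equivalent to the conjecture itself. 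Nonpure shellability of these squarefree-divisor complexes of Veronese semigroups is likewise not known. So the honest assessment is: the $q=n$ half of your argument reproduces the known result by the same duality the paper uses, and the remaining cases are left open, with the one concrete combinatorial claim you do formulate being incorrect as written.
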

Syzygies of Veronese embeddings are still an active area of research \cite{brunsconca,EinErmanLazarsfeld,greco,park,rubei}. For a short introduction to syzygies and to toric varieties we refer the reader to the next section.\\

We will not prove this conjecture, but we will prove an explicit formula for $\kappa_{\binom{d+n}{n}-\binom{d-b-1}{n}-n,n}$ which is the first non-zero entry on the $n$-th row.
We also prove a formula for the first non-zero entry in the $n$-th row of the Betti table of any projectively normal toric variety of dimension $n$, if this row is not zero. Note that the $n$-th row is the last non-zero row if it is not zero.
We will work over an arbitrary field $k$.
For a convex lattice polytope $\Delta$ we denote by $\Delta^{(1)}$ the convex hull of the lattice points in the topological interior of $\Delta$.
\begin{center}
\begin{tikzpicture}
\draw[lightgray, fill=lightgray] (.5,.5) -- (1.5,.5) -- (1.5,1.5) -- (.5,1.5) -- (.5,.5);
\draw[gray] (-.3,0) -- (2.3,0);
\draw[gray] (-.3,1) -- (2.3,1);
\draw[gray] (-.3,2) -- (2.3,2);
\draw[gray] (-.3,.5) -- (2.3,.5);
\draw[gray] (-.3,1.5) -- (2.3,1.5);
\draw[gray] (0,-.3) -- (0,2.3);
\draw[gray] (1,-.3) -- (1,2.3);
\draw[gray] (2,-.3) -- (2,2.3);
\draw[gray] (.5,-.3) -- (.5,2.3);
\draw[gray] (1.5,-.3) -- (1.5,2.3);
\draw (.5,0) -- (1.5,0) -- (2,.5) -- (2,1) -- (1.5,2) -- (0,1.5) -- (0,.5) -- (.5,0);
\fill[black] (.5,0) circle (1.5 pt);
\fill[black] (1,0) circle (1.5 pt);
\fill[black] (1.5,0) circle (1.5 pt);
\fill[black] (2,.5) circle (1.5 pt);
\fill[black] (2,1) circle (1.5 pt);
\fill[black] (1.5,2) circle (1.5 pt);
\fill[black] (0,1.5) circle (1.5 pt);
\fill[black] (0,1) circle (1.5 pt);
\fill[black] (0,.5) circle (1.5 pt);
\node at (1.1,1) {$\Delta^{(1)}$};
\end{tikzpicture}
\end{center}
We are now ready to formulate our main result:
\begin{theorem}\label{maintheorem}
Let $X$ be a toric variety coming from an $n$-dimensional normal polytope $\Delta\subseteq\RR^n$. Let $\Delta^{(1)}$ be the interior polytope of $\Delta$. Let $S=\Delta\cap\ZZ^n$, $T=\Delta^{(1)}\cap\ZZ^n$ and $N=\#S$, $N^{(1)}=\#T$.
\begin{itemize}
\item If $p<N-N^{(1)}-n$ then $\kappa_{p,n}=0$.
\item If $\Delta^{(1)}$ is not 1-dimensional then $\kappa_{N-N^{(1)}-n,n}=\binom{t+N^{(1)}-2}{N^{(1)}-1}$ where $t$ is the number of translations of $T$ that are contained in $S$. (If $N^{(1)}=0$ then $\kappa_{N-N^{(1)}-n,n}=0$.)
\item If $\Delta^{(1)}$ is 1-dimensional then $\forall p\geq 0$: $\kappa_{p+N-N^{(1)}-n,n}=(p+1)\binom{N-\ell}{N^{(1)}-p-1}$ where $\ell$ is the number of lines parallel to $\Delta^{(1)}$ that are not disjoint with $S$.
\end{itemize}
\end{theorem}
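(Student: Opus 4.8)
The plan is to pass to the canonical module of the homogeneous coordinate ring. Write $R=\bigoplus_m k^{S_m}$ (with $S_m=m\Delta\cap\ZZ^n$, $S_1=S$) for the homogeneous coordinate ring of $X\subseteq\PP^{N-1}$, and $P$ for the polynomial ring on $R_1=k^S$, so that $\kappa_{p,q}(X)=\dim\operatorname{Tor}^P_p(R,k)_{p+q}$. Since $\Delta$ is normal, $R$ is Cohen–Macaulay (Hochster), so $X$ is arithmetically Cohen–Macaulay of codimension $c:=N-n-1$; dualizing the minimal free resolution of $R$ over $P$ produces that of the canonical module $\omega_R$, which by the Danilov–Stanley description is $\omega_R=\bigoplus_{m\ge 1}k^{(m\Delta)^{\circ}\cap\ZZ^n}$, so $(\omega_R)_0=0$ and $(\omega_R)_1=k^{T}$ has dimension $N^{(1)}$. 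Tracking the grading through this duality gives
$$\kappa_{p,n}(X)=\dim\operatorname{Tor}^P_{c-p}(\omega_R,k)_{\,c-p+1}=\beta_{c-p,\,c-p+1}(\omega_R),$$
so the whole $n$-th row of the Betti table of $X$ is, up to reindexing, the \emph{linear strand} of the resolution of $\omega_R$. (One can reach the same complexes through the Koszul-cohomology duality $K_{p,n}(X,\mathcal{O}(1))\cong K_{c-p,1}(X,\omega_X;\mathcal{O}(1))^{\vee}$, but the module picture is easier to iterate.) In these terms the three bullets read, respectively: $\beta_{i,i+1}(\omega_R)=0$ for $i\ge N^{(1)}$; $\beta_{N^{(1)}-1,\,N^{(1)}}(\omega_R)=\binom{t+N^{(1)}-2}{N^{(1)}-1}$; and $\beta_{N^{(1)}-1-p,\,N^{(1)}-p}(\omega_R)=(p+1)\binom{N-\ell}{N^{(1)}-p-1}$ for all $p\ge 0$.

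\textbf{Reduction to an ideal of $R$.} Let $M=P\cdot(\omega_R)_1\subseteq\omega_R$. Since $\omega_R/M$ is generated in degrees $\ge 2$, the long exact sequence of $\operatorname{Tor}(-,k)$ yields $\beta_{i,i+1}(\omega_R)=\beta_{i,i+1}(M)$ for every $i$, so nothing is lost. Using $M_m=k^{S_{m-1}+T}$ together with $S_{m-1}+T\subseteq(m\Delta)^{\circ}\cap\ZZ^n$, one checks that $M$ is isomorphic, as a graded $R$-module, to the ideal $I:=(\bar x_b:b\in T)\subseteq R$ generated by the coordinates of the interior lattice points; the quotient $R/I=R/(\bar x_b:b\in T)$ is again a toric-type ring, the coordinate ring of the section of $X$ by the coordinate subspace $\{x_b=0:b\in T\}$. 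From $0\to I\to R\to R/I\to 0$, together with $\operatorname{Tor}_{i+1}^P(R,k)_{i+1}=0$ (as $R$ is standard graded), we get the exact sequence
$$0\to\operatorname{Tor}_{i+1}^P(R/I,k)_{i+1}\to\operatorname{Tor}_i^P(M,k)_{i+1}\to\operatorname{Tor}_i^P(R,k)_{i+1}\xrightarrow{\ \psi_i\ }\operatorname{Tor}_i^P(R/I,k)_{i+1},$$
hence $\beta_{i,i+1}(M)=\beta_{i+1,i+1}(R/I)+\dim\ker\psi_i$, where $\dim\operatorname{Tor}_i^P(R,k)_{i+1}=\kappa_{i,1}$ is the quadratic strand of $X$. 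A short computation settles the first summand: splitting $k^S=k^T\oplus k^{S\setminus T}$ gives $\bigwedge^\bullet(k^S)=\bigoplus_a\bigwedge^a(k^T)\otimes\bigwedge^{\bullet-a}(k^{S\setminus T})$, and the internal-degree-$j$ piece of the Koszul complex of $k$ against $R/I$ decomposes accordingly; since the Koszul differential of the free space $k^{S\setminus T}$ is injective wherever its source is nonzero, only the summand $a=j$ survives and $\operatorname{Tor}_j^P(R/I,k)_j=\bigwedge^j(k^T)$, i.e. $\beta_{i+1,i+1}(R/I)=\binom{N^{(1)}}{i+1}$.

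\textbf{The combinatorial core.} What remains is $\dim\ker\psi_i$, equivalently $\dim\ker\!\bigl(\bigwedge^{i}(k^S)\otimes k^T\to\bigwedge^{i-1}(k^S)\otimes k^{S+T}\bigr)$ for the Koszul differential $e_{a_1}\wedge\cdots\wedge e_{a_i}\otimes e_b\mapsto\sum_\ell(\pm 1)\,e_{a_1}\wedge\cdots\widehat{e_{a_\ell}}\cdots\wedge e_{a_i}\otimes e_{a_\ell+b}$, minus $\binom{N^{(1)}}{i+1}$. For the first bullet ($i\ge N^{(1)}$) the binomial vanishes and one must show $\psi_i$ is injective; I would do this with a generic linear functional totally ordering $\ZZ^n$ and a leading-term/triangularity argument on $\bigwedge^{i}(k^S)\otimes k^T$, the bound $i\ge N^{(1)}=\dim k^T$ being exactly the threshold past which no relation has room to close up. For the second bullet ($i=N^{(1)}-1$) one instead \emph{constructs} syzygies: each of the $t$ lattice vectors $v$ with $v+T\subseteq S$ places a copy of $T$ inside $S$ and supplies a distinguished linear syzygy of $M$, and combining the $t$ of them multiplicatively produces a family naturally indexed by the degree-$(t-1)$ monomials in $N^{(1)}$ variables, of cardinality $\binom{t+N^{(1)}-2}{N^{(1)}-1}$; the hypothesis that $\Delta^{(1)}$ is not a segment is precisely what forces this family to be independent and to exhaust the kernel, the end of the resolution of $M$ then behaving like that of the cokernel of a sufficiently general matrix of linear forms (whose last Betti number is the stated binomial). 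For the third bullet, when $\Delta^{(1)}$ is a segment this genericity fails in a controlled way: grouping $S$ by the $\ell$ lattice lines parallel to $\Delta^{(1)}$ that meet it, the Koszul complex above breaks into blocks resembling the linear strand of a rational normal scroll, from which one reads off the entire strand as $\beta_{N^{(1)}-1-p,\,N^{(1)}-p}(M)=(p+1)\binom{N-\ell}{N^{(1)}-p-1}$.

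\textbf{Expected main obstacle.} The reductions of the first two paragraphs are formal; all the work is in the last, and the hardest point is the \emph{upper} bound on $\dim\ker\psi_i$ (respectively on the full $\beta_{i,i+1}(M)$ in the segment case): producing enough syzygies is a construction out of translates of $T$, but proving there are no others means controlling the whole quadratic strand of $X$ through the comparison map $\psi_i$, and the bookkeeping is genuinely different in the two cases — an Eagon–Northcott/Buchsbaum–Rim pattern when the interior is full-dimensional enough, versus scrollar syzygies and the factor $p+1$ when the interior is a segment. Every ingredient above — Hochster's theorem, the duality for arithmetically Cohen–Macaulay rings, and the Koszul/triangularity computations — is characteristic-free, so the conclusion holds over an arbitrary field $k$.
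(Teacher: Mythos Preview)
Your reduction to the linear strand of $\omega_R$ is correct and lands on exactly the object the paper studies: since $(\omega_R)_0=0$ one reads off directly from the Koszul complex that
\[
\beta_{i,i+1}(\omega_R)=\dim\ker\Bigl(\bigwedge^i k^S\otimes k^T\xrightarrow{\ \delta\ }\bigwedge^{i-1}k^S\otimes k^{S+T}\Bigr),
\]
which is precisely the map $\delta$ of Section~3. (This makes your second paragraph a detour: the passage through $I\subseteq R$ and the comparison map $\psi_i$ only reconstitutes the identity $\beta_{i,i+1}(M)=\dim\ker\delta$ that is immediate from $M_0=0$.) The paper obtains the same identification via Koszul duality on a toric resolution; your Cohen--Macaulay/Danilov--Stanley route is an equally valid and perhaps more algebraic way to reach the same combinatorial kernel.

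The genuine gap is in your ``combinatorial core''. First, the description of the constructed syzygies has the roles of $t$ and $N^{(1)}$ reversed. In the paper's construction the \emph{variables} are the $t$ translations $P\in X=\{P:P+T\subseteq S\}$ and one builds, for each degree-$(N^{(1)}-1)$ monomial $A$ in these $t$ variables, an explicit element
\[
x_A=\tfrac{1}{\prod a_P!}\sum_{\sigma\in S_{p+1}}\mathrm{sgn}(\sigma)\,(P_1+Q_{\sigma(1)})\wedge\cdots\wedge(P_p+Q_{\sigma(p)})\otimes Q_{\sigma(p+1)}\in\ker\delta;
\]
so a single translation does not give ``a distinguished linear syzygy'' but rather a position at which one can place any of the $Q_j\in T$. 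The count $\binom{t+N^{(1)}-2}{N^{(1)}-1}$ happens to be symmetric in $t-1$ and $N^{(1)}-1$, which is why your swapped indexing still produces the right number, but the construction you describe does not match it.

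Second, and more seriously, the upper bound is not addressed. Saying that ``the end of the resolution of $M$ behaves like that of the cokernel of a sufficiently general matrix of linear forms'' is precisely the statement to be proved: the multiplication table $k^S\otimes k^T\to k^{S+T}$ is a very special matrix, and showing that its Eagon--Northcott--type syzygies $x_A$ span $\ker\delta$ when $\dim\Delta^{(1)}\ge 2$ is the main content of Section~3. The paper does this by passing to the tensor version $\bigcap_i\ker\delta_i\subseteq (k^S)^{\otimes p}\otimes k^T$, proving the key geometric lemma that for any nonzero $x$ in this intersection one has $\Delta^{(1)}-\Delta^{(1)}\subseteq\mathrm{supp}_i(x)-\mathrm{supp}_i(x)$ (this is where $\dim\Delta^{(1)}\ge 2$ enters and where the argument genuinely fails for a segment), and then running an induction on a lexicographic leading term. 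None of this is supplied by the Buchsbaum--Rim analogy. Similarly, in the one-dimensional case the paper writes down an explicit ``hypercube'' basis and proves spanning and independence by induction on $\#S$; the scrollar analogy you invoke gives the right answer but is not itself a proof here.
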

The first statement actually follows from Green's linear syzygy theorem \cite[Theorem 7.1]{eisenbud} combined with Koszul duality. The second statement already appeared for $n=2$ as a conjectural formula in \cite{bettitoricsurfaces}, where more information on Koszul cohomology of toric surfaces can be found. Recall that $\kappa_{p,q}=0$ whenever $q>n$, and note that if $\Delta^{(1)}=\emptyset$ then $\kappa_{p,q}=0$ whenever $q\geq n$ as follows from \cite[Proposition IV.5 p.\ 17-18]{heringphd}.\newline
\begin{theorem}\label{veronese}
In the context of conjecture \ref{ein} the first non-zero entry on the $n$-th row equals
$$\kappa_{\binom{d+n}{d}-\binom{d-b-1}{n}-n,n}=\binom{\binom{b+2n+1}{n}+\binom{d-b-1}{n}-2}{\binom{d-b-1}{n}-1}.$$
\end{theorem}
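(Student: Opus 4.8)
The plan is to reduce Theorem~\ref{veronese} to a computation with the linear strand of a twisted canonical module and then re-run the combinatorial argument behind the second bullet of Theorem~\ref{maintheorem}; note that $\mathcal O_{\PP^n}(b)$ on the $d$-uple Veronese is a module and not a coordinate ring, so Theorem~\ref{maintheorem} does not apply verbatim. Write $R=\bigoplus_{m\ge 0}H^0(\PP^n,\mathcal O(md))$ for the $d$-th Veronese ring, so $N:=\dim_k R_1=\binom{d+n}{n}=\binom{d+n}{d}$ and $\PP^n$ is arithmetically Cohen--Macaulay of dimension $n$ in this embedding, with dualizing sheaf $\omega=\mathcal O(-n-1)$; the numbers we want are $\kappa_{p,n}=\dim_k K_{p,n}(\PP^n,\mathcal O(b);\mathcal O(d))$, the $n$-th row of the $R$-module $P_b:=\bigoplus_{m\ge 0}H^0(\PP^n,\mathcal O(md+b))$. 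First I would apply Koszul duality on $\PP^n$ to get
$$K_{p,n}(\PP^n,\mathcal O(b);\mathcal O(d))^{\vee}\cong K_{N-n-1-p,\,1}(\PP^n,\mathcal O(-n-1-b);\mathcal O(d)),$$
so the $n$-th row of $P_b$ becomes, reversed and dualized, the linear strand of the $R$-module $M:=\bigoplus_{m\ge 1}H^0(\PP^n,\mathcal O(md-n-1-b))$, i.e.\ the section module of $\omega\otimes\mathcal O(-b)$ with respect to $\mathcal O(d)$.

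Next I would record the structure of $M$. Since $d\ge b+n+1$ we have $d-n-1-b\ge 0$, hence $H^0(\mathcal O(d-n-1-b))\neq 0$ while $H^0(\mathcal O(-n-1-b))=0$, and surjectivity of $H^0(\mathcal O(d-n-1-b))\otimes H^0(\mathcal O(md))\to H^0(\mathcal O((m+1)d-n-1-b))$ shows that $M$ is generated in degree $1$, minimally by $W:=H^0(\PP^n,\mathcal O(d-n-1-b))$, whose dimension is $\mu:=N^{(1)}:=\binom{d-b-1}{n}$. Exactly as in the proof of the first bullet of Theorem~\ref{maintheorem}, Green's linear syzygy theorem applied to $M$ bounds the length of its linear strand by $\mu-1$, so $\kappa_{p,n}=0$ for $p<N-N^{(1)}-n$, and $\kappa_{N-N^{(1)}-n,n}$ is the rank of the extremal (last) term of the linear strand of $M$, namely $\dim_k K_{\mu-1,1}(\PP^n,\mathcal O(-n-1-b);\mathcal O(d))$.

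The heart of the proof is the computation of this extremal term, and here I would copy the argument of the second bullet of Theorem~\ref{maintheorem} with the coordinate ring replaced by $M$. Concretely, $R=k[\operatorname{cone}(d\Delta_n)]$ is a normal semigroup ring, $M$ is the lattice-point module whose degree-$m$ piece is spanned by the monomials of total degree $md-n-1-b$, and $W$ corresponds to the lattice points of the simplex $\Delta_0:=(d-n-1-b)\Delta_n$ sitting inside $d\Delta_n$. Decomposing the relevant Koszul-type complex into its multigraded pieces and identifying the surviving cohomology, I expect the extremal term of the linear strand of $M$ to be canonically $\operatorname{Sym}^{\mu-1}V$, where $V$ is the $k$-span of the lattice translates of $\Delta_0$ contained in $d\Delta_n$; this would give $\kappa_{N-N^{(1)}-n,n}=\binom{\dim V+\mu-2}{\mu-1}$. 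This last identification is the main obstacle: one must check, for the module $M$ rather than for a coordinate ring, that the auxiliary higher Koszul/sheaf cohomology that could obstruct it vanishes and that the extremal piece is the full symmetric power and not a proper subspace; since all of this is characteristic-free, the result then holds over an arbitrary $k$.

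A short count finishes the proof. We have $N=\binom{d+n}{d}$ and $N^{(1)}=\#\bigl((d-n-1-b)\Delta_n\cap\ZZ^n\bigr)=\binom{(d-n-1-b)+n}{n}=\binom{d-b-1}{n}$, so the homological position is $N-N^{(1)}-n=\binom{d+n}{d}-\binom{d-b-1}{n}-n$, matching the subscript in the statement. A lattice vector $v$ satisfies $v+(d-n-1-b)\Delta_n\subseteq d\Delta_n$ precisely when $v\ge 0$ and $\sum_i v_i\le b+n+1$, i.e.\ when $v$ is a lattice point of $(b+n+1)\Delta_n$; there are $\binom{(b+n+1)+n}{n}=\binom{b+2n+1}{n}$ of these, so $\dim V=\binom{b+2n+1}{n}$. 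Substituting $\mu=\binom{d-b-1}{n}$ and $\dim V=\binom{b+2n+1}{n}$ into $\binom{\dim V+\mu-2}{\mu-1}$ yields $\binom{\binom{b+2n+1}{n}+\binom{d-b-1}{n}-2}{\binom{d-b-1}{n}-1}$, as claimed.
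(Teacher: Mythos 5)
Your approach coincides with the paper's: apply Koszul duality to move the $n$-th row of the twisted module $\bigoplus_m H^0(\mathcal{O}(md+b))$ to the linear strand of the Serre-dual module $\bigoplus_m H^0(\mathcal{O}(md-n-1-b))$, identify $T=(d-n-1-b)\Sigma\cap\ZZ^n$, $S=d\Sigma\cap\ZZ^n$, $X=(b+n+1)\Sigma\cap\ZZ^n$, and read off the dimension from the combinatorial results of Section~3. The step you flag as the ``main obstacle'' is exactly Corollary~\ref{corollary} and poses no extra difficulty for a module rather than a coordinate ring: since $H^0(\mathcal{O}(-n-1-b))=0$, the incoming Koszul differential vanishes and $K_{p,1}$ is literally $\ker\delta$, so the purely combinatorial basis of Section~3 applies verbatim and there is no residual sheaf-cohomological vanishing to verify.
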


These two theorems will be proved at the end of section 2 using results from section 3.
\begin{corollary}\label{width2}
For toric surfaces coming from polygons of lattice width two, we know the entire Betti table explicitly. $$\kappa_{p,2}=\max(p-N+N^{(1)}+3,0)\binom{N-3}{p},\hspace{5 pt}\kappa_{p,1}=\kappa_{p-1,2}+p\binom{N-1}{p+1}-2A\binom{N-3}{p-1}$$
where $A=N/2+N^{(1)}/2-1$ is the area of $\Delta$. Of course $\kappa_{0,0}=1$ and everything else is zero.
\end{corollary}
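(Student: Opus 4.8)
The plan is to derive both formulas from Theorem~\ref{maintheorem}, Ehrhart theory, and the Euler characteristic of a minimal free resolution. First I would fix the combinatorics. If $\Delta$ has lattice width two, pick a primitive functional $u$ realizing the width, normalized so that $\langle u,-\rangle$ takes exactly the values in $[0,2]$ on $\Delta$. A lattice point of $\Delta$ is interior precisely when $\langle u,-\rangle=1$, so $\Delta^{(1)}$ is contained in the lattice line $\{\langle u,-\rangle=1\}$ and is one-dimensional; the degenerate case of a single interior point yields the same formulas and needs no separate argument. By convexity $\Delta$ meets each of the three parallel lattice lines $\{\langle u,-\rangle=0\}$, $\{\langle u,-\rangle=1\}$, $\{\langle u,-\rangle=2\}$ and no other line parallel to $\Delta^{(1)}$, so we are in the situation of Theorem~\ref{maintheorem} with $n=2$ and $\ell=3$. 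Its third bullet then reads $\kappa_{p+N-N^{(1)}-2,\,2}=(p+1)\binom{N-3}{N^{(1)}-p-1}$ for all $p\ge 0$; reindexing by $q=p+N-N^{(1)}-2$ and using $\binom{N-3}{N^{(1)}-p-1}=\binom{N-3}{q}$ gives $\kappa_{q,2}=(q-N+N^{(1)}+3)\binom{N-3}{q}$ for $q\ge N-N^{(1)}-2$, while the first bullet gives $\kappa_{q,2}=0$ for $q<N-N^{(1)}-2$. Since $\binom{N-3}{q}=0$ for $q<0$, these two facts combine into $\kappa_{p,2}=\max(p-N+N^{(1)}+3,0)\binom{N-3}{p}$ for all $p$.

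For the first row I would use the Hilbert series of the homogeneous coordinate ring $M$ of $X$, a quotient of $R=k[x_1,\dots,x_N]$. As $\Delta$ is normal, $\dim_k M_m=\#(m\Delta\cap\ZZ^n)=L_\Delta(m)$, and Ehrhart's theorem combined with Pick's formula gives $L_\Delta(m)=Am^2+\tfrac{N-N^{(1)}}{2}m+1$ with $A=\tfrac{N+N^{(1)}}{2}-1$. Summing the geometric series,
$$H_M(t)=\sum_{m\ge 0}L_\Delta(m)\,t^m=\frac{1+(N-3)t+N^{(1)}t^2}{(1-t)^3}.$$
From a minimal graded free resolution $0\to F_c\to\cdots\to F_0\to M\to 0$ with $F_p=\bigoplus_q R(-p-q)^{\kappa_{p,q}}$ one has $H_M(t)(1-t)^N=\sum_{p,q}(-1)^p\kappa_{p,q}\,t^{p+q}$, and therefore
$$\sum_{p,q}(-1)^p\kappa_{p,q}\,t^{p+q}=\bigl(1+(N-3)t+N^{(1)}t^2\bigr)(1-t)^{N-3}.$$

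Finally I would extract $\kappa_{p,1}$ by comparing coefficients of $t^{p+1}$ for $p\ge 1$. Since $X$ is a surface $\kappa_{p,q}=0$ for $q\ge 3$, and $\kappa_{p,0}=0$ for $p\ge1$, so the coefficient on the left is $(-1)^p(\kappa_{p,1}-\kappa_{p-1,2})$; equating it with the coefficient on the right gives
$$\kappa_{p,1}=\kappa_{p-1,2}-\binom{N-3}{p+1}+(N-3)\binom{N-3}{p}-N^{(1)}\binom{N-3}{p-1}.$$
Substituting $2A=N+N^{(1)}-2$, the asserted equality $\kappa_{p,1}=\kappa_{p-1,2}+p\binom{N-1}{p+1}-2A\binom{N-3}{p-1}$ becomes the binomial identity $p\binom{N-1}{p+1}=-\binom{N-3}{p+1}+(N-3)\binom{N-3}{p}+(N-2)\binom{N-3}{p-1}$, which follows from writing $\binom{N-1}{p+1}=\binom{N-3}{p+1}+2\binom{N-3}{p}+\binom{N-3}{p-1}$ by Pascal's rule and then applying the absorption identity $k\binom{m}{k}=(m-k+1)\binom{m}{k-1}$ twice. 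The coefficient of $t$ gives $\kappa_{0,1}=0$, consistent with the formula, and together with $\kappa_{0,0}=1$ and the vanishing of all other entries ($q\ge 3$, or $q=0$ and $p\ge1$) this pins down the entire table. The only new input here is Theorem~\ref{maintheorem}; I expect the sole delicate point to be the geometric observation that lattice width two forces $\ell=3$, after which the argument is routine Ehrhart theory, the Euler characteristic of the resolution, and one binomial identity.
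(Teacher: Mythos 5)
Your proof is correct and follows the same route the paper sketches: the first formula is read off from Theorem~\ref{maintheorem} once you observe $\ell=3$, and the second comes from the Euler characteristic of the resolution (which is exactly what the cited \cite[lemma 1.3]{bettitoricsurfaces} packages). Your Hilbert-series computation, the reindexing $q=p+N-N^{(1)}-2$, the use of $\binom{N-3}{k}=\binom{N-3}{N-3-k}$, and the Pascal/absorption verification of the binomial identity all check out.

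Two small imprecisions worth tightening. First, ``a lattice point of $\Delta$ is interior precisely when $\langle u,-\rangle=1$'' is false as stated: a lattice point on the middle line can lie on the boundary of $\Delta$ (e.g.\ $(1,1)$ in $\mathrm{conv}\{(0,0),(2,0),(0,2)\}$). What you actually need, and what is true, is only the implication ``interior $\Rightarrow$ $\langle u,-\rangle=1$'', which already places $\Delta^{(1)}$ inside that line. Second, you dismiss the degenerate cases a bit quickly. When $N^{(1)}=1$ the second bullet of Theorem~\ref{maintheorem} only produces the single value $\kappa_{N-3,2}=\binom{t-1}{0}=1$, and the first bullet gives vanishing for $p<N-3$; to get $\kappa_{p,2}=0$ for $p>N-3$ you must additionally invoke the duality $\kappa_{p,2}=\kappa_{N-3-p,1}(X',K,L)$ and the fact that Koszul cohomology vanishes in negative homological degree. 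When $N^{(1)}=0$ the third bullet is again unavailable; there the vanishing $\kappa_{p,2}=0$ follows from the remark in the introduction (citing Hering) that $\kappa_{p,q}=0$ for $q\ge n$ when $\Delta^{(1)}=\emptyset$, and this agrees with the closed formula. Neither point threatens the argument, but since the corollary claims the entire table, it is worth saying a sentence about each.
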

The second formula comes from \cite[lemma 1.3]{bettitoricsurfaces}, the first follows directly from our theorem \ref{maintheorem}.\\

Using \cite[Theorem 1.3]{betticurves} one can deduce the following formula for the graded Betti table of the canonical model of a tetragonal curve in a toric surface:
$$\kappa_{g-p-2,1}=\kappa_{p,2}=(g-p-2)\binom{g-3}{p-2}+\sum_{i=1}^2\max(p-b_i-1,0)\binom{g-3}{p},$$
where $b_1,b_2$ are the tetragonal invariants introduced by Schreyer in \cite[p.\ 127]{schreyer}, and $g$ is the genus. Actually this formula is true for all tetragonal curves as follows from the explicit minimal free graded resolution in Schreyer's article. We include this explicit formula because it is not easy to find in the literature.\\

In section 2 we explain toric varieties, syzygies, Koszul cohomology and we prove these theorems using results from section 3. We use Koszul duality \cite[p.\ 21]{nagelaprodu} which expresses Betti numbers on the $n$-th row in terms of Betti numbers on the first row of the Betti table of the Serre dual line bundle.\\

The core of the article is section 3 where we construct an explicit basis for the last non-zero entry on the first row of the graded Betti table of any graded module of the form $\bigoplus_{q\geq 0}H^0(qL+M)$ for line bunldes $L,M$ with $H^0(M)=0$, $H^0(L)\neq 0$, $H^0(L+M)\neq 0$ on any normal projective toric variety. This comes down to constructing a basis of the kernel of the map
$$\bigwedge^pH^0(L)\otimes H^0(L+M)\rightarrow\bigwedge^{p-1}H^0(L)\otimes H^0(2L+M)$$
with $p=\dim H^0(L+M)-1$. Theorems \ref{maintheorem} and \ref{veronese} can then be proved from results from section 3, namely theorem \ref{Tleqp} (which actually also follows from Green's linear syzygy theorem \cite[Theorem 7.1]{eisenbud}), corollary \ref{corollary} and theorem \ref{1dim}.
\subsubsection*{Acknowledgements}
This article is part of my Ph.D thesis which is funded by the Research Foundation Flanders (FWO). It was my colleagues Wouter Castryck and Filip Cools who noticed patterns in Betti tables of certain toric surfaces, which motivated me to find an explicit basis. I am also grateful to the referee for carefully reading my article and making many useful suggestions. I also want to thank Milena Hering for bringing the article \cite{einlazarsfeld} to my attention.
\section{Toric varieties and graded Betti tables}
\subsection*{Projectively normal toric varieties}
We work over an arbitrary field $k$. By lattice points we mean points of $\ZZ^n$. Projective toric varieties are built out of polytopes $\Delta\subseteq\RR^n$ that are the convex hull of a finite set of lattice points. This works as follows. Suppose $\Delta$ is $n$-dimensional and let $P_1,\ldots,P_{N_\Delta}$ be a list of all lattice points of $\Delta$, we define an embedding
$$
\phi_\Delta:(\mathbb{A}^1\backslash\{0\})^n\rightarrow\mathbb{P}^{N_\Delta-1}:
(\lambda_1,\ldots,\lambda_n)\mapsto\Big(\prod_{i=1}^n\lambda_i^{P_{1,i}}:\ldots:\prod_{i=1}^n\lambda_i^{P_{N_\Delta,i}}\Big),
$$
where $P_{j,i}$ is the $i$-th coordinate of $P_j$. Let $X_\Delta$ be the closure of the image of $\phi_\Delta$.
If it happens that $a\Delta\cap\ZZ^n+b\Delta\cap\ZZ^n=(a+b)\Delta\cap\ZZ^n$ for all positive integers $a,b$, then the polytope is called normal. In this case the projective toric variety corresponding to $\Delta$ is just $X_\Delta$ and it is projectively normal.\vspace{4 pt}\newline
\textbf{Example.}\newline
The $d$-fold Veronese embedding of projective space is given by a polytope of the following form:
$$\Delta=\{(x_1,\ldots,x_n)\in(\RR_{\geq 0})^n|x_1+\ldots+x_n\leq d\}$$
This gives the Veronese embedding of $\mathbb{P}^n$ into $\mathbb{P}^{N-1}$ where $N=\#\Delta\cap\ZZ^n=\binom{n+d}{d}$.
For instance if $n=2$ and $d=2$ we get the embedding
$$\mathbb{A}^1\backslash\{0\}\times\mathbb{A}^1\backslash\{0\}\rightarrow\mathbb{P}^5:\hspace{5 pt}(x,y)\mapsto(x^2:xy:x:y^2:y:1).$$
The monomials $x^2$, $xy$, $x$, $y^2$, $y$, $1$ correspond to the lattice points of the triangle $\Delta$ with vertices $(2,0),(0,2),(0,0)$.
\begin{center}
\begin{tikzpicture}
\draw [thick] (0,0) -- (2,0) -- (0,2) -- (0,0);
\fill[black] (0,0) circle (2 pt);
\fill[black] (1,0) circle (2 pt);
\fill[black] (2,0) circle (2 pt);
\fill[black] (0,1) circle (2 pt);
\fill[black] (0,2) circle (2 pt);
\fill[black] (1,1) circle (2 pt);
\node at (0.25,0.2) {\small $1$};
\node at (2.25,0.2) {\small $x^2$};
\node at (0.25,2.2) {\small $y^2$};
\node at (1.25,0.2) {\small $x$};
\node at (1.25,1.2) {\small $xy$};
\node at (0.25,1.2) {\small $y$};
\end{tikzpicture}
\end{center}
When taking the Zariski closure of the image this corresponds to the standard Veronese embedding
$$\mathbb{P}^2\rightarrow\mathbb{P}^5:\hspace{5 pt}(x:y:z)\mapsto(x^2:xy:xz:y^2:yz:z^2).$$
If $\Delta$ is not normal then one can still take integer multiples $q\Delta$, $q\geq 1$, which will be normal for sufficiently large $q$. One can then associate to $\Delta$ the toric variety $X_{q\Delta}$ where $q$ is large enough so that $q\Delta$ is normal. This variety does not depend on $q$ (but its embedding does). However for simplicity we will restrict to the case when $\Delta$ is normal.
The homogeneous coordinate ring of $X_\Delta$ is given by
$$\bigoplus_{q\geq 0}V_{q\Delta}=\bigoplus_{q\geq 0}H^0(X,qL),$$
where by $V_{q\Delta}$ we mean the vector space spanned by the monomials (possibly with negative exponents) $x_1^{i_1}\ldots x_n^{i_n}$ corresponding to lattice points $(i_1,\ldots,i_n)\in q\Delta$. By $L$ we mean the very ample line bundle coming from the embedding into projective space.
\subsection*{Graded Betti tables}
Given any projective variety $X$ with homogeneous coordinate ring $R=\bigoplus_{q\geq 0}$ $H^0(X,qL)$ we can consider the graded Tor modules $$\text{Tor}_{S^*H^0(X,L)}^i(R,k).$$ Note that $R$ is a graded module over the symmetric algebra $S^*H^0(X,L)$. These graded Tor modules can be computed either by taking a graded free resolution of $R$ (syzygies) or by taking a graded free resolution of $k$ (Koszul cohomology). We will mainly work with the latter. The graded Betti table is a table of non-negative integers $\kappa_{p,q}$, in the $p$-th column and the $q$-th row, where $p,q\geq 0$. They are defined as the dimension over $k$ of the degree $p+q$ part of $\text{Tor}_{S^*H^0(X,L)}^p(R,k)$. In general the table has the following shape:
\begin{equation}
\begin{array}{r|cccccc}
  & 0 & 1 & 2 & 3 & 4 & \ldots \\
\hline
0 & 1 & 0 & 0 & 0 & 0 & \dots  \\
1 & 0 & \kappa_{1,1} & \kappa_{2,1} & \kappa_{3,1} & \kappa_{4,1} & \dots  \\
2 & 0 & \kappa_{1,2} & \kappa_{2,2} & \kappa_{3,2} & \kappa_{4,2} & \dots  \\
3 & 0 & \kappa_{1,3} & \kappa_{2,3} & \kappa_{3,3} & \kappa_{4,3} & \dots  \\
\vdots & \vdots & \vdots & \vdots & \vdots & \vdots &  \\
\end{array}\nonumber
\end{equation}
\textbf{Example.}\newline
When $\Delta$ is the convex hull of $(2,0),(0,2),(0,0)$ we have the minimal graded free resolution of $R$:
$$0\longrightarrow F_3\overset{d_3}{\longrightarrow}F_2\overset{d_2}{\longrightarrow} F_1\overset{d_1}{\longrightarrow} S^*V_{\Delta\cap\ZZ^2}\overset{d_0}{\longrightarrow} R.$$
Here $V_{\Delta\cap\ZZ^2}$ is the vector space spanned by the monomials $x^2,xy,x,y^2,y,1$ corresponding to the lattice points of $\Delta$. The symmetric algebra $S^*V_{\Delta\cap\ZZ^2}$ is the polynomial ring in 6 variables $x_{(2,0)},x_{(1,1)},x_{(1,0)},x_{(0,2)},x_{(0,1)},x_{(0,0)}$ and is the homogeneous coordinate ring of $\mathbb{P}^5$. The image of $d_0$ corresponds to the ideal cutting out the Veronese surface. This ideal is generated by six elements:
\begin{align}
&x_{(2,0)}x_{(0,2)}-x_{(1,1)}^2,\hspace{25 pt}x_{(2,0)}x_{(0,0)}-x_{(1,0)}^2,\hspace{25 pt}x_{(0,2)}x_{(0,0)}-x_{(0,1)}^2,\nonumber \\ &x_{(2,0)}x_{(0,1)}-x_{(1,0)}x_{(1,1)}\text{, }x_{(1,0)}x_{(0,2)}-x_{(0,1)}x_{(1,1)}\text{, }x_{(1,1)}x_{(0,0)}-x_{(1,0)}x_{(0,1)}.\nonumber
\end{align}
These constitute a minimal set of generators of the ideal. So $F_1$ is a free graded module of rank six over the polynomial ring $S^*V_{\Delta\cap\ZZ^2}$ where the basis elements all have degree two and are mapped by $d_0$ to the generators of the ideal. This makes sure that $d_0$ is a degree-preserving morphism of modules. This means that $\kappa_{1,1}=6$.\newline
The image of $d_1$ consists of the relations between these generators, called syzygies. It turns out that there is a minimal generating set of eight syzygies of degree 3, so that $F_2$ is a rank 8 graded free module where the basis elements have degree 3. So $\kappa_{2,1}=8$. It turns out that $F_3$ has rank 3 where the basis elements have degree 4. This gives the graded Betti table:
\begin{equation}
\begin{array}{r|cccccc}
  & 0 & 1 & 2 & 3 & 4 & \ldots \\
\hline
0 & 1 & 0 & 0 & 0 & 0 & \dots  \\
1 & 0 & 6 & 8 & 3 & 0 & \dots  \\
2 & 0 & 0 & 0 & 0 & 0 & \dots  \\
\vdots & \vdots & \vdots & \vdots & \vdots & \vdots &  \\
\end{array}\nonumber
\end{equation}
Note that $\kappa_{0,0}=1$ because the polynomial ring $S^*V_{\Delta\cap\ZZ^2}$ is a free module of rank one over itself with the monomial 1 as a generator.
\subsection*{Koszul cohomology}
Let $L,N$ be line bundles on a complete variety $X$. Let $S^*V=S^*H^0(X,L)$ be the symmetric algebra over $H^0(X,L)$, then $\bigoplus_{q\geq 0}H^0(X,qL+N)$ is a graded module over $S^*V$. We define the Koszul cohomology space $K_{p,q}(X,N,L)$ as the homology of the following sequence:
\begin{align}
\bigwedge^{p+1}V\otimes H^0(X,(q-1)L+N)&\overset{\delta_{p+1}}{\longrightarrow}\bigwedge^pV\otimes H^0(X,qL+N)\nonumber \\
&\overset{\delta_p}{\longrightarrow}\bigwedge^{p-1}V\otimes H^0(X,(q+1)L+N)\nonumber
\end{align}
where $\delta_p(v_1\wedge\ldots\wedge v_p\otimes w)=\sum_{i=1}^p(-1)^iv_1\wedge\ldots\wedge\hat{v_i}\wedge\ldots\wedge v_p\otimes(v_iw)$. The $\hat{v_i}$ indicates that $v_i$ is removed from the wedge product. When $N=0$ we write $K_{p,q}(X,L)=K_{p,q}(X,0,L)$. We denote the dimension of $K_{p,q}(X,N,L)$ (resp. $X_{p,q}(X,L)$) by $\kappa_{p,q}(X,N,L)$ (resp. $\kappa_{p,q}(X,L)$). If $L$ is the very ample line bundle coming from a projective embedding this agrees with our earlier definition of $\kappa_{p,q}$ using syzygies.\vspace{4 pt}\newline
\textbf{Example.}\newline
For our Veronese example with $n=d=2$ we will construct an explicit element of the cohomology space $K_{2,1}(X,L)$:
$$y^2\wedge xy\otimes xz-y^2\wedge yz\otimes x^2+xy\wedge yz\otimes yx\in\bigwedge^2 V_\Delta\otimes V_\Delta,$$
which is in the kernel of $\delta_2$, so it defines an element of $K_{2,1}(X,L)$.\\

We now turn to the proof of theorem 1 and corollary 2. To any $n$-dimensional convex lattice polytope $\Delta\subseteq\RR^n$ one can associate the inner normal fan $\Sigma$ \cite[p.\ 321]{coxlittleschenck} whose rays $\rho$ are in one-to-one correspondence with the facets of $\Delta$ and the torus-invariant prime divisors $D_\rho$.
In general for any torus-invariant divisor $D=\sum_{\rho}a_\rho D_\rho$ the vector space $H^0(X,D)$ has a basis that naturally corresponds to $\{P\in\ZZ^n|\forall\rho\in\Sigma(1):\langle P,\rho\rangle\geq-a_\rho\}$ where $\Sigma(1)$ is the set of rays of the fan $\Sigma$.
Multiplication of these global sections corresponds to coordinatewise addition of lattice points. The divisor whose global sections correspond to $\Delta$ gives the very ample line bundle of our embedding into projective space. Note that in this setting nothing changes when extending the field $k$. In the next proof we assume $k$ algebraically closed. We also use in the following proof that taking the pull-back of a line bundle through a birational morphism of projective normal varieties preserves global sections. We will also use the fact that adding $\sum_{\rho\in\Sigma(1)}-D_\rho$ to a divisor $\sum_{\rho}a_\rho D_\rho$ amounts to taking the interior of the corresponding polytope $(*)$.
\begin{proof}[Proof of theorem \ref{maintheorem} using results from the next section]
This will rely on Koszul duality which requires smoothness, so let $X'\rightarrow X$ be a toric resolution of singularities as in \cite[p.\ 515-519]{coxlittleschenck} and let $K=\sum_{\rho\in\Sigma'(1)}-D_\rho$ be the canonical divisor of $X'$. Let $L$ be the line bundle on $X$ coming from its projective embedding. The pull-back of $L$ to $X'$ (which we also denote by $L$) is globally generated on $X'$ and hence nef. By Demazure vanishing \cite[p.\ 410]{coxlittleschenck} $H^i(X',qL)=0$, $\forall i,q\geq 1$. By Koszul duality \cite[p.\ 21]{nagelaprodu} we have
$$\kappa_{N-1-n-p,n}(X,L)=\kappa_{N-1-n-p,n}(X',L)=\kappa_{p,1}(X',K,L),\hspace{5 pt}\forall p\geq 0.$$
The first equality follows because $H^0(X,qL)=H^0(X',qL)$, $\forall q\geq 0$ as taking the pull-back of $L$ through $X'\rightarrow X$ preserves global sections. We claim that $\kappa_{p,1}(X',K,L)$ is the dimension of the kernel of the following map:
$$\delta:\bigwedge^{p}V_S\otimes V_T\rightarrow\bigwedge^{p-1}V_S\otimes V_{(2\Delta)^{(1)}\cap\ZZ^n}$$
where by $V_S$ (resp. $V_T$) we mean a vector space with $S$ (resp. $T$) as a basis. This is because $H^0(X',K)=0$ and $H^0(X',L+K)$ corresponds to $T=\Delta^{(1)}\cap\ZZ^n$ which we know by $(*)$. Note that the image of $\delta$ is contained in $\bigwedge^{p-1}V_S\otimes V_{S+T}$. Now all the results of the theorem follow from theorem \ref{Tleqp}, corollary \ref{corollary} and theorem \ref{1dim}, except when $p=0$ and $N^{(1)}=1$, but then $\delta=0$ and the result is easy. For the case where $\Delta^{(1)}$ is one-dimensional, note that $\ell$ (as in theorem \ref{maintheorem}) equals $N-\#X$, with $X$ as in theorem \ref{1dim}.
\end{proof}
By the same duality as in the proof of theorem \ref{maintheorem} it follows that $\kappa_{p,q}=0$ whenever $q>n$.
\begin{proof}[Proof of theorem \ref{veronese}.]
Denote by $\kappa_{p,q}(b;d)$ the dimension of the Kuszul cohomology of the graded module
$$\bigoplus_{i\geq 0}H^0(\mathcal{O}_{\PP^n}(b+id)),$$
Let $N=\binom{n+d}{n}$ be the number of lattice points in $d\Sigma$ where
$$\Sigma=\{(x_1,\ldots,x_n)\in\RR_{\geq 0}|x_1+\ldots+x_n\leq d\}$$
is the standard simplex of dimension $n$.
As in the proof of theorem \ref{maintheorem} we have duality:
$$\kappa_{p,n}(b;d)=\kappa_{N-n-p-1,1}(-b-n-1;d).$$
Strictly speaking, we cannot apply theorem \cite[p.\ 21]{nagelaprodu}, but the proof obviously generalizes and the vanishing condition will be satisfied because $H^1,\ldots,H^{n-1}$ of any line bundle on $\PP^n$ will vanish. This $\kappa_{N-n-p-1,1}(-b-n-1;d)$ is the dimension of the kernel of
$$\bigwedge^{N-n-p-1}S\otimes T\rightarrow\bigwedge^{N-n-p-2}S\otimes S+T,$$
where $T=(d-b-n-1)\Sigma\cap\ZZ^n$ and $S=d\Sigma\cap\ZZ^n$ so that $X=(b+n+1)\Sigma\cap\ZZ^n$.
Applying theorem \ref{Tleqp} we find that $\kappa_{p,n}(b;d)=0$ whenever $N-n-p-1\geq\# T=\binom{d-b-1}{n}$, or equivalently $p<\binom{n+d}{n}-\binom{d-b-1}{n}-n$. The formula for $\kappa_{p,n}(b;d)$ when $p=\binom{n+d}{n}-\binom{d-b-1}{n}-n$ follows from corollary \ref{corollary}. 
\end{proof}
\section{Combinatorial proof}
In this section we do not require our polytopes to be normal. From now on instead of working with spaces of monomials $V_S$, $V_T$ etcetera, we replace monomials $x_1^{i_1}\ldots x_n^{i_n}$ by the corresponding lattice point $(i_1,\ldots,i_n)$.
Let $S$ and $T\subseteq\ZZ^n$ with $T$ finite, let $p\geq 0$. We abusively write $S$ (resp. $T$) for a vector space with $S$ (resp. $T$) as a basis. We are interested in the kernel of the following map:
$$\delta:\bigwedge^pS\otimes T\rightarrow\bigwedge^{p-1}S\otimes(S+T):$$ $$P_1\wedge\ldots\wedge P_p\otimes Q\longmapsto\sum_{i=1}^p(-1)^iP_1\wedge\ldots\wedge\hat{P_i}\wedge\ldots\wedge P_p\otimes (Q+P_i),$$
where $\bigwedge^{-1}$ of a vector space is zero. By $Q+P_i$ we mean coordinate-wise addition in $\ZZ^n$.
\begin{definition}\label{defsupp}
Let $S$ and $T$ be finite subsets of $\ZZ^n$ and $p\geq 0$ an integer. If $x\in\bigwedge^pS\otimes T$ then $x$ can be uniquely written (up to order) as a linear combination (with non-zero coefficients) of expressions of the form $P_1\wedge\ldots\wedge P_p\otimes Q$ with $P_1,\ldots,P_p\in S$, $Q\in T$ and $P_1>\ldots>P_p$ for some total order on $S$. We define the \emph{support} of $x$, denoted $supp(x)$, as the convex hull of the set of points $P_i$ occurring in the wedge part of the terms of $x$.
\end{definition}
\begin{definition}
A \emph{lattice pre-order} on $\ZZ^n$ is a reflexive transitive relation $\leq$ on $\ZZ^n$ such that $\forall P_1,P_2\in\ZZ^n$: $P_1\leq P_2$ or $P_2\leq P_1$ and $\forall P_1,P_2,P_3\in\ZZ^n$: if $P_1\leq P_2$ then $P_1+P_3\leq P_2+P_3$. We call $\leq$ a \emph{lattice order} if it is anti-symmetric.
\end{definition}
One way to obtain a lattice pre-order is to take a linear map $L:\RR^n\rightarrow\RR$ and set $P_1\leq P_2$ if $L(P_1)\leq L(P_2)$. If the coefficients defining $L$ are linearly independent over $\QQ$ then it defines a lattice order. We write $P_1<P_2$ if $P_1\leq P_2$ and not $P_2\leq P_1$, and we write $P_1\sim P_2$ if $P_1\leq P_2$ and $P_2\leq P_1$.
In the proof of the following lemma we will use the property that for any points $P,P_M,Q,Q'\in\ZZ^n$ and any pre-order on $\ZZ^n$, if $P\leq P_M$ and $Q\leq Q'$ then either $P+Q<P_M+Q'$ or $P_M\sim P$ and $Q'\sim Q$.
\begin{lemma}\label{Pwedge}
Let $S$ and $T$ be finite subsets of $\ZZ^n$, $p\geq 1$. Let $x\in\ker\delta$ be non-zero with $\delta$ as above. Let $\leq$ be a lattice pre-order such that $supp(x)$ has a unique maximum $P_M$. Let $S'=supp(x)\backslash\{P_M\}$ and $T'$ the set of non-maximal points $Q\in T$. Finally, let $\delta':\bigwedge^{p-1}S'\otimes T'\rightarrow\bigwedge^{p-2}S'\otimes(S'+T')$ be defined analogously to $\delta$. Then there exists a non-zero $y\in\ker\delta'$ such that
$$x=P_M\wedge y+\text{ terms not containing }P_M\text{ in the }\wedge\text{ part.}$$
\end{lemma}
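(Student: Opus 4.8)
The plan is to decompose $x$ according to whether or not the wedge part of a term contains the maximal point $P_M$. Since every term $P_1\wedge\ldots\wedge P_p\otimes Q$ of $x$ has all $P_i\in supp(x)$, and $P_M$ is the unique maximum of $supp(x)$, we can write
$$x = P_M\wedge y + z,$$
where $y\in\bigwedge^{p-1}S'\otimes T$ (with $S'=supp(x)\setminus\{P_M\}$) collects, with sign, the terms that do contain $P_M$, and $z\in\bigwedge^{p}S'\otimes T$ collects those that do not. First I would argue $y\neq 0$: if $y=0$ then $x=z$ has support contained in $S'$, contradicting that $P_M\in supp(x)$ — so some term of $x$ genuinely involves $P_M$. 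Then I would show $y$ actually lies in $\bigwedge^{p-1}S'\otimes T'$, i.e. the $T$-part of every term of $y$ is a non-maximal point of $T$; this is where the chosen pre-order does the work, and I expect it to be a short argument using maximality of $P_M$ together with the "absorbing" property stated just before the lemma.

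The heart of the proof is to show $y\in\ker\delta'$. Apply $\delta$ to the decomposition. The key is to track, inside $\delta(x)=0$, the component lying in $\bigwedge^{p-1}S'\otimes\big((S'+T)\cup\{P_M+Q: Q\in T\}\big)$ — more precisely the part whose second tensor factor is of the form $P_M+Q$ with $Q$ a non-maximal element of $T$. By the absorbing property (if $P\le P_M$ and $Q\le Q'$ then $P+Q<P_M+Q'$ unless $P\sim P_M$ and $Q\sim Q'$), a contraction $Q'+P_i$ coming from a term of $z$ can only equal $P_M+Q$ with $Q$ non-maximal if $P_i\sim P_M$; but the pre-order need not be a genuine order, so I would first reduce to the case where $P_M$ is the \emph{strict} maximum of $supp(x)$ in the relevant sense, or handle the equivalence class of $P_M$ carefully. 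Granting that, the only contributions of $\delta(x)$ landing in the "$P_M+T'$ slot" come from contracting the explicit $P_M$ out of the $P_M\wedge y$ part, and these contributions are exactly $\pm P_M\wedge\delta'(y)$ plus the contractions of the \emph{other} (non-$P_M$) factors of $P_M\wedge y$, which land in $\bigwedge^{p-2}S'\otimes(P_M+S'+T')$ and hence in a different, disjoint slot. Equating the $P_M+T'$-component of $\delta(x)=0$ to zero forces $\delta'(y)=0$.

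The main obstacle, as flagged above, is the possibility that $\le$ is only a pre-order, so that $P_M$ may be $\sim$-equivalent to other lattice points (possibly other points of $supp(x)$ or of $T$), which muddies the claim that no term of $z$ can contribute to the $P_M+T'$ slot and the claim that $y$ is supported on $T'$. I would deal with this by perturbing: either replace $\le$ by a genuine lattice order refining it on the finite set $supp(x)\cup T\cup(S+T)$ (possible since these are finite sets, so a generic linear functional separates them while keeping $P_M$ maximal), or carry the equivalence class of $P_M$ along explicitly and note that all the "diagonal" terms coming from $P\sim P_M$, $Q\sim Q'$ either cancel in pairs or are absorbed, since $x\in\ker\delta$ already. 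Once the slot-by-slot bookkeeping is set up cleanly, extracting $\delta'(y)=0$ from $\delta(x)=0$ is a routine sign chase, so I would not belabor it.
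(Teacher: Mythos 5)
Your opening decomposition $x=P_M\wedge y+z$ and definition of $y$ match the paper's proof exactly, and your observation that $y\neq 0$ because $P_M\in supp(x)$ is fine. The genuine gap is in the argument that $\delta'(y)=0$, which you call the heart of the proof. You propose to isolate the component of $\delta(x)$ whose second tensor factor lies in $P_M+T'$ (with wedge part in $\bigwedge^{p-1}S'$), and you claim contractions coming from $z$ cannot land there by the absorbing property unless $P_i\sim P_M$. This is false: a term $P_1\wedge\ldots\wedge P_p\otimes Q'$ of $z$ with $P_i<P_M$ and $Q'>Q$ can perfectly well produce a contraction $\otimes(P_i+Q')=\otimes(P_M+Q)$ with $Q\in T'$ — the absorbing property says nothing in that direction, since it only constrains $P+Q$ when the corresponding $Q\leq Q'$, not when $Q'>Q$. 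Moreover, even if that slot did receive contributions only from contracting $P_M$ out of $P_M\wedge y$, those contributions are $\sum_i\lambda_i P_{2,i}\wedge\ldots\wedge P_{p,i}\otimes(P_M+Q_i)$, i.e.\ essentially $y$ with $P_M$ added to the last factor, \emph{not} $\pm P_M\wedge\delta'(y)$ — setting it to zero would force $y=0$, which is the wrong conclusion. You have conflated the contraction-of-$P_M$ terms (no $P_M$ in the wedge, second factor in $P_M+T$) with the contraction-of-other-factors terms ($P_M$ in the wedge, second factor in $S'+T'$); these live in disjoint slots but you attribute the wrong one to $\delta'(y)$.

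The paper's argument for $\delta'(y)=0$ is in fact much simpler and order-free: look at the terms of $\delta(x)$ that still have $P_M$ in the wedge part. Since every wedge factor of $z$ lies in $S'$, $\delta(z)$ contributes nothing there, and contracting $P_M$ out of $P_M\wedge y$ removes $P_M$ from the wedge — so the $P_M$-in-wedge component of $\delta(x)$ is exactly $-P_M\wedge\delta'(y)$, and $\delta(x)=0$ forces $\delta'(y)=0$. The absorbing-property slot argument you sketch (second factor $=P_M+Q$ with $Q$ maximal, where contributions from $z$ are genuinely ruled out by the unique maximality of $P_M$) is precisely what the paper uses for the step you hand-wave, namely that the $Q$'s appearing in $y$ all lie in $T'$. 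Finally, the perturbation you suggest at the end is problematic: refining $\leq$ to a total order on the relevant finite set generally shrinks the maximal set of $T$ and thus enlarges $T'$, so proving the statement with the refined order does not give back the statement for the original pre-order; the paper avoids this by exploiting only that $P_M$ is the \emph{unique} maximum of $supp(x)$, which already forces $P\sim P_M\Rightarrow P=P_M$ for $P\in supp(x)$, without needing to perturb $T$.
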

\begin{proof}
Write
$$x=\sum_i\lambda_iP_M\wedge P_{2,i}\wedge\ldots\wedge P_{p,i}\otimes Q_i+\text{ terms not containing }P_M\text{ in the }\wedge\text{ part,}$$
without any redundant terms. Then define
$$y=\sum_i\lambda_iP_{2,i}\wedge\ldots\wedge P_{p,i}\otimes Q_i.$$
All we have to prove now is that $y\in\ker(\delta')$. It is clear that $supp(y)\cap\ZZ^n\subseteq S'$. Let us prove that every $Q_i$ in this expression is in $T'$. We know that $Q_i\in T$. If it is not in $T'$ then it is maximal. But then, applying $\delta$, the term $-P_{2,i}\wedge\ldots\wedge P_{p,i}\otimes(P_M+Q_i)$ of $\delta(x)$ has nothing to cancel against, contradicting the fact that $\delta(x)=0$. The reason that it has nothing to cancel against is that all terms in $\delta(x)$ end with $\otimes(P+Q)$ with $P\leq P_M$ and $Q\leq Q_i$ so that $P+Q<P_M+Q_i$ unless $P\sim P_M$, and $Q\sim Q_i$. As $P_M$ is the unique maximum of $supp(x)$ $P\sim P_M$ implies $P=P_M$. But if $Q\neq Q_i$ and $P=P_M$ then we still have $P+Q\neq P_M+Q_i$, so that we have only one term of $\delta(x)$ ending on $\otimes(P_M+Q_i)$, which has nothing to cancel against.\newline
So $y$ is in the domain of $\delta'$. We now prove that $\delta'(y)=0$:
$$0=\delta(x)=-P_M\wedge\delta'(y)+\text{ terms not containing }P_M\text{ in the }\wedge\text{ part}.$$
Because terms of $P_M\wedge\delta'(y)$ cannot cancel against terms without $P_M$ in the $\wedge$ part, $\delta'(y)$ must be zero.
\end{proof}
\noindent\textbf{Example.}\newline
Our Veronese example of $n=d=2$ becomes $S=T=\{(2,0),(1,1),(1,0),$ $(0,2),(0,1),(0,0)\}$ and in the new notation the explicit cochain in $\bigwedge^2 S\otimes S$ becomes
$$x=(0,2)\wedge(1,1)\otimes(1,0)-(0,2)\wedge(0,1)\otimes(2,0)+(1,1)\wedge(0,1)\otimes(1,1).$$
If we take the pre-order coming from the linear map $L(x_1,x_2)=x_2$ then $P_M=(0,2)$ is the unique maximum of $supp(x)=\{(0,2),(1,1),(0,1)\}$ and the lemma gives $y=(1,1)\otimes(1,0)-(0,1)\otimes(2,0)$. Indeed, $x=(0,2)\wedge y+(1,1)\wedge(0,1)\otimes(1,1)$ and the last term does not have $(0,2)$ in the wedge part.
\begin{center}
\begin{tikzpicture}
\draw [<->] (2.5,0) -- (0,0) -- (0,2.5);
\draw[lightgray, fill=lightgray] (0,2) -- (0,1) -- (1,1) -- (0,2);
\draw [thick] (0,0) -- (2,0) -- (0,2) -- (0,0);
\fill[black] (0,2) circle (2 pt);
\fill[black] (0,0) circle (1.5 pt);
\fill[black] (1,0) circle (1.5 pt);
\fill[black] (2,0) circle (1.5 pt);
\fill[black] (0,1) circle (1.5 pt);
\fill[black] (1,1) circle (1.5 pt);

\node at (0.3,2.2) {\small $P_M$};
\node at (1.2,1.5) {\small $supp(x)$};
\node at (2.8,0) {$x_1$};
\node at (0,2.7) {$x_2$};
\node at (1,-.2) {1};
\node at (2,-.2) {2};
\node at (-.2,1) {1};
\node at (-.2,2) {2};
\node at (.7,.7) {$S$};
\end{tikzpicture}
\hspace{100 pt}
\begin{tikzpicture}
\draw [<->] (2.5,0) -- (0,0) -- (0,2.5);
\draw[gray] (1,1) -- (0,2);
\draw [thick] (0,0) -- (2,0) -- (1,1) -- (0,1) -- (0,0);
\fill[black] (0,2) circle (1.5 pt);
\fill[black] (0,0) circle (1.5 pt);
\fill[black] (1,0) circle (1.5 pt);
\fill[black] (2,0) circle (1.5 pt);
\fill[black] (0,1) circle (1.5 pt);
\fill[black] (1,1) circle (1.5 pt);

\node at (2.8,0) {$x_1$};
\node at (0,2.7) {$x_2$};
\node at (1,-.2) {1};
\node at (2,-.2) {2};
\node at (-.2,1) {1};
\node at (-.2,2) {2};

\node at (.7,.5) {$T'$};
\end{tikzpicture}
\end{center}
\begin{theorem}\label{Tleqp}
Notation as above. Suppose $\#T\leq p$, then $\delta:\bigwedge^pS\otimes T\rightarrow\bigwedge^{p-1}S\otimes(S+T)$ is injective.
\end{theorem}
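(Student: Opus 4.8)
The plan is to argue by induction on $p$, with all of the combinatorial work already packaged into Lemma \ref{Pwedge}. The base case $p=0$ is immediate: the hypothesis $\#T\le 0$ forces $T=\emptyset$, so the source $\bigwedge^0S\otimes T$ is the zero space and $\delta$ is trivially injective. For the inductive step I would assume the statement holds with $p$ replaced by $p-1$ (for all finite subsets of $\ZZ^n$), suppose $\#T\le p$, take a non-zero $x\in\ker\delta$, and derive a contradiction.

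First I would fix a lattice \emph{order} $\le$ on $\ZZ^n$ — for instance the one induced by a linear functional $\RR^n\to\RR$ whose coefficients are linearly independent over $\QQ$ — so that every non-empty finite subset of $\ZZ^n$ acquires a unique maximum. Since $x\neq 0$ and $p\ge 1$, the set $supp(x)$ is non-empty and hence has a unique maximum $P_M$; also $T\neq\emptyset$ (otherwise the source is $0$), so $T$ has a unique $\le$-maximal point. Now Lemma \ref{Pwedge} applies: with $S'=supp(x)\setminus\{P_M\}$ and $T'$ the set of non-maximal points of $T$, there is a non-zero $y\in\ker\delta'$, where $\delta'\colon\bigwedge^{p-1}S'\otimes T'\to\bigwedge^{p-2}S'\otimes(S'+T')$ is the analogous map. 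The key numerical point is that $T'$ omits the (unique) $\le$-maximum of $T$, so $\#T'\le\#T-1\le p-1$. Hence $\delta'$ is exactly a map of the type considered in the theorem, with $p-1$ in place of $p$ and $\#T'\le p-1$; by the induction hypothesis $\delta'$ is injective, contradicting $0\neq y\in\ker\delta'$. Therefore $\ker\delta=0$, completing the induction.

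I do not expect a genuine obstacle, since the real content is Lemma \ref{Pwedge}; the points that require a little care are (i) using an honest lattice order rather than just a pre-order, so that $supp(x)$ and $T$ each have a unique maximum — this is what makes the lemma applicable and makes the bound $\#T'\le\#T-1$ exact — and (ii) handling the degenerate cases ($T=\emptyset$, $supp(x)=\emptyset$, $p=0$) so that the induction is correctly anchored and the application of the induction hypothesis at $p-1$ is legitimate.
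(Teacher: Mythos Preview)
Your proposal is correct and follows essentially the same approach as the paper: induction on $p$, with the base case $p=0$ handled by $T=\emptyset$, and the inductive step obtained by choosing a lattice order, applying Lemma~\ref{Pwedge} to produce a non-zero $y\in\ker\delta'$ with $\#T'\le p-1$, and invoking the induction hypothesis for a contradiction. Your write-up is slightly more explicit about why a genuine lattice order (rather than a pre-order) is needed and about the degenerate cases, but the argument is the same.
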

\begin{proof}
By induction on $p$. The case $p=0$ is trivial as the domain of $\delta$ is zero (because $T=\emptyset$).\newline
Let $p\geq 1$, and take a non-zero element $x$ of the kernel. Let $\leq$ be a lattice order. Now apply the construction of lemma \ref{Pwedge} to obtain $S'$ and $T'$ and a non-zero $y\in\ker\delta'$. Since $\#T'<\#T$, we have $\#T'\leq p-1$. Applying the induction hypothesis we get a contradiction.
\end{proof}
Note that this also follows from Green's Linear syzygy theorem \cite[Theorem 7.1]{eisenbud} applied to the graded module $\bigoplus_{q\geq 0}qS+T$ over the graded ring $\bigoplus_{q\geq 0}qS$. We give this direct proof because we rely on the same technique later.\newline
We now want to construct elements of the kernel of $\delta$ when $p=\#T-1$.
To this end we do the following construction:
let $X=\{P\in\ZZ^n|P+T\subseteq S\}$ and consider the elements of $X$ as variables. To any monomial $A=\prod_{P\in X}P^{a_P}$ of degree $p$ with variables in $X$ we will associate an element $x_A$ of the kernel of $\delta$.

Let $P_1,\ldots,P_p$ be a list of points in $X$ such that each point $P$ occurs $a_P$ times in the list. Let $Q_1,\ldots,Q_{p+1}$ be a list of all points of $T$. Now we define
$$x_A=\prod_P\frac{1}{a_P!}\sum_{\sigma\in S_{p+1}}sgn(\sigma)(P_1+Q_{\sigma(1)})\wedge\ldots\wedge(P_p+Q_{\sigma(p)})\otimes Q_{\sigma(p+1)}.$$
Up to sign, this will be independent of the choice of lists $P_1,\ldots,P_p$ and\newline $Q_1,\ldots,Q_{p+1}$.
\begin{lemma}
The $x_A$ we just constructed has integer coefficients and is in the kernel of $\delta$.
\end{lemma}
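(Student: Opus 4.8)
The plan is to read the factor $\prod_P \frac{1}{a_P!}$ as a normalisation constant and to prove both assertions by pure symmetry arguments on the $(p+1)!$ summands, with essentially no computation beyond sign bookkeeping. Write $y=\sum_{\sigma\in S_{p+1}}\mathrm{sgn}(\sigma)\,(P_1+Q_{\sigma(1)})\wedge\cdots\wedge(P_p+Q_{\sigma(p)})\otimes Q_{\sigma(p+1)}$, so that $x_A=\big(\prod_P\frac{1}{a_P!}\big)\,y$. First note that each summand already lies in $\bigwedge^p S\otimes T$, because $P_i\in X$ forces $P_i+Q\in S$ for every $Q\in T$; hence, after reordering the wedge factors into decreasing order, each summand is $\pm$ a standard generator (or zero, if two factors coincide). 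So $y$ is an integral combination of generators, and for integrality it suffices to show that $\prod_P a_P!$ divides $y$ as such a combination.

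For integrality, let $H\le S_{p+1}$ be the subgroup of permutations that fix $p+1$ and permute, within each block, the indices $i\in\{1,\dots,p\}$ on which $i\mapsto P_i$ is constant; thus $|H|=\prod_P a_P!$. The claim is that right multiplication by $H$ fixes every summand of $y$. Indeed, for $\tau\in H$, reindexing $\bigwedge_i(P_i+Q_{(\sigma\tau)(i)})$ by $j=\tau(i)$ rewrites it as $\mathrm{sgn}(\tau)\,\bigwedge_j(P_{\tau^{-1}(j)}+Q_{\sigma(j)})$, and $P_{\tau^{-1}(j)}=P_j$ since $j$ and $\tau^{-1}(j)$ lie in the same block; the last tensor factor is unchanged since $\tau$ fixes $p+1$; and $\mathrm{sgn}(\sigma\tau)=\mathrm{sgn}(\sigma)\,\mathrm{sgn}(\tau)$, so the two copies of $\mathrm{sgn}(\tau)$ cancel and the $\sigma\tau$-summand equals the $\sigma$-summand. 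Since right multiplication by $H$ is free, the $(p+1)!$ summands split into orbits of size $|H|$, each orbit consisting of $|H|$ equal terms; therefore $y=|H|\cdot(\text{integral combination of generators})$, and $x_A$ has integer coefficients.

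For membership in $\ker\delta$, apply $\delta$ term by term. The $k$-th summand of $\delta$ applied to the $\sigma$-term (for $1\le k\le p$) has tensor factor $P_k+Q_{\sigma(k)}+Q_{\sigma(p+1)}$, which is symmetric under interchanging $\sigma(k)$ and $\sigma(p+1)$, while its wedge factors $(P_i+Q_{\sigma(i)})$, $i\ne k$, involve only indices $i\le p$ with $i\ne k$. Pairing $\sigma$ with $\sigma\cdot(k,p{+}1)$ — a fixed-point-free involution of $S_{p+1}$ — leaves this wedge and this tensor factor untouched but flips $\mathrm{sgn}(\sigma)$, so the $k$-th summands cancel in pairs; summing over $k=1,\dots,p$ gives $\delta(y)=0$, hence $\delta(x_A)=0$.

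I expect no serious obstacle: the lemma is really two instances of "group the $(p+1)!$ summands into orbits of an obvious subgroup or involution". The only step needing care is the sign bookkeeping in the integrality argument — matching the $\mathrm{sgn}(\tau)$ produced by reordering the wedge against the $\mathrm{sgn}(\tau)$ inside $\mathrm{sgn}(\sigma\tau)$ — together with the parallel observation (implicit in the phrase that $x_A$ is defined "up to sign") that permuting the lists $P_1,\dots,P_p$ or $Q_1,\dots,Q_{p+1}$ merely rescales $y$ by a sign, which follows from the very same reindexing.
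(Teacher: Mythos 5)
Your proof is correct and follows essentially the same approach as the paper: for integrality, the paper likewise groups $\sigma$ into cosets of the stabilizer subgroup (your $H$) and observes the reordering sign cancels against $\mathrm{sgn}(\sigma)$; for the kernel claim, the paper's pairing of $(\sigma,i)$ with $(\sigma',i)$ where $\sigma'=\sigma\circ(i\;p{+}1)$ is exactly your involution $\sigma\mapsto\sigma\cdot(k,p{+}1)$. Your write-up is just slightly more explicit about the coset structure and the reindexing.
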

\begin{proof}
Consider permutations $\sigma,\sigma'\in S_{p+1}$ as in the previous definition such that $\sigma(\{i|P_i=P\})=\sigma'(\{i|P_i=P\})$ for all $P\in X$. We claim that the terms in $x_A$ corresponding to $\sigma$ and $\sigma'$ will be equal. This is because in the wedge product the only thing that changes is the order, and the change in sign caused by changing the order is compensated by the change in $sgn(\sigma)$.\newline
Now the number of bijections $\sigma'$ with the property that $\sigma(\{i|P_i=P\})=\sigma'(\{i|P_i=P\})$, $\forall P$ is equal to $\prod_Pa_P!$, hence the expression will have integer coefficients.\newline
We now prove that $x_A$ is in the kernel of $\delta$. Obviously the sums $P_i+Q_{\sigma(i)}$ are all in $S$. We claim that when applying $\delta$ everything cancels. Let $C$ be the set of all ordered pairs $(\sigma,i)$ where $\sigma\in S_{p+1}$ and $i\in\{1,\ldots,p\}$. Then
\begin{align}
&\big(\prod_Pa_P!\big)\delta(x_A)\nonumber \\
&=\sum_{(\sigma,i)\in C}(-1)^isgn(\sigma)(P_1+Q_{\sigma(1)})\wedge\ldots\wedge\widehat{(P_i+Q_{\sigma(i)})}\wedge\ldots\wedge(P_p+Q_{\sigma(p)}))\nonumber \\
&\otimes(Q_{\sigma(p+1)}+Q_{\sigma(i)}+P_i).\nonumber
\end{align}
We now partition $C$ into (unordered) pairs: $(\sigma,i)$ and $(\sigma',i')$ belong to the same pair if either they are equal or the following conditions are met:
\begin{itemize}
\item $i=i'$
\item $\sigma(j)=\sigma'(j)$, for all $j\notin\{i,p+1\}$
\item $\sigma(i)=\sigma'(p+1)$ and $\sigma'(i)=\sigma(p+1)$.
\end{itemize}
These conditions imply that $\sigma'^{-1}\circ \sigma$ is a transposition, and hence has sign -1. One now easily sees that pairs in $C$ yield terms that cancel.
\end{proof}
\noindent\textbf{Example.}\newline
Suppose $S=\{(2,0),(1,1),(1,0),(0,2),(0,1),(0,0)\}$ and $T=\{(1,0),(0,1),(0,0)\}$ and $p=2$. Then $X=\{(1,0),(0,1),(0,0)\}$. Take for example the monomial $A=(1,0)(0,1)$. If we take the lists $(1,0),(0,1)$ and $(1,0),(0,1),(0,0)$ we get: 
\begin{align}
x_A=&(1,0)\wedge(1,1)\otimes(0,1)-(1,0)\wedge(0,2)\otimes(1,0)+(2,0)\wedge(0,2)\otimes(0,0)\nonumber \\ -&(2,0)\wedge(0,1)\otimes(0,1)+(1,1)\wedge(0,1)\otimes(1,0)-(1,1)\wedge(1,1)\otimes(0,0).\nonumber
\end{align}
Of course the last term is zero. Note that each term is of the form $P\wedge Q\otimes((2,2)-P-Q)$ where $P$ belongs to the lower right blue triangle and $Q$ to the upper left one.
\begin{center}
\begin{tikzpicture}
\draw[blue,thick] (.5,-.3) -- (2.9,-.3) -- (.5,2.1) -- (.5,-.3);
\draw[blue,thick] (-.5,.7) -- (-.5,3.1) -- (1.9,.7) -- (-.5,.7);
\node at (1,0) {(1,0)};
\node at (2,0) {(2,0)};
\node at (0,1) {(0,1)};
\node at (0,2) {(0,2)};
\node at (1,1) {(1,1)};
\node at (0,0) {(0,0)};
\node at (1.6,1.6) {$S$};
\end{tikzpicture}
\hspace{100 pt}
\begin{tikzpicture}
\draw [<->] (1.5,0) -- (0,0) -- (0,1.5);
\draw [thick] (0,0) -- (1,0) -- (0,1) -- (0,0);
\fill[black] (0,0) circle (1.5 pt);
\fill[black] (1,0) circle (1.5 pt);
\fill[black] (0,1) circle (1.5 pt);


\node at (-.5,-.3) {(0,0)};
\node at (1,-.3) {(1,0)};
\node at (-.5,1) {(0,1)};
\node at (.3,.3) {$T$};
\end{tikzpicture}
\end{center}
\noindent\textbf{Example.}\newline
Suppose $S=\{(0,0),(1,0),(2,0),(3,0),(0,1),(1,1),(2,1),(3,1)\}$ and $T=$\\ $\{(0,0),(1,0),(0,1),(1,1)\}$ so that $X=\{(0,0),(1,0),(2,0)\}$, $p=3$ and consider the monomial $A=(0,0)^2(2,0)$, then we get
\begin{align}
x_A=&-(0,0)\wedge(1,0)\wedge(3,1)\otimes(0,1)+(0,0)\wedge(1,0)\wedge(2,1)\otimes(1,1)\nonumber \\
&-(0,0)\wedge(1,1)\wedge(2,1)\otimes(1,0)+(0,0)\wedge(1,1)\wedge(3,0)\otimes(0,1)\nonumber \\
&-(0,0)\wedge(0,1)\wedge(3,0)\otimes(1,1)+(0,0)\wedge(0,1)\wedge(3,1)\otimes(1,0)\nonumber \\
&-(1,0)\wedge(0,1)\wedge(3,1)\otimes(0,0)+(1,0)\wedge(0,1)\wedge(2,0)\otimes(1,1)\nonumber \\
&-(1,0)\wedge(1,1)\wedge(2,0)\otimes(0,1)+(1,0)\wedge(1,1)\wedge(2,1)\otimes(0,0)\nonumber \\
&-(0,1)\wedge(1,1)\wedge(3,0)\otimes(0,0)+(0,1)\wedge(1,1)\wedge(2,0)\otimes(1,0)\nonumber
\end{align}
In this case the first two wedge factors in each term are from the left blue square and the third wedge factor is from the right blue square. In the definition there are 24 terms but each term occurs twice and we divide by two so only twelve terms are left.
\begin{center}
\begin{tikzpicture}
\draw[blue,thick] (-.5,-.5) -- (1.5,-.5) -- (1.5,1.5) -- (-.5,1.5) -- (-.5,-.5);
\draw[blue,thick] (1.5,-.5) -- (3.5,-.5) -- (3.5,1.5) -- (1.5,1.5) -- (1.5,-.5);
\node at (0,0) {(0,0)};
\node at (1,0) {(1,0)};
\node at (2,0) {(2,0)};
\node at (3,0) {(3,0)};
\node at (0,1) {(0,1)};
\node at (1,1) {(1,1)};
\node at (2,1) {(2,1)};
\node at (3,1) {(3,1)};
\node at (1.5,1.8) {$S$};
\end{tikzpicture}
\hspace{100 pt}
\begin{tikzpicture}
\draw [<->] (1.5,0) -- (0,0) -- (0,1.5);
\draw [thick] (0,0) -- (1,0) -- (1,1) -- (0,1) -- (0,0);
\fill[black] (0,0) circle (1.5 pt);
\fill[black] (1,0) circle (1.5 pt);
\fill[black] (0,1) circle (1.5 pt);
\fill[black] (1,1) circle (1.5 pt);

\node at (-.5,-.3) {(0,0)};
\node at (1,-.3) {(1,0)};
\node at (-.5,1.3) {(0,1)};
\node at (1,1.3) {(1,1)};
\node at (.5,.5) {$T$};
\end{tikzpicture}
\end{center}
One way to get rid of the factor $\prod_P\frac{1}{a_P!}$ is to only sum over one element of each equivalence class, where two permutations $\sigma,\sigma'$ are equivalent if $\sigma'(\{i|P_i=P\})=\sigma'(\{i|P_i=P\})$ for all $P\in X$. So the construction works over any field.
\begin{proposition} \label{wedgeindependent}
The $x_A$ for distinct monomials $A$ are linearly independent and the support of any linear combination of them is the convex hull of the union of the $supp(x_A)$ occurring with non-zero coefficient.
\end{proposition}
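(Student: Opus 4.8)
The plan is to establish both assertions at once, by induction on $p$ (equivalently, on $\#T=p+1$); the engine is a \emph{peeling identity} that expresses the part of $x_A$ supported over a vertex of its support as the wedge of that vertex with a strictly smaller instance of the same construction. To set it up, fix a generic linear functional $L\colon\RR^n\to\RR$ and let $\le$ be the induced lattice order, so that $T=\{Q_1>Q_2>\dots>Q_{p+1}\}$. For a degree-$p$ monomial $A=\prod_PP^{a_P}$ in the variables $X$, write $R_A=\{P+Q\mid a_P>0,\ Q\in T\}$; this is exactly the set of points that can occur as a wedge entry of a term of $x_A$. Let $P^\ast$ be the $L$-largest element of $\{P\mid a_P>0\}$ and put $v=P^\ast+Q_1$, the unique $L$-maximum of $R_A$. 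The identity I would prove is that the sum of those terms of $x_A$ in which $v$ occurs as a wedge factor equals $v\wedge\tilde x_{A/P^\ast}$, where $A/P^\ast$ is $A$ with one factor $P^\ast$ removed and $\tilde x$ denotes the construction of this section for the data $(S,\ T\setminus\{Q_1\},\ p-1)$. This is a legitimate instance: $\#(T\setminus\{Q_1\})=(p-1)+1$, and the support of $A/P^\ast$ still lies in $X$, hence in $\{P\mid P+(T\setminus\{Q_1\})\subseteq S\}$.

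To prove the peeling identity one unwinds the symmetrised sum defining $x_A$. A term can contain $v$ as a wedge factor only when some copy of $P^\ast$ is paired with $Q_1$: indeed $v$ is the strict maximum of $R_A$, so $P+Q<v$ for every other pair with $a_P>0$, $Q\in T$. By the built-in symmetrisation over the copies of each point one may fix a single copy of $P^\ast$, assign it $Q_1$ (yielding the factor $v$), and recognise what remains as precisely the symmetrised sum defining $\tilde x_{A/P^\ast}$; the constants $1/\prod_Pa_P!$ on the two sides cancel and the overall sign comes out $+1$. Conversely, every term of $x_A$ that does \emph{not} contain $v$ has all of its wedge factors strictly below $v$, and every wedge factor of $\tilde x_{A/P^\ast}$ is $\le P^\ast+Q_2<v$; thus the two ``halves'' of $x_A$ share no basis term, and the identity really extracts the full $v$-part. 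I expect this bookkeeping --- the cancellation of the symmetrisation constants, the sign, and the genuineness of the splitting --- to be the main technical point; everything afterwards is carried along by the induction.

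With the identity available I would first treat a single monomial, proving $x_A\neq0$ and $supp(x_A)=\mathrm{conv}(R_A)$. Here $supp(x_A)\subseteq\mathrm{conv}(R_A)$ is immediate since the wedge entries of $x_A$ lie in $R_A$. For the reverse it suffices to show each vertex $v$ of $\mathrm{conv}(R_A)$ actually occurs as a wedge entry: choosing $L$ so that $v$ is the unique $L$-maximum of the finite set $R_A$, the peeling identity identifies the $v$-part of $x_A$ with $v\wedge\tilde x_{A/P^\ast}$, which is nonzero because $\tilde x_{A/P^\ast}\neq0$ by the inductive hypothesis and none of its terms involves $v$. Since $supp(x_A)$ is convex and contains all vertices of $\mathrm{conv}(R_A)$, equality follows; and $x_A\neq0$ because for $p\ge1$ this support is nonempty (the base case $p=0$ is trivial, $x_A$ then being a single element of $T$).

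For the general statement, fix scalars $c_A$. Again $supp\bigl(\sum_Ac_Ax_A\bigr)\subseteq\mathrm{conv}\bigl(\bigcup_{c_A\neq0}supp(x_A)\bigr)$ is immediate, and by the previous paragraph the right-hand side equals $\mathrm{conv}\bigl(\bigcup_{c_A\neq0}R_A\bigr)$. For the reverse, let $v$ be a vertex of this polytope; being a vertex of the convex hull of a finite set, $v$ lies in $R_{A_0}$ for some $A_0$ with $c_{A_0}\neq0$. Choose $L$ with $v$ the unique $L$-maximum of $\bigcup_{c_A\neq0}R_A$; then $v=P^\ast+Q_1$ with $P^\ast=v-Q_1$, and $\mathcal A^\ast:=\{A\mid c_A\neq0,\ v\in R_A\}$ is nonempty (it contains $A_0$) and consists exactly of the $A$ with $c_A\neq0$ for which $P^\ast$ is the $L$-maximum of the support of $A$. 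Since no term of $x_A$ contains $v$ when $c_A\neq0$ and $A\notin\mathcal A^\ast$, the peeling identity gives that the $v$-part of $\sum_Ac_Ax_A$ is $v\wedge\sum_{A\in\mathcal A^\ast}c_A\tilde x_{A/P^\ast}$. The monomials $A/P^\ast$, $A\in\mathcal A^\ast$, are pairwise distinct of degree $p-1$, so by the inductive hypothesis (linear independence at level $p-1$ for the data $(S,T\setminus\{Q_1\})$) the inner sum is nonzero, and again none of its terms involves $v$; hence $v$ occurs as a wedge entry of $\sum_Ac_Ax_A$, which gives the reverse inclusion. Finally, linear independence is the special case $\sum_Ac_Ax_A=0$: then the support is empty, and the equality just proved forces $\{A\mid c_A\neq0\}=\emptyset$, since every $supp(x_A)$ is nonempty for $p\ge1$ (the $p=0$ case being trivial).
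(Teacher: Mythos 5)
Your proposal is correct and follows essentially the same route as the paper: induction on $p$, picking a generic lattice order so that $v=P^\ast+Q_1$ (the paper's $P_M+Q_M$) is the strict maximum among possible wedge entries, and peeling it off to reduce to the $(p-1)$ case for the data $(S,\,T\setminus\{Q_1\})$, then using the inductive linear independence of the resulting $\tilde x_{A/P^\ast}$. The differences are presentational only: you treat the single-monomial case separately and argue via vertices of the convex hull, where the paper argues that the support function of a generic $L$ agrees on both sides; and the precise sign in the peeling identity (which you assert is $+1$, the paper just writes $\pm$) is immaterial to either argument.
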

\begin{proof}
By induction on $p$. In the case $p=0$ there is only one monomial namely the constant monomial 1. The corresponding $x_A$ is $\wedge(\emptyset)\otimes Q$ where $Q$ is the unique point of $T$. So the statement is obvious. So suppose $p\geq 1$. Let $x=\sum_j\lambda_jx_{A_j}$ with the $A_j$ distinct. We have to prove that $$supp(x)=\text{conv}\Big(\bigcup_jsupp(x_{A_j})\Big)\neq\emptyset.$$
To prove this equality, it is enough to prove that every linear map $L:\RR^n\rightarrow\RR$ attains the same maximum on both sides of the equation. It is enough to show this when $L|_{\ZZ^n}$ is injective (as these $L$ are dense). Given such an $L$, let $\leq$ be the order it induces on $\ZZ^n$. Let $Q_M$ be the maximum of $T$ for this order and $P_M\in X$ the greatest point occurring as a variable in some monomial $A_j$. We will prove that $P_M+Q_M$ is the maximum of both sides of the equation, proving that $L$ attains the same maximum on both, and that both sides are non-empty. Obviously nothing greater than $P_M+Q_M$ can possibly occur in any $supp(x_{A_j})$. We have
$$x=\underset{P_M|A_j}{\sum_j}\lambda_jx_{A_j}+\underset{P_M\text{ does not occur in }A_j}{\sum_j}\lambda_jx_{A_j}.$$
For any $A_j$ containing $P_M$ we define $B_j$ as the monomial $A_j/P_M$ of degree $p-1$. Using $T'=T\backslash\{Q_M\}$ we can associate to any $B_j$ an element $x_{B_j}$ so that $x_{A_j}=\pm(P_M+Q_M)\wedge x_{B_j}$ plus terms where everything in the $\wedge$ part is smaller than $P_M+Q_M$. By induction $x_{B_j}\neq 0$ so $P_M+Q_M$ is the maximum of $supp(x_{A_j})$. Finally
$$x=(P_M+Q_M)\wedge\underset{P_M|A_j}{\sum_j}\pm\lambda_jx_{B_j}+\text{ terms without }P_M+Q_M\text{ in the }\wedge\text{ part}.$$
By induction the linear combination of the $x_{B_j}$ is not zero so $P_M+Q_M$ is the maximum of $supp(x)$.
\end{proof}
So far, we have studied the kernel of the map
$$\delta:\bigwedge^pS\otimes T\rightarrow\bigwedge^{p-1}S\otimes(S+T):$$
$$P_1\wedge\ldots\wedge P_p\otimes Q\longmapsto\sum_{i=1}^p(-1)^iP_1\wedge\ldots\wedge\hat{P_i}\wedge\ldots\wedge P_p\otimes (Q+P_i).$$
We now introduce the following maps
$$\delta_i:S^{\otimes p}\otimes T\rightarrow S^{\otimes (p-1)}\otimes(S+T):$$
$$P_1\otimes\ldots\otimes P_p\otimes Q\mapsto P_1\otimes\ldots\otimes\hat{P_i}\otimes\ldots\otimes P_p\otimes(P_i+Q).$$
This time we look at the intersection of the kernels of the $\delta_i$. If $p=0$ there is nothing to intersect so we put $\bigcap_{i=1}^0\ker\delta_i=T$.\newline We introduce this new machinery because it helps us prove our main result.\\

\noindent\textbf{Example.} Let $S=T=\{(1,1),(1,0),(0,1),(0,0)\}$ then
\begin{align}
x=&(1,0)\otimes(0,1)\otimes(0,0)-(1,0)\otimes(0,0)\otimes(0,1)\nonumber \\
-&(0,0)\otimes(0,1)\otimes(1,0)+(0,0)\otimes(0,0)\otimes(1,1)\nonumber
\end{align}
is in $\ker\delta_1\cap\ker\delta_2$.
\begin{proposition}
There is an injective map
$$\iota:\bigwedge^p S\otimes T\rightarrow S^{\otimes p}\otimes T:\hspace{5 pt}
P_1\wedge\ldots\wedge P_p\otimes Q\mapsto\sum_{\sigma\in S_p}sgn(\sigma)P_{\sigma(1)}\otimes\ldots\otimes P_{\sigma(p)}\otimes Q$$
and $\iota(\ker\delta)\subseteq\bigcap_{i=1}^p\ker\delta_i$
\end{proposition}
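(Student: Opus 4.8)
The plan is to prove the two assertions separately: first that $\iota$ is a well-defined injection, and then that $\iota(\ker\delta)\subseteq\bigcap_{i=1}^p\ker\delta_i$. For well-definedness one checks that $(P_1,\ldots,P_p)\mapsto\sum_{\sigma\in S_p}\mathrm{sgn}(\sigma)\,P_{\sigma(1)}\otimes\cdots\otimes P_{\sigma(p)}$ is multilinear and alternating in the $P_i$ (if $P_a=P_b$, pairing $\sigma$ with $(a\,b)\circ\sigma$ gives termwise cancellation), so it factors through $\bigwedge^pS$; tensoring with the free module $T$ changes nothing. For injectivity one fixes a total order on the basis $S$: the image of a basis vector $P_1\wedge\cdots\wedge P_p\otimes Q$ with $P_1>\cdots>P_p$ contains the pure tensor $P_1\otimes\cdots\otimes P_p\otimes Q$ with coefficient $1$ (the term $\sigma=\mathrm{id}$), and this "sorted" pure tensor occurs in $\iota$ of no other basis vector; since all coefficients are $\pm1$, distinct basis vectors have linearly independent images, and this works over any field $k$.

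For the inclusion I would introduce the subspace $A\subseteq S^{\otimes p}\otimes T$ of tensors that are totally antisymmetric in the first $p$ factors (i.e.\ on which $S_p$ acts through the sign). A one-line reindexing argument ($\sigma\mapsto\sigma\tau^{-1}$) shows that $\iota$ takes values in $A$. Next, I would observe that $\delta_i=\delta_1\circ\rho_i$, where $\rho_i\in S_p$ is the $i$-cycle $(1\,2\,\cdots\,i)$ acting on the tensor slots (it moves the content of slot $i$ to slot $1$ and shifts slots $1,\ldots,i-1$ up by one); since $\mathrm{sgn}(\rho_i)=(-1)^{i-1}$, restricting to $A$ gives $\delta_i|_A=(-1)^{i-1}\,\delta_1|_A$, hence $\bigcap_{i=1}^p\ker(\delta_i|_A)=\ker(\delta_1|_A)$. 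So it is enough to show $\iota(\ker\delta)\subseteq\ker\delta_1$.

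The heart of the matter is then the single identity $\delta_1\circ\iota=-\,\iota'\circ\delta$, where $\iota'\colon\bigwedge^{p-1}S\otimes(S+T)\to S^{\otimes(p-1)}\otimes(S+T)$ is the analogous antisymmetrization. I would verify it on a pure wedge $P_1\wedge\cdots\wedge P_p\otimes Q$ (general elements being linear combinations of these): expand $\delta_1(\iota(\cdot))$, a sum of $p!$ terms, and group them by which point $P_j:=P_{\sigma(1)}$ is pushed into the $T$-slot. For fixed $j$, the inner sum over all $\sigma$ with $\sigma(1)=j$ is an alternating tensor in the remaining $p-1$ slots supported on orderings of $\{P_i:i\neq j\}$, hence a scalar multiple of $\iota'(P_1\wedge\cdots\wedge\widehat{P_j}\wedge\cdots\wedge P_p)$; reading off the coefficient of the distinguished ordering $(P_1,\ldots,\widehat{P_j},\ldots,P_p)$ identifies this scalar as $(-1)^{j-1}$ (the sign of the $j$-cycle sending $P_j$ to the front). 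Since $(-1)^{j-1}=-(-1)^j$ and $(-1)^j$ is exactly the sign appearing in $\delta$, the identity follows. Then for $x\in\ker\delta$ we get $\delta_1(\iota(x))=-\iota'(\delta(x))=0$, so $\iota(x)\in\ker(\delta_1|_A)=\bigcap_{i=1}^p\ker(\delta_i|_A)\subseteq\bigcap_{i=1}^p\ker\delta_i$.

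I expect the only genuine obstacle to be the sign bookkeeping in the last step — confirming that the scalar relating the grouped antisymmetrization sum to $\iota'$ is precisely $(-1)^{j-1}$ rather than some other sign, and that it cancels correctly against the $(-1)^j$ in $\delta$. For this it suffices to track the single representative permutation carrying $(P_1,\ldots,P_p)$ to $(P_j,P_1,\ldots,\widehat{P_j},\ldots,P_p)$ and note it is a $j$-cycle; everything else is formal.
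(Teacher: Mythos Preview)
Your argument is correct and lands on the same key identity as the paper, namely $\delta_i\circ\iota=(-1)^i\,g\circ\delta$ (your $\iota'$ is the paper's $g$), from which $\iota(\ker\delta)\subseteq\bigcap_i\ker\delta_i$ is immediate. The organizational difference is that the paper proves this identity for every $i$ at once, via an explicit bijection $S_p\to\{1,\ldots,p\}\times S_{p-1}$ sending $\sigma$ to the pair $(j,\tau)$ determined by $\sigma\circ(i\ i{+}1\ \cdots\ p)=(j\ j{+}1\ \cdots\ p)\circ\tau$; you instead first observe that $\iota$ lands in the antisymmetric subspace $A$, note that $\delta_i|_A=(-1)^{i-1}\delta_1|_A$ because $\delta_i=\delta_1\circ\rho_i$ with $\rho_i$ an $i$-cycle, and then only verify the single case $i=1$. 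Your reduction is a clean shortcut and makes the sign bookkeeping lighter; the paper's uniform bijection has the virtue of treating all $i$ symmetrically in one stroke. Your injectivity argument (sorted leading term) is also a bit more explicit than the paper's one-line ``cancellation is impossible.''
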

\begin{proof}
Note that the definition of $\iota$ does not depend on any choices. It is injective as cancellation is impossible. Let us prove the last assertion. We define
$$g:\bigwedge^{p-1}S\otimes(S+T)\rightarrow S^{\otimes(p-1)}\otimes(S+T)$$
analogously to $\iota$. If we can prove that $\delta_i\circ\iota=(-1)^ig\circ\delta$ for all $i$ then it follows that $\iota(\ker\delta)\subseteq\bigcap_i\ker\delta_i$. Let $x=P_1\wedge\ldots\wedge P_p\otimes Q$, we compute
\begin{align}
\delta_i(\iota(x))&=\sum_{\sigma\in S_p}sgn(\sigma)P_{\sigma(1)}\otimes\ldots\otimes\widehat{P_{\sigma(i)}}\otimes\ldots\otimes P_{\sigma(p)}\otimes(P_{\sigma(i)}+Q)\nonumber \\
g(\delta(x))&=\sum_{j=1}^p\sum_{\tau\in S_{p-1}}(-1)^jsgn(\tau)P_{\tau'(1)}\otimes\ldots\otimes P_{\tau'(p-1)}\otimes(P_j+Q),\nonumber
\end{align}
where $\tau'=(j$ $j+1\ldots p)\circ\tau$. Here $(j$ $j+1\ldots p)$ maps every number from $j$ up to $p-1$ to itself plus one and everything smaller than $j$ to itself, and $p$ is mapped to $j$. For every $\tau\in S_{p-1}$ we formally put $\tau(p)=p$ so that $S_{p-1}\subseteq S_p$. There is a bijection from $S_p$ to $\{1,\ldots,p\}\times S_{p-1}$ mapping $\sigma$ to $(j,\tau)$ where $$\sigma\circ(i\hspace{4 pt}i+1\ldots p)=(j\hspace{4 pt}j+1\ldots p)\circ\tau.$$ Note that $sgn(\sigma)(-1)^{p-i}=sgn(\tau)(-1)^{p-j}$. Using this bijection one sees that $\delta_i(\iota(x))=(-1)^ig(\delta(x))$. This proves that $\delta_i\circ\iota=(-1)^ig\circ\delta$ and hence that $\iota(\ker\delta)\subseteq\ker\delta_i$.
\end{proof}
For any sequence $P_1,\ldots,P_p$ of points in $X=\{P|P+T\subseteq S\}$ one defines an element of $\bigcap_{i=1}^p\ker\delta_i$:
$$x_{P_1,\ldots,P_p}:=\sum_{\sigma\in S_{p+1}}sgn(\sigma)(P_1+Q_{\sigma(1)})\otimes\ldots\otimes(P_p+Q_{\sigma(p)})\otimes Q_{\sigma(p+1)}$$
where $Q_1,\ldots,Q_{p+1}$ is a list of all the points of $T$. Whenever we use this notation we assume that $\#T=p+1$. Just as the $x_A$ are intended to be a basis of $\ker\delta$, the $x_{P_1,\ldots,P_p}$ are intended to be a basis of $\bigcap_{i=1}^p\ker\delta_i$.
\begin{lemma}
Consider the right group action of $S_p$ on the set of sequences $P_1,\ldots,P_p\in X$ by permutation. So $P_1,\ldots,P_p\cdot\sigma=P_{\sigma(1)},\ldots,P_{\sigma(p)}$. For a given such sequence let $A$ be the monomial $P_1\ldots P_p$, then
$$\iota(x_A)=\sum_{\bar{\sigma}\in \textup{Stab}(P_1,\ldots,P_p)\backslash S_p}x_{P_{\sigma(1)},\ldots,P_{\sigma(p)}}.$$
(We sum over the right cosets of the stabilizer.)
\end{lemma}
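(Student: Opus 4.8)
The plan is to evaluate $\iota$ directly on the defining expression for $x_A$ and then to recognize the outcome, after a reindexing of permutations, as the sum over right cosets of the stabilizer. I would fix once and for all a list $Q_1,\ldots,Q_{p+1}$ of the points of $T$, used in $x_A$ as well as in every $x_{P_{\sigma(1)},\ldots,P_{\sigma(p)}}$. Since $\iota$ is linear and $\iota(R_1\wedge\ldots\wedge R_p\otimes Q)=\sum_{\tau\in S_p}sgn(\tau)\,R_{\tau(1)}\otimes\ldots\otimes R_{\tau(p)}\otimes Q$, applying $\iota$ to the $\sigma$-term of $x_A$ (with $R_j=P_j+Q_{\sigma(j)}$) turns it into $\sum_{\tau\in S_p}sgn(\tau)(P_{\tau(1)}+Q_{\sigma(\tau(1))})\otimes\ldots\otimes(P_{\tau(p)}+Q_{\sigma(\tau(p))})\otimes Q_{\sigma(p+1)}$. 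So $\iota(x_A)$ becomes a double sum over $S_{p+1}\times S_p$ whose summand is $sgn(\sigma)sgn(\tau)$ times $(P_{\tau(1)}+Q_{\sigma(\tau(1))})\otimes\ldots\otimes(P_{\tau(p)}+Q_{\sigma(\tau(p))})\otimes Q_{\sigma(p+1)}$, the whole thing scaled by $\prod_P(a_P!)^{-1}$.

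The central step is to reindex the inner sum. For fixed $\tau\in S_p$ I would let $\hat\tau\in S_{p+1}$ be the extension of $\tau$ with $\hat\tau(p+1)=p+1$, and put $\rho=\sigma\hat\tau$. As $\sigma$ runs over $S_{p+1}$ so does $\rho$; moreover $\sigma(\tau(j))=\rho(j)$ for $1\le j\le p$, $\sigma(p+1)=\rho(p+1)$, and $sgn(\sigma)sgn(\tau)=sgn(\rho)$ because $sgn(\hat\tau)=sgn(\tau)$. After this substitution the inner sum over $\sigma$ is exactly $\sum_{\rho\in S_{p+1}}sgn(\rho)(P_{\tau(1)}+Q_{\rho(1)})\otimes\ldots\otimes(P_{\tau(p)}+Q_{\rho(p)})\otimes Q_{\rho(p+1)}=x_{P_{\tau(1)},\ldots,P_{\tau(p)}}$, and therefore
\[ \iota(x_A)=\Big(\prod_P\frac{1}{a_P!}\Big)\sum_{\tau\in S_p}x_{P_{\tau(1)},\ldots,P_{\tau(p)}}. \]

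It remains to collapse the sum over $S_p$ to a sum over cosets. The element $x_{P_{\tau(1)},\ldots,P_{\tau(p)}}$ depends only on the ordered tuple $(P_{\tau(1)},\ldots,P_{\tau(p)})=(P_1,\ldots,P_p)\cdot\tau$, and two permutations yield the same tuple precisely when they lie in the same right coset $\mathrm{Stab}(P_1,\ldots,P_p)\,\tau$ (immediate from the right action: $(P_1,\ldots,P_p)\cdot\tau=(P_1,\ldots,P_p)\cdot\tau'$ iff $\tau\tau'^{-1}\in\mathrm{Stab}(P_1,\ldots,P_p)$). Since $\mathrm{Stab}(P_1,\ldots,P_p)$ is the group of permutations shuffling indices within each block of equal $P_j$, its order is $\prod_P a_P!$, so each right coset contributes $\prod_P a_P!$ identical terms to the sum over $S_p$. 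Hence $\sum_{\tau\in S_p}x_{P_{\tau(1)},\ldots,P_{\tau(p)}}=\big(\prod_P a_P!\big)\sum_{\bar\sigma\in\mathrm{Stab}(P_1,\ldots,P_p)\backslash S_p}x_{P_{\sigma(1)},\ldots,P_{\sigma(p)}}$, the normalizing factor cancels, and the claimed identity follows.

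The main obstacle is purely the bookkeeping in the reindexing step: checking that $\sigma\mapsto\sigma\hat\tau$ is a bijection of $S_{p+1}$ and that the signs match as $sgn(\sigma)sgn(\tau)=sgn(\rho)$, while simultaneously keeping the left/right coset convention consistent with the notation $\mathrm{Stab}(P_1,\ldots,P_p)\backslash S_p$. Once those are in place the rest is formal.
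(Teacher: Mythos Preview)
Your proof is correct and follows essentially the same route as the paper's: expand $\iota(x_A)$ into a double sum over $S_{p+1}\times S_p$, reindex via $\rho=\sigma\hat\tau$ (the paper's $\sigma'=\sigma\circ\tau$) to recognize each inner sum as $x_{P_{\tau(1)},\ldots,P_{\tau(p)}}$, and then collapse the outer sum to right cosets using $|\mathrm{Stab}(P_1,\ldots,P_p)|=\prod_P a_P!$. The only minor difference is that the paper first remarks it suffices to work in characteristic zero (so that carrying the factor $\prod_P(a_P!)^{-1}$ is legitimate before it cancels), whereas you leave this implicit.
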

\begin{proof}
It is enough to prove this equality in characteristic zero. We have
\begin{align}
&\iota(x_A)\prod_Pa_P!\nonumber \\
=&\iota\Big(\sum_{\sigma\in S_{p+1}}sgn(\sigma)(P_1+Q_{\sigma(1)})\wedge\ldots\wedge(P_p+Q_{\sigma(p)})\otimes Q_{\sigma(p+1)}\Big) \nonumber \\
=&\sum_{\tau\in S_p}\sum_{\sigma\in S_{p+1}}sgn(\sigma\circ\tau)(P_{\tau(1)}+Q_{\sigma(\tau(1))})\otimes\ldots\otimes(P_{\tau(p)}+Q_{\sigma(\tau(p))})\otimes Q_{\sigma(p+1)}\nonumber \\
=&\sum_{\tau\in S_p}\sum_{\sigma'\in S_{p+1}}sgn(\sigma')(P_{\tau(1)}+Q_{\sigma'(1)})\otimes\ldots\otimes(P_{\tau(p)}+Q_{\sigma'(p)})\otimes Q_{\sigma'(p+1)}\nonumber \\
=&\sum_{\tau\in S_p}x_{P_{\tau(1)},\ldots,P_{\tau(p)}}\nonumber \\
=&\prod_Pa_P!\sum_{\bar{\tau}\in\text{Stab}(P_1,\ldots,P_p)\backslash S_p}x_{P_{\tau(1)},\ldots,P_{\tau(p)}}\nonumber
\end{align}
The last equality follows because the order of the stabilizer is $\prod_Pa_P!$. The result follows by removing the factor $\prod_Pa_P!$.
\end{proof}
\begin{lemma}\label{tensortowedge}
The span of the $x_{P_1,\ldots,P_p}$ intersected with $\iota(\ker\delta)$ is generated by the $\iota(x_A)$. In particular if the $x_{P_1,\ldots,P_p}$ are a basis of $\bigcap_{i=1}^p\ker\delta_i$ then the $x_A$ are a basis of $\ker\delta$.
\end{lemma}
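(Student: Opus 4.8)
The first move is to replace $\iota(\ker\delta)$ by $\mathrm{Im}(\iota)$. Indeed $\iota(\ker\delta)=\mathrm{Im}(\iota)\cap\bigcap_{i=1}^{p}\ker\delta_i$: the inclusion ``$\subseteq$'' is the previous proposition, and if $\iota(x)\in\bigcap_i\ker\delta_i$ then $0=\delta_1(\iota(x))=-g(\delta(x))$, so $\delta(x)=0$ by injectivity of $g$. Since every $x_{P_1,\dots,P_p}$ lies in $\bigcap_i\ker\delta_i$, so does the span $W$ of all of them, whence $W\cap\iota(\ker\delta)=W\cap\mathrm{Im}(\iota)$. It therefore suffices to show $W\cap\mathrm{Im}(\iota)=\mathrm{span}\{\iota(x_A)\}$. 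The inclusion ``$\supseteq$'' is clear: by the previous lemma each $\iota(x_A)$ is a sum of elements $x_{P_1,\dots,P_p}$, hence lies in $W\cap\mathrm{Im}(\iota)$.

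For the nontrivial inclusion I would prove, by induction, a slightly more flexible statement allowing an arbitrary variable set: for any finite $S,T$ with $\#T=p+1$ and any $X_0\subseteq\{P\mid P+T\subseteq S\}$, the space $W_0$ spanned by the $x_{P_1,\dots,P_p}$ with all $P_i\in X_0$ satisfies $W_0\cap\mathrm{Im}(\iota)=\mathrm{span}\{\iota(x_A)\mid A\text{ a degree-}p\text{ monomial in }X_0\}$. (This generality is used because the degree-$(p-1)$ problem below naturally keeps the same $X_0$, not the full variable set attached to the smaller $T$.) Fix a lattice order $\le$ and induct on the pair $(p,\max_{\le}\mathrm{supp}(w))$ lexicographically; $p=0$ is trivial. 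Let $0\neq v\in W_0\cap\mathrm{Im}(\iota)$; by the first paragraph $v=\iota(w)$ with $0\neq w\in\ker\delta$. Since reorderings of admissible sequences are still admissible, write $v=\sum_k c_k\,x_{\mathbf P_k}$ with each $\mathbf P_k$ listed with a $\le$-largest entry first. Applying Lemma \ref{Pwedge} to $w$ (with this lattice order, so $\mathrm{supp}(w)$ has a unique maximum $P_{\max}$ and $T'=T\setminus\{Q_M\}$, $Q_M=\max T$) gives $w=P_{\max}\wedge y+(\text{terms not involving }P_{\max})$ with $y\in\ker\delta'$.

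The heart of the argument is to read off $y$ from the $\mathbf P_k$-expansion. Let $\mathrm{pr}$ be the projection of $S^{\otimes p}\otimes T$ onto the span of basis tensors whose first factor is $P_{\max}$ and whose other first-$p$ factors are strictly smaller. Expanding $w$ in the standard basis shows at once that $\mathrm{pr}(\iota(w))=P_{\max}\otimes\iota'(y)$, where $\iota'$ is the analogue of $\iota$ in degree $p-1$ for $T'$, and here one uses the part of Lemma \ref{Pwedge} saying the sections $Q$ occurring in the $P_{\max}$-part of $w$ are non-maximal. On the other hand, letting $P_M$ be the $\le$-largest entry occurring in any $\mathbf P_k$, a product-of-maxima argument parallel to the one in the proof of Proposition \ref{wedgeindependent} gives $P_{\max}=P_M+Q_M$, that $\mathrm{pr}(x_{\mathbf P_k})=0$ unless $P_M$ occurs in $\mathbf P_k$, and that for such $\mathbf P_k$ (listed with one copy of $P_M$ first and remainder $\mathbf P'_k$) one has $\mathrm{pr}(x_{\mathbf P_k})=\varepsilon\,(P_M+Q_M)\otimes x'_{\mathbf P'_k}$ with $\varepsilon$ a sign not depending on $k$. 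Since tensoring with the fixed vector $P_M+Q_M$ is injective, comparing the two formulas for $\mathrm{pr}(v)$ yields $\iota'(y)=\varepsilon\sum_k c_k\,x'_{\mathbf P'_k}$, an element of the degree-$(p-1)$ space $W_0'$ built from $T'$ and the same $X_0$ (note $X_0\subseteq\{P\mid P+T\subseteq S\}\subseteq\{P\mid P+T'\subseteq S\}$). By the induction hypothesis in degree $p-1$, $\iota'(y)$ is a combination of the $\iota'(x'_B)$ over degree-$(p-1)$ monomials $B$ in $X_0$, so $y=\sum_B d_B\,x'_B$. Now invoke the identity $x_{P_M\cdot B}=\pm(P_M+Q_M)\wedge x'_B+(\text{terms all of whose wedge factors are}<P_M+Q_M)$ — again the computation from the proof of Proposition \ref{wedgeindependent} — together with the fact that the $P_{\max}$-free terms of $w$ also have all wedge factors $<P_{\max}$. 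It follows that $w':=w-\sum_B(\pm d_B)\,x_{P_M\cdot B}$ has $\iota(w')\in W_0$ and is either zero or satisfies $\max_{\le}\mathrm{supp}(w')<P_{\max}$; by the inner induction hypothesis $w'$, and hence $w$, is a combination of the $x_A$.

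Once $W\cap\mathrm{Im}(\iota)=\mathrm{span}\{\iota(x_A)\}$ is established, the ``in particular'' follows: if the $x_{P_1,\dots,P_p}$ form a basis of $\bigcap_i\ker\delta_i$ then $W=\bigcap_i\ker\delta_i\supseteq\iota(\ker\delta)$, so $W\cap\iota(\ker\delta)=\iota(\ker\delta)=\mathrm{span}\{\iota(x_A)\}$, and applying the injective $\iota^{-1}$ gives $\ker\delta=\mathrm{span}\{x_A\}$, which together with Proposition \ref{wedgeindependent} makes the $x_A$ a basis. The step I expect to be the real work is the projection computation above: checking that $\mathrm{pr}(v)$ collapses exactly to the $\varepsilon(P_M+Q_M)\otimes x'_{\mathbf P'_k}$ terms with consistent signs — in particular when $P_M$ appears in some $\mathbf P_k$ with multiplicity greater than one — and that $P_{\max}=P_M+Q_M$. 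This is a tensor-level rerun of the maximum-of-the-support analyses in Lemma \ref{Pwedge} and Proposition \ref{wedgeindependent}, but the interplay between the ``first factor'' condition defining $\mathrm{pr}$ and the signed permutation sum defining $x_{\mathbf P_k}$ demands care.
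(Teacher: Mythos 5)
Your approach is genuinely different from the paper's, and substantially longer. The paper's proof is a one-paragraph equivariance argument: it defines the signed right $S_p$-action on $S^{\otimes p}\otimes T$ sending $P_1\otimes\cdots\otimes P_p\otimes Q$ to $sgn(\sigma)P_{\sigma(1)}\otimes\cdots\otimes P_{\sigma(p)}\otimes Q$, notes that (a) $\mathrm{Im}(\iota)$ consists of fixed points of this action, (b) $x_{\mathbf P}\cdot\sigma=x_{\mathbf P\cdot\sigma}$, and (c) the $x_{\mathbf P}$ are linearly independent (Lemma \ref{independent}). Given these three facts, any $v\in\iota(\ker\delta)\cap W$ has a unique expansion in the $x_{\mathbf P}$ whose coefficients are forced to be constant along each $S_p$-orbit, and the orbit sums are precisely the $\iota(x_A)$ by the previous lemma. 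That's the whole proof. Your inductive projection-onto-the-maximal-slice argument avoids none of the prerequisites (you still invoke Lemma \ref{independent} for the support/maximum claim) but replaces the clean equivariance step with an induction on $\max_\le\mathrm{supp}(w)$.

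There is a genuine gap in your argument at the step ``write $v=\sum_k c_k\,x_{\mathbf P_k}$ with each $\mathbf P_k$ listed with a $\le$-largest entry first.'' This is not a harmless relabeling: $x_{P_1,\ldots,P_p}$ genuinely depends on the ordering of the sequence (not even up to sign), and the $x_{\mathbf P}$ over all orderings are linearly independent by Lemma \ref{independent}. So the unique expansion of $v$ will in general have nonzero coefficients on sequences that are \emph{not} max-first, and you cannot simply ``reorder'' them away. The consequence is concrete: if some $\mathbf P_k$ with $c_k\neq 0$ is not max-first, then with $P_M$ defined as the overall maximal entry, the term $x_{\mathbf P_k}$ contributes tensors whose first factor is strictly below $P_M+Q_M$ but whose later factors may be $\ge P_M+Q_M$, and your identification $P_{\max}=P_M+Q_M$ together with the clean factorization $\mathrm{pr}(x_{\mathbf P_k})=\varepsilon(P_M+Q_M)\otimes x'_{\mathbf P'_k}$ no longer hold.

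The gap can be filled, but doing so reveals that the paper's route is shorter: since $v=\iota(w)$ is fixed by the signed $S_p$-action, the supports $\mathrm{supp}_j(v)$ agree for all $j$; combined with Lemma \ref{independent} this gives $\max_k P_{1,k}=\max_{j,k}P_{j,k}$, i.e.\ the maximal first entry \emph{is} the overall maximal entry. Defining $P_M:=\max_k P_{1,k}$, your projection formula then works as stated: $\mathrm{pr}(x_{\mathbf P_k})=0$ unless $P_{1,k}=P_M$, and in that case $\mathrm{pr}(x_{\mathbf P_k})=\varepsilon(P_M+Q_M)\otimes x'_{\mathbf P'_k}$. But once you notice that $v$ is $S_p$-invariant, you might as well use that invariance directly, as the paper does, rather than running it through the support machinery. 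One more minor point: your induction on $\max_\le\mathrm{supp}(w)$ requires the relevant values to lie in a set without infinite strictly decreasing chains; this is fine when $S$ is finite, which is the case here, but is worth saying.

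The rest of your write-up is sound: the reduction $W\cap\iota(\ker\delta)=W\cap\mathrm{Im}(\iota)$ (using injectivity of $g$), the strengthening to an arbitrary $X_0\subseteq X$ so the $(p-1)$-induction step stays inside the same variable set, the identification of the $T'$-side, and the final deduction of the ``in particular'' clause are all correct and carefully tracked.
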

\begin{proof}
We have a right group action of $S_p$ on $S^{\otimes p}\otimes T$, restricting to one on $\bigcap_{i=1}^p\ker\delta_i$: any $\sigma\in S_p$ maps $P_1\otimes\ldots\otimes P_p\otimes Q$ to $sgn(\sigma)P_{\sigma(1)}\otimes\ldots\otimes P_{\sigma(p)}\otimes Q$. Clearly any element of $\iota(\ker\delta)$ is fixed by this action. The action of $S_p$ on the set of sequences $P_1,\ldots,P_p$ in $X$ from the previous lemma is compatible with the action on $S^{\otimes p}\otimes T$ in the sense that $x_{P_1,\ldots,P_p}\cdot\sigma=x_{P_{\sigma(1)},\ldots,P_{\sigma(p)}}$. Choose an element $R_j=P_{1,j},\ldots,P_{p,j}$ out of each orbit of the action on sequences.\newline Consider an element $x$ of $\iota(\ker\delta)$ that is a linear combination of the $x_{P_1,\ldots,P_p}$. We prove that it is generated by the $\iota(x_A)$. Write it as a linear combination of the
$x_{P_1,\ldots,P_p}$:
$$x=\sum_j\sum_{\bar{\sigma}\in\text{Stab}(R_j)\backslash S_p}\lambda_{j,\bar{\sigma}}x_{P_{\sigma(1),j},\ldots,P_{\sigma(p),j}}.$$
Applying the action on $S^{\otimes p}\otimes T$ to this expression permutes the $\lambda_{j,\bar{\sigma}}$, for each $j$. Since $x$ is fixed by the action of $S_p$ and the $x_{P_1,\ldots,P_p}$ are linearly independent (by lemma \ref{independent} below), $\lambda_{j,\bar{\sigma}}$ doesn't depend on $\sigma$. So $x$ is a linear combination of the
$$\sum_{\bar{\sigma}\in\text{Stab}(R_j)\backslash S_p}x_{P_{\sigma(1),j},\ldots,P_{\sigma(p),j}}=\iota(x_A),$$
where $A=P_{1,j}\ldots P_{p,j}$. Note that we used the previous lemma in the last equality. This proves the first assertion. The second assertion follows from the first, the injectivity of $\iota$ and proposition \ref{wedgeindependent}.
\end{proof}
Having established a connection between the $\bigwedge^p S\otimes T$ context and the $S^{\otimes p}\otimes T$ context, we now focus on the latter.
\begin{definition}
For any $x\in\bigcap_{i=1}^p\ker\delta_i$ and $i\in\{1,\ldots,p\}$ we define $supp_i(x)$ to be the convex hull of the set of lattice points occurring in the $i$-th factor of some term of $x$.
\end{definition}
The following lemma is analogous to lemma \ref{Pwedge}.
\begin{lemma}\label{Ptensor}
Let $x\in\bigcap_{i=1}^p\ker\delta_i$ and let $\leq$ be a lattice pre-order on $\ZZ^n$. Fix an $i$ and suppose $P\in supp_i(x)$ is strictly greater than any other point of $supp_i(x)$. Let $T'$ be the set of non-maximal points in $T$. Let $\delta_1',\ldots,\delta_{p-1}':$ $S^{\otimes(p-1)}\otimes T'\rightarrow S^{\otimes(p-2)}\otimes(S+T')$ be defined analogously to $\delta_1,\ldots,\delta_p$. Write
$$x=\sum_j\lambda_j P_{1,j}\otimes\ldots\otimes P(\textup{place }i)\otimes\ldots\otimes P_{p,j}\otimes Q_j+\textup{terms not having }P\textup{ at place }i$$
$$y=\sum_j\lambda_j P_{1,j}\otimes\ldots\otimes P_{p,j}\otimes Q_j\textup{ (with }P\textup{ removed from place }i).$$
Then $y\in\bigcap_{j=1}^{p-1}\ker\delta_j'$.
\end{lemma}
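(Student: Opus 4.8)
\emph{Proof proposal.} The plan is to mirror the proof of Lemma~\ref{Pwedge}, running the same cancellation argument one tensor slot at a time instead of inside a wedge power; this should even be slightly cleaner here, since the maps $\delta_i$ carry no signs. Two things must be checked: first, that $y$ really lands in $S^{\otimes(p-1)}\otimes T'$, i.e.\ that every $Q_j$ appearing in the $P$-at-place-$i$ part of $x$ is non-maximal in $T$; and second, that $\delta'_m(y)=0$ for every $m\in\{1,\dots,p-1\}$.

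For the first point I would argue by contradiction, exactly as in Lemma~\ref{Pwedge}. Suppose some $Q_j$ occurring (with $P$ at place $i$) is maximal for $\leq$, and look at the terms of $\delta_i(x)$ whose last factor equals $P+Q_j$. Such a term arises from a term $P_{1,j'}\otimes\dots\otimes P_{p,j'}\otimes Q_{j'}$ of $x$ by deleting its place-$i$ entry, so it requires $P_{i,j'}+Q_{j'}=P+Q_j$; since $P_{i,j'}\in supp_i(x)$ we have $P_{i,j'}\le P$, and since $Q_j$ is maximal we have $Q_{j'}\le Q_j$, so the key pre-order property forces $P_{i,j'}\sim P$ and $Q_{j'}\sim Q_j$. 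As $P$ is the unique maximum of $supp_i(x)$ this gives $P_{i,j'}=P$, hence also $Q_{j'}=Q_j$. Thus all these terms of $\delta_i(x)$ come from the $P$-at-place-$i$ terms of $x$ with $T$-factor $Q_j$, of which there is at least one; as $x$ is written without redundant terms these remain pairwise distinct pure tensors after deleting the (constant) place-$i$ entry, so $\delta_i(x)$ would contain a non-empty sum of distinct pure tensors with the non-zero coefficients $\lambda_{j'}$, contradicting $\delta_i(x)=0$. Hence every such $Q_j$ lies in $T'$, and $y$ is in the domain of the $\delta'_m$.

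For the second point, fix $m$ and let $m'\in\{1,\dots,p\}$ be the slot of $x$ corresponding to $m$ (so $m'=m$ if $m<i$ and $m'=m+1$ if $m\ge i$), noting $m'\ne i$. Applying $\delta_{m'}$ to $x$ does not touch the place-$i$ entry, so the image of the $P$-at-place-$i$ part of $x$ still has $P$ at the relevant slot, while the image of the remaining part of $x$ has something $\ne P$ there (any place-$i$ entry of a term not having $P$ at place $i$ lies in $supp_i(x)$ and is $<P$, hence stays $\ne P$). These two images therefore cannot cancel against one another, so from $\delta_{m'}(x)=0$ it follows that $\delta_{m'}$ applied to the $P$-at-place-$i$ part of $x$ is already zero. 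Since deleting the place-$i$ entry is injective on the span of pure tensors having $P$ there, and since deleting two distinct slots and adding the same vector to the $T$-factor commute, this deletion intertwines $\delta_{m'}$ on the $P$-at-place-$i$ part of $x$ with $\delta'_m$ on $y$; therefore $\delta'_m(y)=0$.

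The main obstacle is the first point, the ``nothing to cancel'' step, whose only delicate ingredient is the quoted pre-order inequality together with the uniqueness of the maximum of $supp_i(x)$; once these are in place the rest is purely formal slot-bookkeeping.
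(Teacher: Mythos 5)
Your proposal is correct, and it is essentially the same argument as the paper's (which is omitted there, the author noting only that it is ``analogous to that of lemma~\ref{Pwedge}''): the same two-step structure, first using the pre-order inequality and uniqueness of the maximum of $supp_i(x)$ to show the $P$-at-place-$i$ part of $x$ has all its $T$-factors non-maximal so that $y$ lands in $S^{\otimes(p-1)}\otimes T'$, and then using that $\delta_{m'}$ for $m'\neq i$ preserves the ``$P$ at the $i$-th slot'' filtration to conclude $\delta'_m(y)=0$. In fact your treatment of the cancellation in the first step is slightly more careful than the paper's wedge version (you correctly allow for several terms ending in $\otimes(P+Q_j)$ and observe they are distinct pure tensors), and the injectivity remark in the second step is harmless but not actually needed — the intertwining alone already gives $\delta'_m(y)=0$.
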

We omit the proof since it is analogous to that of lemma \ref{Pwedge}.\vspace{4 pt}\newline
\textbf{Example.} Let $S=T=\{(1,1),(1,0),(0,1),(0,0)\}$ then
\begin{align}
x=&(1,0)\otimes(0,1)\otimes(0,0)-(1,0)\otimes(0,0)\otimes(0,1)\nonumber \\
-&(0,0)\otimes(0,1)\otimes(1,0)+(0,0)\otimes(0,0)\otimes(1,1)\nonumber
\end{align}
is in $\ker\delta_1\cap\ker\delta_2$. If we take the order coming from the linear map $L(x_1,x_2)=x_2$ then $P_M=(0,1)$ is the unique maximum of $supp_2(x)=\{(0,1),(0,0)\}$. Applying the lemma with $i=2$ we get $y=(1,0)\otimes(0,0)-(0,0)\otimes(1,0)$.
\begin{center}
\begin{tikzpicture}
\draw [<->] (1.5,0) -- (0,0) -- (0,1.5);
\draw (1,0) -- (1,1) -- (0,1);
\draw[very thick] (1,0) -- (0,0);
\fill[black] (1,0) circle (1.5 pt);
\fill[black] (0,0) circle (1.5 pt);
\node at (.5,-.3) {$supp_1(x)$};
\node at (1.8,0) {$x_1$};
\node at (0,1.8) {$x_2$};
\node at (.5,.5) {$S$};
\draw [<->] (8.5,0) -- (7,0) -- (7,1.5);
\draw (8,0) -- (8,1) -- (7,1);
\draw[very thick] (7,1) -- (7,0);
\fill[black] (7,1) circle (1.5 pt);
\fill[black] (7,0) circle (1.5 pt);
\node at (6.2,.5) {$supp_2(x)$};
\node at (8.8,0) {$x_1$};
\node at (7,1.8) {$x_2$};
\node at (7.5,.5) {$S$};
\node at (6.7,1) {\small $P_M$};
\end{tikzpicture}
\end{center}
\begin{lemma}\label{Tleqptensor}
If $p\geq\#T$ then $\bigcap_{i=1}^p\ker\delta_i=0$.
\end{lemma}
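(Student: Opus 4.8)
The plan is to mimic the proof of Theorem \ref{Tleqp}, replacing Lemma \ref{Pwedge} by its tensor analogue Lemma \ref{Ptensor}, and to argue by induction on $p$. For the base case $p=0$ the hypothesis $p\geq\#T$ forces $T=\emptyset$, and by the stated convention $\bigcap_{i=1}^0\ker\delta_i=T=0$.

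For the inductive step I would take $p\geq 1$ and suppose, towards a contradiction, that there is a nonzero $x\in\bigcap_{i=1}^p\ker\delta_i$; this forces $T\neq\emptyset$. Fix a lattice order $\leq$ on $\ZZ^n$, e.g.\ the one induced by a linear functional $\RR^n\to\RR$ whose coefficients are linearly independent over $\QQ$; since this order is antisymmetric, every finite subset of $\ZZ^n$ has a unique maximum. In particular $supp_1(x)$ has a unique maximal lattice point $P$. Applying Lemma \ref{Ptensor} with $i=1$, and letting $T'$ be the set of non-maximal points of $T$, we obtain an element $y\in\bigcap_{j=1}^{p-1}\ker\delta_j'$, where $\delta_j'\colon S^{\otimes(p-1)}\otimes T'\to S^{\otimes(p-2)}\otimes(S+T')$ is defined analogously.

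Two observations then finish the argument. First, $y\neq 0$: by definition of $supp_1$ some term of $x$ has $P$ in its first factor, so $y$ is a nonempty linear combination of pairwise distinct basis vectors of $S^{\otimes(p-1)}\otimes T$ — deleting the common first factor $P$ from distinct basis vectors of $S^{\otimes p}\otimes T$ yields distinct basis vectors — hence no cancellation occurs. Second, $\#T'=\#T-1$, since under the total order $\leq$ the finite set $T$ has a unique maximum; therefore $\#T'\leq p-1$. The induction hypothesis, applied with $p-1$ in place of $p$ and $T'$ in place of $T$, gives $\bigcap_{j=1}^{p-1}\ker\delta_j'=0$, contradicting $y\neq 0$.

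I do not expect any genuinely hard step here: all the real content has been absorbed into Lemma \ref{Ptensor}. The only points needing a moment's care are that the peeled-off element $y$ is genuinely nonzero (which reduces to the absence of cancellation when one deletes a fixed tensor slot shared by distinct basis monomials) and that $\#T$ strictly decreases, so that the induction is legitimate; both are immediate. One could equally run the induction on $\#T$ rather than on $p$, with base case $T=\emptyset$, which is a purely cosmetic choice.
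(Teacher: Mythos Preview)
Your proposal is correct and follows exactly the approach the paper intends: it simply says the proof is analogous to that of Theorem~\ref{Tleqp}, and you have spelled out that analogy using Lemma~\ref{Ptensor} in place of Lemma~\ref{Pwedge}. The one point you rightly take care over is that Lemma~\ref{Ptensor}, unlike Lemma~\ref{Pwedge}, does not assert $y\neq 0$; your argument that deleting a fixed first tensor factor from distinct basis monomials yields distinct basis monomials is the correct way to fill this in.
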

Again the proof is analogous to that of theorem \ref{Tleqp}.
\begin{lemma} \label{independent}
Let $\delta_i:S^{\otimes p}\otimes T\rightarrow S^{\otimes(p-1)}\otimes(S+T)$ be the usual maps, $p=\#T-1$. Let $x=\sum_j\lambda_jx_{P_{1,j},\ldots,P_{p,j}}$ be a linear combination with non-zero coefficients then $supp_i(x)$ is the convex hull of $\bigcup_jsupp_i(x_{P_{1,j},\ldots,P_{p,j}})$. In particular the $x_{P_1,\ldots,P_p}$ are linearly independent.
\end{lemma}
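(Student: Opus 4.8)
The plan is to mimic the proof of Proposition~\ref{wedgeindependent} almost line for line, with tensor products in place of wedge products and Lemma~\ref{Ptensor} playing the role of Lemma~\ref{Pwedge}, proceeding by induction on $p$. We may assume the sequences $(P_{1,j},\dots,P_{p,j})$ are pairwise distinct (otherwise combine terms). The base case $p=0$ means $\#T=1$: the only element is $x_{\emptyset}$, which is the unique point of $T$, so it is nonzero and there is no factor $i$ to worry about, and both assertions hold. So assume $p\ge1$ and that the lemma is known for $p-1$. As in Proposition~\ref{wedgeindependent}, it suffices to prove for each fixed $i\in\{1,\dots,p\}$ that
$$supp_i(x)=\text{conv}\Big(\bigcup_j supp_i\big(x_{P_{1,j},\dots,P_{p,j}}\big)\Big),$$
since a nontrivial relation $\sum_j\lambda_j x_{P_{1,j},\dots,P_{p,j}}=0$ would then force the empty set $supp_i(0)$ to equal the nonempty polytope on the right.

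The right-hand side is the convex hull of the finite set $\bigcup_j\{P_{i,j}+Q\mid Q\in T\}$, and the inclusion $\subseteq$ is clear term by term. For the reverse inclusion I would fix a vertex $v$ of that polytope and choose $L\colon\RR^n\to\RR$ with $L|_{\ZZ^n}$ injective such that $v$ is the strict $L$-maximum of this finite set. Let $\le$ be the induced lattice order, $Q_M=\max T$, and $P_M$ the $\le$-largest among the finitely many $P_{i,j}$; since every $P_{i,j}+Q$ has $L$-value at most $L(P_M)+L(Q_M)=L(P_M+Q_M)$ while $v$ attains the maximum, $v$ and $P_M+Q_M$ are lattice points of the same $L$-value, hence $v=P_M+Q_M$. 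So I must show $P_M+Q_M\in supp_i(x)$. Expanding each $x_{P_{1,j},\dots,P_{p,j}}$ via its sum over $S_{p+1}$, the $i$-th tensor factor of a term is $P_{i,j}+Q_{\sigma(i)}$, and by injectivity of $L|_{\ZZ^n}$ its $L$-value $L(P_{i,j})+L(Q_{\sigma(i)})$ equals the maximum $L(P_M+Q_M)$ exactly when $P_{i,j}=P_M$ and $Q_{\sigma(i)}=Q_M$, in which case the factor is literally $P_M+Q_M$. Collecting exactly these ``top'' terms, putting $T'=T\setminus\{Q_M\}$ (so $\#T'=p$, and every $P_{k,j}$ still satisfies $P_{k,j}+T'\subseteq S$), and letting $R_j$ denote the length-$(p-1)$ sequence obtained from $(P_{1,j},\dots,P_{p,j})$ by deleting the occurrence of $P_M$ in slot $i$, a sign-bookkeeping computation of the same kind as in Proposition~\ref{wedgeindependent} shows that the sum of the top terms of $x$ equals $\pm$ the result of splicing $P_M+Q_M$ into slot $i$ of $\sum_{j\,:\,P_{i,j}=P_M}\lambda_j\,x'_{R_j}$, where $x'_{R_j}$ is the element built from $R_j$ exactly as in this section but using $T'$ in place of $T$. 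The key point is that the sign $(-1)^{i+m}$ produced by this bookkeeping, with $m$ the index of $Q_M$, does not depend on $j$.

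The sequences $R_j$ arising here are pairwise distinct: two of the original sequences that both carry $P_M$ in slot $i$ and delete to the same $R$ must already be equal. Hence the induction hypothesis for $(S,T',p-1)$, valid since $p-1=\#T'-1$, gives linear independence of the $x'_{R_j}$, so $\sum_{j\,:\,P_{i,j}=P_M}\lambda_j x'_{R_j}\ne0$; splicing a fixed point into a fixed slot is injective, so the collected top part of $x$ is nonzero, and $P_M+Q_M\in supp_i(x)$ as required. Since $v$ was an arbitrary vertex of the right-hand polytope, $supp_i(x)$ contains all its vertices and hence equals it, which also yields the claimed linear independence. The step I expect to be the main obstacle is the non-cancellation of the top terms in the middle paragraph: the whole argument hinges on the collected top terms forming a nonzero scalar multiple of a spliced copy of $\sum_{j:P_{i,j}=P_M}\lambda_j x'_{R_j}$, which needs (a) the precise identification of which terms are ``top'', (b) the verification that the splicing sign is independent of $j$, and (c) the inductive linear independence of the $x'_{R_j}$. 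Everything else is a routine transcription of the wedge-product arguments already established in this section.
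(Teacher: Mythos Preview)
Your proposal is correct and is precisely the approach the paper intends: the paper's own ``proof'' is the single sentence ``One can prove this with the same technique as proposition~\ref{wedgeindependent},'' and what you have written is exactly that transcription, with the tensor slot $i$ fixed, $P_M$ taken as the $L$-maximum among the $P_{i,j}$, and the induction applied to the deleted sequences $R_j$ over $T'=T\setminus\{Q_M\}$. One small remark: your assertion that the right-hand side equals $\mathrm{conv}\bigl(\bigcup_j\{P_{i,j}+Q:Q\in T\}\bigr)$ relies on the fact that no two terms of a single $x_{P_{1,j},\dots,P_{p,j}}$ cancel, which holds because distinct $\sigma\in S_{p+1}$ give distinct tuples $(Q_{\sigma(1)},\dots,Q_{\sigma(p+1)})$; you might state this explicitly, but otherwise the argument is complete.
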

One can prove this with the same technique as proposition \ref{wedgeindependent}.
\begin{lemma}\label{suppminsupp}
Let $\Delta$ be a lattice polytope of dimension at least two with $p+1$ lattice points and let $\delta_i:(\ZZ^n)^{\otimes p}\otimes(\Delta\cap\ZZ^n)\rightarrow(\ZZ^n)^{\otimes(p-1)}\otimes\ZZ^n$ be the usual maps. Then for all $x\in\bigcap_{i=1}^p\ker\delta_i\backslash\{0\}$ and $i\in\{1,\ldots,p\}$ we have $$\Delta-\Delta\subseteq supp_i(x)-supp_i(x).$$
\end{lemma}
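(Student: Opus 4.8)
The conclusion is an inclusion of difference bodies $A-A$, hence equivalent to the inequality of widths $w_{\Delta}(L)\le w_{supp_i(x)}(L)$ for every linear functional $L$, where $w_K(L):=\max_K L-\min_K L$. As widths are continuous in $L$, it is enough to treat $L$ whose coefficients are $\QQ$-linearly independent (a dense set; such $L$ is injective on $\ZZ^n$). So I would fix such an $L$, let $Q_+,Q_-\in T$ be its maximizer and minimizer on $T$, and reduce the goal to $w_{supp_i(x)}(L)\ge L(Q_+)-L(Q_-)$.

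I would first record three remarks. (1) Every $\delta_k$ preserves the total sum $\sum_{j=1}^p P_j+Q\in\ZZ^n$ of a basis tensor, so $\bigcap_k\ker\delta_k$ is $\ZZ^n$-graded by it; since the $i$-th support of a sum is, in every direction, at least as wide as that of any graded summand, I may assume all terms of $x$ have one and the same total sum. (2) If some $Q_0\in T$ never appeared in the last slot of $x$, then $x$ would lie in $S^{\otimes p}\otimes(T\backslash\{Q_0\})$ and be annihilated by the maps $\delta_k$ attached to the $p$-element set $T\backslash\{Q_0\}$, forcing $x=0$ by Lemma \ref{Tleqptensor}; hence every point of $T$ occurs in the last slot of $x$. (3) For $k\ne i$ the map $\delta_k$ leaves the $i$-th slot untouched, so the sub-sum $x_P$ of those terms of $x$ that have $P$ in the $i$-th slot, regarded after deleting that slot in $S^{\otimes(p-1)}\otimes T$, lies in $\bigcap_{k\ne i}\ker\delta_k'$; and sorting $\delta_i(x)=0$ by the $i$-th slot gives $\sum_P(\mathrm{id}^{\otimes(p-1)}\otimes m_P)(x_P)=0$, where $m_P$ sends the last slot $Q$ to $P+Q$.

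I would then induct on $p$. In the inductive step fix a coordinate $j\ne i$ (possible since $p\ge2$) and seek a vertex $Q_0$ of $\Delta$ with $Q_0\notin\{Q_+,Q_-\}$ and $\dim\text{conv}(T\backslash\{Q_0\})\ge2$. Given such a $Q_0$, pick a lattice pre-order having $Q_0$ as its unique maximum on $T$ and generic enough that $supp_j(x)$ also has a unique maximum, and apply Lemma \ref{Ptensor} in coordinate $j$: this yields $y\in\bigcap_k\ker\delta_k'\setminus\{0\}$ over $T\backslash\{Q_0\}$ (here $y\ne0$ because the maximum of $supp_j(x)$ is a vertex and so occurs in $x$), carrying a coordinate $i'$ inherited from the $i$-th coordinate of $x$, with $supp_{i'}(y)\subseteq supp_i(x)$. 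Since $Q_0$ is not an $L$-extreme, $w_{T\backslash\{Q_0\}}(L)=w_\Delta(L)$, so the inductive hypothesis for $y$ (which uses $\dim\text{conv}(T\backslash\{Q_0\})\ge2$) gives $w_{supp_i(x)}(L)\ge w_{supp_{i'}(y)}(L)\ge w_\Delta(L)$. A brief case analysis shows a usable $Q_0$ fails to exist only when $\Delta$ is a triangle all of whose non-vertex lattice points lie on one edge and $L$ attains its extremes on $T$ at the two endpoints of that edge — a case that also contains the base $p=2$, a unimodular triangle.

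This last, genuinely two-dimensional, case is where I expect the difficulty to concentrate, and pure peeling cannot handle it: the statement is false for one-dimensional $\Delta$, so two-dimensionality must enter essentially. I would attack it via the fiber identity $\sum_P(\mathrm{id}^{\otimes(p-1)}\otimes m_P)(x_P)=0$: for an auxiliary functional, the component along the maximal lattice point $\widehat R$ of $\{\,P+Q : P\in supp_i(x),\ Q\text{ occurring in }x_P\,\}$ gives a nontrivial relation among the pieces $x_{\widehat R-Q}^{[Q]}$ ($Q\in T$), hence forces at least two points $\widehat R-Q$ into the $i$-th support of $x$, i.e.\ a nonzero element of $T-T$ into $supp_i(x)-supp_i(x)$. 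Writing the offending $T$ as $\{A\}\cup E$ with $E$ collinear and $A$ the opposite vertex, and using remark (2) that $A$ does occur in the last slot, one runs this over all auxiliary functionals; the crux — and the main obstacle — is to show that the differences so obtained already span $\text{conv}(T-T)$, i.e.\ that the off-line vertex $A$ cannot thin out $supp_i(x)$ the way it does in dimension one.
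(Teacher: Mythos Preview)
Your reduction to widths and the inductive peeling step are sound and close in spirit to the paper's Case~1: both pick a vertex of $\Delta$ that is not an $L$-extreme, cut it off via Lemma~\ref{Ptensor} applied at a slot $j\ne i$, and invoke induction on the smaller $T'$ (still of dimension $\ge 2$). Your preliminary remarks~(1)--(3) are correct, though (1) is not actually needed.

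The gap is exactly where you say it is. In the residual case---$\Delta$ a triangle whose only non-vertex lattice points lie on the edge $[Q_-,Q_+]$, so that the lone removable vertex $A$ leaves a collinear $T'$---you do not give a proof. Your fiber-identity observation does yield, for each auxiliary functional, \emph{some} nonzero element of $T-T$ inside $supp_i(x)-supp_i(x)$; but you need the specific element $Q_+-Q_-$ (the full width along the long edge), and you explicitly flag as ``the main obstacle'' that the differences you produce need not span $T-T$. This is precisely the point at which the one-dimensional counterexample shows that something genuinely two-dimensional must be used, and nothing in your sketch uses it.

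The paper's resolution of this case is substantial and quite different from your fiber approach. Normalizing the long edge to $[(0,\ldots,0),(d,0,\ldots,0)]$ and the off-line vertex to $(0,-1,0,\ldots,0)$, it applies Lemma~\ref{Ptensor} at slot $i$ itself with two carefully chosen pre-orders, coming from $L_1(x)=-x_1$ and $L_2(x)=x_1-dx_2$. For each, the set $T'$ of non-maximal points has only $p-1$ elements, so Lemma~\ref{Tleqptensor} would force the resulting $y$ to vanish---a contradiction unless neither $L_1$ nor $L_2$ attains a \emph{unique} maximum on $supp_i(x)$. Thus each of $L_1,L_2$ is maximized on $supp_i(x)$ at two or more lattice points. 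In $n=2$ a short convexity argument with those four points then produces a horizontal segment of length $d$ inside $supp_i(x)$, contradicting $v=(d,0)\notin supp_i(x)-supp_i(x)$. For $n\ge 3$ the paper constructs explicit automorphisms $\alpha_{P,Q_0}$ of $\bigcap_k\ker\delta_k$ (shifting the first tensor slot within fibers of the projection $\pi:\ZZ^n\to\ZZ^{n-2}$) to push the maxima of $L_1,L_2$ into a single two-dimensional fiber $supp_P(x)$, thereby reducing to the planar argument. None of this machinery is hinted at in your sketch, and your fiber identity does not obviously substitute for it.
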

Note that the lemma can fail if $\Delta$ is one-dimensional. If $n=1$, $\Delta=[0,2]$ and $x=v_0\otimes v_0\otimes v_2-v_0\otimes v_1\otimes v_1-v_1\otimes v_0\otimes v_1+v_1\otimes v_1\otimes v_0$ then $supp_1(x)=[0,1]$ and the conclusion of the lemma fails.
\begin{proof}
We prove this by induction on $p$. We can suppose $i=1$ without loss of generality. Let $x\in\bigcap_{i=1}^p\ker\delta_i\backslash\{0\}$ and take $v\in(\Delta-\Delta)\backslash(supp_1(x)-supp_1(x))$ with integer coordinates. Let $P_1,P_2\in\Delta\cap\ZZ^n$ with $P_2-P_1=v$. Using a unimodular transformation we can suppose $P_1=(0,0,\ldots,0)$ and $P_2=v=(d,0,\ldots,0)$ for some $d>0$.\newline
\underline{Case 1: $\Delta\backslash[P_1,P_2]$ contains more than one lattice point.}\newline Take a linear map $L:\RR^n\rightarrow\RR$ that does not attain a maximum at $P_1$ or $P_2$ on $\Delta$. We take $L$ to attain its maximum on $supp_2(x)$ at only one point. This induces a lattice pre-order $\leq$. We apply lemma \ref{Ptensor} for place 2 to obtain $S'$, $T'$ and a non-zero $y\in\bigcap_{j=1}^{p-1}\ker(\delta'_j)$ with $supp_1(y)\subseteq supp_1(x)$, so $[P_1,P_2]-[P_1,P_2]\nsubseteq supp_1(y)-supp_1(y)$.
If $\#T'=p$ we get a contradiction with the induction hypothesis and if $\#T'<p$ we get a contradiction with lemma \ref{Tleqptensor}. We needed the fact that $\Delta\backslash[P_1,P_2]$ contains more than one lattice point to ensure that $T'$ is of dimension at least two, so we can apply the induction hypothesis.\newline
\underline{Case 2: $\Delta\backslash[P_1,P_2]$ contains only one lattice point.}\newline We can suppose this lattice point is $(0,-1,0,\ldots,0)$. Note that $p=\#T-1=d+1$. Define $L_1,L_2:\ZZ^n\rightarrow\ZZ$ as follows: $L_1((x_1,x_2,\ldots))=-x_1$, $L_2((x_1,x_2,\ldots))=x_1-dx_2$. We claim that $L_1$ (resp.\ $L_2$) attains its maximum on $supp_1(x)$ at more than one lattice point of $supp_1(x)$. Indeed, applying lemma \ref{Ptensor} with $L_1$ (resp.\ $L_2$) and place 1 we get $T'=\{(1,0,0,\ldots,0),\ldots,(d,0,0,\ldots,0)\}$ (resp. $\{(0,0,\ldots,0),\ldots,(d-1,0,0,\ldots,0)\}$). In each case the $y$ we obtain leads to a contradiction with lemma \ref{Tleqptensor}, unless $L_1$ (resp.\ $L_2$) does not attain its maximum on $supp_1(x)$ at a unique point of $supp_1(x)$ (in which case we cannot apply lemma \ref{Ptensor}). Therefore $L_1$ (resp.\ $L_2$) attains its maximum at more than one lattice point of $supp_1(x)$.\newline
\underline{Case 2a: $n=2$.}\newline
Let $(x_1,y_1)$ and $(x_1,y_1+1)$ be points of $supp_1(x)$ on which $L_1$ reaches its maximum and let $(x_2,y_2)$ and $(x_2+d,y_2+1)$ be points of $supp_1(x)$ on which $L_2$ reaches its maximum. We know by maximality that $x_2\geq x_1$ and $L_2((x_1,y_1))\leq L_2((x_2,y_2))$. If $y_2\leq y_1$ then $[(x_2,y_2+1),(x_2+d,y_2+1)]\subseteq\text{supp}_1(x)$ and similarly if $y_2\geq y_1$ then $[(x_1,y_1+1),(x_1+d,y_1+1)]\subseteq\text{supp}_1(x)$. In any case we get a contradiction with the fact that $v=(d,0)\notin supp_1(x)-supp_1(x)$.\\
\begin{center}
\begin{tikzpicture}
\draw[thin] (0,-.75) -- (0,1.5);
\draw[thin] (-.75,-.75) -- (4.5,1);
\draw[thin] (0,.25) -- (-1,.25);
\draw[thin] (-1,.25) -- (-.88,.37);
\draw[thin] (-1,.25) -- (-.88,.13);
\draw[thin] (2.4,.3) -- (2.7,-.6);
\draw[thin] (2.7,-.6) -- (2.78,-.44);
\draw[thin] (2.7,-.6) -- (2.54,-.52);
\fill[black] (0,1) circle (1.5 pt);
\fill[black] (0,0.5) circle (1.5 pt);
\fill[black] (1.5,0) circle (1.5 pt);
\fill[black] (3,.5) circle (1.5 pt);
\node at (-1,.5) {\small $\nabla L_1$};
\node at (3.2,-.6) {\small $\nabla L_2$};
\node at (.6,.5) {\small $(x_1,y_1)$};
\node at (.9,1) {\small $(x_1,y_1+1)$};
\node at (1.5,-.37) {\small $(x_2,y_2)$};
\node at (3.7,.15) {\small $(x_2+d,y_2+1)$};
\end{tikzpicture}
\end{center}
\noindent\underline{Case 2b: $n\geq 3$.}\newline Let $\pi:\ZZ^n\rightarrow\ZZ^{n-2}$ be the projection that deletes the first two coordinates. So $\pi(\Delta)=0$. Now $\delta_1=\sum_{P\in\ZZ^{n-2}}\delta_P$ where $\delta_P$ maps $Q_1\otimes\ldots\otimes Q_p\otimes Q$ to
$$Q_2\otimes\ldots\otimes Q_p\otimes(Q+Q_1)\text{ if }\pi(Q_1)=P\text{ and to zero otherwise}.$$
As $\pi(Q+Q_1)=\pi(Q)+\pi(Q_1)=P$, there can't be any cancellation between $\delta_P(x)$ for different $P$. Therefore $\delta_P(x)=0$ for all $P\in\ZZ^{n-2}$.\newline
For any $P\in\ZZ^{n-2}$ and $Q_0\in\pi^{-1}(0)$ we define a linear automorphism
\begin{align}
\alpha_{P,Q_0}&:(\ZZ^n)^{\otimes p}\otimes T\longrightarrow(\ZZ^n)^{\otimes p}\otimes T\nonumber \\
&:Q_1\otimes\ldots\otimes Q_p\otimes Q\mapsto\begin{cases}
(Q_1+Q_0)\otimes\ldots\otimes Q_p\otimes Q & \text{if }\pi(Q_1)=P \\
Q_1\otimes\ldots\otimes Q_p\otimes Q & \text{else.}
\end{cases}\nonumber
\end{align}
(Recall that by $\ZZ^n$ (resp.\ $T$) we mean the vector space with $\ZZ^n$ (resp.\ $T$) as a basis. So we define the linear map on basis elements and linearly extend them over the base field.) For any $P,P'\in\ZZ^{n-2}$ and $Q_0$ we define
\begin{align}
\alpha'_{P,P',Q_0}&:(\ZZ^n)^{\otimes (p-1)}\otimes T\longrightarrow(\ZZ^n)^{\otimes (p-1)}\otimes T\nonumber \\
&:Q_1\otimes\ldots\otimes Q_{p-1}\otimes Q\mapsto\begin{cases}
Q_1\otimes\ldots\otimes Q_{p-1}\otimes(Q+Q_0) & \text{if }P=P' \\
Q_1\otimes\ldots\otimes Q_{p-1}\otimes Q & \text{else.}
\end{cases}\nonumber
\end{align}
Then $\alpha_{P,P',Q_0}'\circ\delta_{P'}=\delta_{P'}\circ\alpha_{P,Q_0}$, from which it follows that $\alpha_{P,Q_0}(x)$ is in $\ker(\delta_{P'})$ for all $P'\in\ZZ^{n-2}$ and $Q_0\in\pi^{-1}(0)$. So $\alpha_{P,Q_0}(x)\in\ker\delta_1$ and if we define
\begin{align}
\alpha''_{P,Q_0}&:(\ZZ^n)^{\otimes{p-1}}\otimes T\longrightarrow(\ZZ^n)^{\otimes{p-1}}\otimes T\nonumber \\
&:Q_1\otimes\ldots\otimes Q_{p-1}\otimes Q\mapsto\begin{cases}
(Q_1+Q_0)\otimes\ldots\otimes Q_{p-1}\otimes Q & \text{if }\pi(Q_1)=P \\
Q_1\otimes\ldots\otimes Q_{p-1}\otimes Q & \text{else,}
\end{cases}\nonumber
\end{align}
then $\alpha''_{P,Q_0}\circ\delta_i=\delta_i\circ\alpha_{P,Q_0}$ for all $i=2,\ldots, p$. Therefore $\alpha_{P,Q_0}(x)\in\bigcap_{i=1}^p\ker\delta_i$.\\

\begin{tikzpicture}
\draw[gray] (0,0) -- (.8,0) -- (1.6,0) -- (.3,-.2) -- (-1,-.4) -- (-.5,-.2) -- (0,0);
\draw[gray] (0,.6) -- (.8,.6) -- (-.5,.4) -- (0,.6);
\fill[black] (0,0) circle (1.5 pt);
\fill[black] (.8,0) circle (1.5 pt);
\fill[black] (1.6,0) circle (1.5 pt);
\fill[black] (-.5,-.2) circle (1.5 pt);
\fill[black] (.3,-.2) circle (1.5 pt);
\fill[black] (-1,-.4) circle (1.5 pt);
\fill[black] (0,.6) circle (1.5 pt);
\fill[black] (.8,.6) circle (1.5 pt);
\fill[black] (-.5,.4) circle (1.5 pt);
\fill[black] (0,1.2) circle (1.5 pt);
\node at (.3,-1) {\normalsize $supp_1(x)$};
\node at (0,.8) {\tiny $supp_P(x)$};
\end{tikzpicture}
\hspace{100 pt}
\begin{tikzpicture}
\draw[gray] (0,0) -- (.8,0) -- (1.6,0) -- (.3,-.2) -- (-1,-.4) -- (-.5,-.2) -- (0,0);
\draw[gray] (-1.5,.6) -- (-.7,.6) -- (-2,.4) -- (-1.5,.6);
\fill[black] (0,0) circle (1.5 pt);
\fill[black] (.8,0) circle (1.5 pt);
\fill[black] (1.6,0) circle (1.5 pt);
\fill[black] (-.5,-.2) circle (1.5 pt);
\fill[black] (.3,-.2) circle (1.5 pt);
\fill[black] (-1,-.4) circle (1.5 pt);
\fill[black] (-1.5,.6) circle (1.5 pt);
\fill[black] (-.7,.6) circle (1.5 pt);
\fill[black] (-2,.4) circle (1.5 pt);
\fill[black] (0,1.2) circle (1.5 pt);
\node at (.3,-1) {\normalsize $supp_1(\alpha_{P,Q_0}(x))$};
\node at (-1.5,.8) {\tiny $supp_P(\alpha_{P,Q_0}(x))$};
\end{tikzpicture}\\

For any $P\in\ZZ^{n-2}$ we define $supp_P(x)$ as the convex hull of all points in $\pi^{-1}(P)$ that occur in the first factor of some term of $x$. Of course this is at most two-dimensional. If we can prove that whenever this support is non-empty it contains more than 1 point where $L_1$ (resp.\ $L_2$) attains its maximum, then we can perform the same reasoning as in case 2a on $supp_P(x)$ to obtain a contradiction. If we choose a $Q_0$ with $L_1(Q_0)$ (resp.\ $L_2(Q_0)$) high enough, then $L_1$ (resp.\ $L_2$) will attain its maximum on $supp_1(\alpha_{P,Q_0}(x))$ only at points of $supp_P(\alpha_{P,Q_0}(x))$. This means that there are at least two points of $supp_P(\alpha_{P,Q_0}(x))$ where $L_1$ (resp.\ $L_2$) attains its maximum. Since $supp_P(\alpha_{P,Q_0}(x))=Q_0+supp_P(x)$ the same is true for $supp_P(x)$, so we are done.
\end{proof}
\begin{theorem}
If $S=\Delta'\cap\ZZ^n$, $T=\Delta\cap\ZZ^n$ and $p=\#T-1$ with $\Delta,\Delta'$ convex and $\Delta$ a bounded lattice polytope of dimension greater than one, then the expressions $x_{P_1,\ldots,P_p}$ with $P_i\in X:=\{P|P+T\subseteq S\}$ are a basis of $\bigcap_{i=1}^p\ker\delta_i$ and hence the $x_A$ for monomials $A$ of degree $p$ with variables in $X$ are a basis of $\ker\delta$.
\end{theorem}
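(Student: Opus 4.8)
By Lemma~\ref{independent} the $x_{P_1,\ldots,P_p}$ with all $P_i\in X$ are linearly independent, so it is enough to prove that they \emph{span} $K:=\bigcap_{i=1}^p\ker\delta_i$; the assertion about the $x_A$ then follows from Lemma~\ref{tensortowedge}. I would argue by induction on $p$. The cases $p\le 1$ are elementary and do not need $\dim\Delta\ge 2$: for $p=0$ we have $K=T$, one-dimensional and spanned by the empty product, and for $p=1$ a direct bookkeeping of the relation $\delta_1(\sum\lambda_{P,Q}\,P\otimes Q)=\sum\lambda_{P,Q}(P+Q)=0$ shows that $\ker\delta_1$ is spanned by the $x_P=(P+Q_1)\otimes Q_2-(P+Q_2)\otimes Q_1$ with $P\in X$. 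So assume $p\ge 2$.

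Fix a linear functional $L\colon\RR^n\to\RR$ that is injective on $\ZZ^n$, inducing a lattice order $\le$, and choose it so that its maximum on $\Delta$ is attained at a vertex $v$ for which $\mathrm{conv}(T\setminus\{v\})$ is still at least two-dimensional; such $v$ exists as soon as $\#T\ge 4$ (if $\Delta$ has $\ge 4$ vertices, any of them works; if $\Delta$ is a triangle, then at most one of its vertices can fail, for otherwise all non-vertex lattice points of $\Delta$ would have to lie on two distinct edges, hence at their common vertex). When $\#T=3$, so that $\Delta$ is a primitive triangle and $p=2$, we allow $\mathrm{conv}(T\setminus\{v\})$ to be a segment and will only invoke the already established case $p=1$. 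Now take $x\in K\setminus\{0\}$, let $\hat P:=\max_{\le}supp_1(x)$ (a lattice point, hence in $S$) and let $Q_M:=\max_{\le}T$. By Lemma~\ref{Ptensor} applied at place $1$ we obtain $T'=T\setminus\{Q_M\}$, $S'=S$, and a nonzero $y\in\bigcap_{j=1}^{p-1}\ker\delta'_j$ with $\#T'=p$ and (by our choice of $v$) either $\dim\mathrm{conv}(T')\ge 2$ or $p-1=1$. By the induction hypothesis $y=\sum_k\mu_k\,x_{P'_{1,k},\ldots,P'_{p-1,k}}$ with the $P'_{i,k}$ in $X':=\{P:P+T'\subseteq S\}$.

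The heart of the argument is the claim that in fact $P^\ast:=\hat P-Q_M\in X$ and every $P'_{i,k}\in X$, so that $\tilde x:=\sum_k\mu_k\,x_{P^\ast,P'_{1,k},\ldots,P'_{p-1,k}}$ is a legitimate element of $\mathrm{span}\{x_{P_1,\ldots,P_p}:P_i\in X\}$. Granting this, a short computation with permutations---the same one showing that $x_{P_1,\ldots,P_p}$ peels at place $1$ (with respect to $L$) to $\pm x_{P_2,\ldots,P_p}$---gives that $\tilde x$ peels at place $1$ to $\pm y$ with a sign independent of $k$, and that $supp_1(\tilde x)=P^\ast+\Delta$ has $\hat P$ as its unique $\le$-maximum. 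Hence for the right sign $\varepsilon$ the element $x-\varepsilon\tilde x$ still lies in $K$ but no term of it has $\hat P$ in the first slot; that is, $x-\varepsilon\tilde x$ has strictly smaller leading term in the term order obtained by reading off the iterated ``maximum of the first slot'' of Lemma~\ref{Ptensor}. Since only finitely many monomials occur in the finitely many lattice points of the relevant supports, descending induction on this leading term shows $x\in\mathrm{span}\{x_{P_1,\ldots,P_p}\}$, which finishes the induction.

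The step I expect to be the main obstacle is precisely the claim $P^\ast,P'_{i,k}\in X$. The tool for it is Lemma~\ref{suppminsupp}: from $\Delta-\Delta\subseteq supp_1(x)-supp_1(x)$ together with the facts that $\hat P$ is the $\le$-maximum of $supp_1(x)$ and that $supp_1(y)\subseteq supp_2(x)$ is, by Lemma~\ref{independent} applied to $y$, a convex union of translates of $\mathrm{conv}(T')$, one has to upgrade the difference-body inequality to the genuine containment $\hat P-Q_M+\Delta\subseteq supp_1(x)\subseteq\mathrm{conv}(S)$, and likewise (applying Lemma~\ref{suppminsupp} to $y$ and using $supp_j(y)\subseteq supp_{j+1}(x)$) to place each $P'_{i,k}$ in $X$. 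This passage from ``wide enough difference body'' to ``actually contains a translate of $\Delta$'' is the delicate point, and it is exactly here that the hypothesis $\dim\Delta\ge 2$ is used: Lemma~\ref{suppminsupp} is false in dimension one, and correspondingly the statement fails there.
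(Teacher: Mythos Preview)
Your skeleton is essentially the paper's: induct on $p$, peel off the first tensor slot via Lemma~\ref{Ptensor}, expand the peeled element $y$ by induction, lift back to an element $\tilde x$, and descend on the $\le$-maximum of $supp_1$. The divergence, and the genuine gap, is exactly where you locate it: the claim that $P^\ast=\hat P-Q_M$ and the $P'_{i,k}$ lie in $X$ rather than merely in $\ZZ^n$ (resp.\ in $X'$).

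The ``upgrade'' you propose via Lemma~\ref{suppminsupp} does not go through. The difference-body containment $\Delta-\Delta\subseteq C-C$ together with $\hat P=\max_\le C$ and $Q_M=\max_\le\Delta$ does \emph{not} force $\hat P-Q_M+\Delta\subseteq C$: take $\Delta=[0,1]^2$, $C=\mathrm{conv}\{(0,0),(2,0),(0,2)\}$ and $L(x,y)=x+(1+\epsilon)y$; then $[-1,1]^2\subseteq C-C$, $Q_M=(1,1)$, $\hat P=(0,2)$, but $\hat P-Q_M+\Delta=[-1,0]\times[1,2]\not\subseteq C$. Of course $supp_1(x)$ is not an arbitrary convex body, but the extra structure you would need to exploit---that it is already a convex hull of translates of $\Delta$---is precisely the theorem you are proving. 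The situation for the $P'_{i,k}$ is worse: induction only places them in $X'=\{P:P+T'\subseteq S\}\supsetneq X$, and nothing you have written forces $P'_{i,k}+Q_M\in S$.

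The paper's way around this is a single clean reduction: first prove the statement for $\Delta'=\RR^n$, where $X=\ZZ^n$ and membership is free. In that setting Lemma~\ref{suppminsupp} is used for something much weaker. After a unimodular transformation putting $Q_M=(x_M,0,\ldots,0)$ with all other points of $T$ having smaller first coordinate, and after translating so that $supp_1(x)\subseteq(\RR_{\ge 0})^n$, the containment $\Delta-\Delta\subseteq supp_1(x)-supp_1(x)$ merely guarantees that the first coordinate of $\hat P$ is at least $x_M$; hence $P^\ast=\hat P-Q_M\in(\ZZ_{\ge 0})^n$, which is exactly what makes the lexicographic descent on $(\ZZ_{\ge 0})^n$ well-founded (this also handles termination when $\Delta'$ is unbounded, which your finiteness remark does not). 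Once the $\RR^n$ case is established, the general $\Delta'$ follows in one line from Lemma~\ref{independent}: if $x$ has $supp_i(x)\subseteq\Delta'$ for all $i$, then so does each $x_{P_1,\ldots,P_p}$ appearing in its expansion, whence $P_i+\Delta\subseteq\Delta'$, i.e.\ $P_i\in X$. This two-step structure---first drop the constraint, then reimpose it via Lemma~\ref{independent}---is the missing idea.
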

\begin{proof}
Let $H$ be the set of all bounded convex lattice polytopes in $\ZZ^n$ that are either of dimension greater than one or have just two lattice points. By lemma \ref{tensortowedge} we only have to prove the first statement. In fact we only have to prove that the $x_{P_1,\ldots,P_p}$ generate $\bigcap_{i=1}^p\ker\delta_i$ by lemma \ref{independent}. We prove it for all $\Delta\in H$ by induction on $p=\#\Delta\cap\ZZ^n-1$.\newline
Suppose first $p=1$, then $T$ has just two points and we have to show that the kernel of $\delta_1:S\otimes T\rightarrow S+T$ is generated by expressions of the form $(P+Q_1)\otimes Q_2-(P+Q_2)\otimes Q_1$ where $T=\{Q_1,Q_2\}$ and $P\in X$. Consider the map $f:S\times T\rightarrow S+T$ of sets given by addition of lattice points, then every point $P'$ of $S+T$ is reached by at most two elements of $S\times T$ namely $(P+Q_1,Q_2)$ and $(P+Q_2,Q_1)$ with $P=P'-Q_1-Q_2$. We can write $S\otimes T$ as the direct sum of the linear span of each $f^{-1}(P)$ with $P\in S+T$. The kernel of $\delta_1$ is the direct sum of the kernels of $\delta_1$ restricted to each span of $f^{-1}(P)$. The result easily follows.
\newline Now for the induction step suppose $p\geq 2$. Let $Q_M$ be any extreme point of $\Delta$ such that $\text{conv}(\Delta\cap\ZZ^n\backslash\{Q_M\})\in H$.
Using some unimodular transformations one can squeeze $\Delta$ into $(\RR_{\geq 0})^n$ in such a way that $Q_M=(x_M,0,\ldots,0)$ and all other points of $\Delta$ have first coordinate smaller than $x_M$. We can also make sure that the smallest first coordinate in $\Delta$ is zero.\newline
(One can do all this as follows: first one chooses a linear form $L:\ZZ^n\rightarrow \ZZ$ that attains its maximum on $\Delta$ only at $Q_M$. One can choose $L$ with integer coefficients with no prime factors that they all share. One then chooses a unimodular transformation $U_1:\ZZ^n\overset{\cong}{\rightarrow}\ZZ^n$ whose first component is $L$. Then $U_1(Q_M)$ has its first coordinate greater than that of any other point of $U_1(\Delta)$. Next one chooses a unimodular transformation $U_2$ of the form $(x_1,\ldots,x_n)\mapsto(x_1,x_2-a_2x_1,\ldots,x_n-a_nx_1)$ with $a_2,\ldots,a_n$ large enough so that all the other coordinates of $U_2(U_1(Q_M))$ are smaller than those of the other points of $U_2(U_1(\Delta))$. Finally one uses a translation to map $U_2(U_1(Q_M))$ to $(x_M,0,\ldots,0)$ where $x_M$ is the greatest first coordinate on $U_2(U_1(\Delta))$ minus the smallest.)\newline
\underline{Claim:} It is enough to prove the statement in the case where $\Delta'=\RR^n$.\newline
\underline{Proof.} Suppose it is true for $\Delta'=\RR^n$, we prove it for arbitrary $\Delta'$. If $x\in\bigcap_{i=1}^p\ker\delta_i$ then it is a linear combination of some $x_{P_1,\ldots,P_p}$. By lemma \ref{independent} their supports are contained in the supports of $x$, hence in $\Delta'$.\newline
Henceforth we assume $\Delta'=\RR^n$. We put the lexicographical ordering on $(\ZZ_{\geq 0})^n$, meaning $(x_1,\ldots,x_n)<(x_1',\ldots,x_n')$ if for the smallest $i$ with $x_i\neq x_i'$ we have $x_i<x_i'$.\newline So suppose there exists an $x\in\bigcap_{i=1}^p\ker\delta_i$ that is not a linear combination of the $x_{P_1,\ldots,P_p}$. We can translate the first factor so that $supp_1(x)\subseteq(\RR_{\geq 0})^n$. We take $x$ so that the lexicographic maximum of $supp_1(x)$ is minimal. (We can do this because there are no lexicographic infinite descents in $(\ZZ_{\geq 0})^n$.) We will find a contradiction. Let $P'_M$ be the maximum of $supp_1(x)$ and $e$ its first coordinate. Let $Q_m\in\Delta\cap\ZZ^n$ be some point with first coordinate zero. By lemma \ref{suppminsupp} $Q_M-Q_m\in\Delta-\Delta\subseteq supp_1(x)-supp_1(x)$. It follows that $e\geq x_M$. So $P_M:=P'_M-Q_M\in(\RR_{\geq 0})^n$ because its first coordinate $e-x_M$ is $\geq 0$ and all the other coordinates are equal to those of $P'_M$.
\begin{center}
\begin{tikzpicture}
\draw [<->] (3,0) -- (0,0) -- (0,2);
\draw [thin] (2.5,1) -- (2.5,.5) -- (1,0) -- (0,.5) -- (0,1) -- (1.5,1.8) -- (2.5,1);
\draw[gray, thin] (2.5,.5) -- (2.5,0);
\fill[black] (2.5,1) circle (1.5 pt);
\node at (3.2,0) {$x_1$};
\node at (0,2.2) {$x_2$};
\node at (1.25,.85) {$supp_1(x)$};
\node at (2.9,1) {$P_M'$};
\node at (2.5,-.2) {$e$};
\end{tikzpicture}
\hspace{100 pt}
\begin{tikzpicture}
\draw [<->] (3,0) -- (0,0) -- (0,2);
\draw [thin] (2,0) -- (0,.5) -- (.5,1.6) -- (1.7,1.4) -- (2,0);
\fill[black] (2,0) circle (1.5 pt);
\fill[black] (0,.5) circle (1.5 pt);
\node at (3.2,0) {$x_1$};
\node at (0,2.2) {$x_2$};
\node at (1.05,.8) {$\Delta$};
\node at (2.3,.2) {$Q_M$};
\node at (-.3,.5) {$Q_m$};
\node at (2,-.2) {$x_M$};
\end{tikzpicture}
\end{center}
We now apply lemma \ref{Ptensor} to $x$ to obtain $y\in\bigcap_{i=1}^{p-1}\ker\delta_i'$ where $\delta_i':(\ZZ^n)^{\otimes(p-1)}\otimes T'\rightarrow(\ZZ^n)^{\otimes(p-2)}\otimes\ZZ^n$ are the usual maps and where $T'=T\backslash\{Q_M\}$. This $y$ satisfies
$$x=P'_M\otimes y\text{ plus terms whose first factor is }<P'_M.$$
By induction
$$y=\sum_j\lambda_jy_{P_{1,j},\ldots,P_{p-1,j}},\text{ for some }P_{i,j}\in\ZZ^n.$$
Using the fact that $x_{P_M,P_{1,j},\ldots,P_{p-1,j}}=(P_M+Q_M)\otimes y_{P_{1,j},\ldots,P_{p-1,j}}$ plus terms whose first factor is smaller than $P_M+Q_M=P'_M$ we see that $P'_M$ is the maximum of $supp_1(x')$ where $$x'=\sum_j\lambda_jx_{P_M,P_{1,j},\ldots,P_{p-1,j}}.$$ In $x-x'$ the terms with $P'_M$ cancel so the maximum of $supp_1(x-x')$ is smaller than $P'_M$, contradicting the minimal choice of $x$. (The fact that $P_M\in(\RR_{\geq 0})^n$ is important because it ensures that $supp_1(x-x')\subseteq(\RR_{\geq 0})^n$.)\newline The last assertion of the theorem follows from lemma \ref{tensortowedge}.
\end{proof}
\begin{corollary} \label{corollary}
Let $\Delta$ and $\Delta'$ be convex lattice polytopes with $\Delta$ of dimension greater than 1 and $T=\Delta\cap\ZZ^n$, $S=\Delta'\cap\ZZ^n$ and $p=\#T-1$. If $$\delta:\bigwedge^pS\otimes T\rightarrow\bigwedge^{p-1}S\otimes(S+T)$$ is the usual map then the dimension of $\ker\delta$ is $\binom{p+\#X-1}{p}$ where $X=\{P|P+T\subseteq S\}$.
\end{corollary}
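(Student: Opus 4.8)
The plan is to deduce this directly from the theorem immediately preceding it, which asserts that the elements $x_A$, as $A$ ranges over the monomials of degree $p$ in the variables indexed by $X$, form a $k$-basis of $\ker\delta$. The hypotheses of the corollary ($\Delta$ convex of dimension greater than one, $\Delta'$ convex, $T=\Delta\cap\ZZ^n$, $S=\Delta'\cap\ZZ^n$, $p=\#T-1$) are literally those of the theorem, so the theorem applies verbatim; moreover the construction of the $x_A$ was arranged to work over an arbitrary field, so $\dim_k\ker\delta$ equals the number of monomials of degree $p$ in $m:=\#X$ variables, independently of $k$. Thus the entire content of the corollary is a combinatorial count.

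Next I would carry out that count: monomials of degree $p$ in $m$ variables correspond bijectively to weak compositions of $p$ into $m$ parts, and the classical stars-and-bars argument gives $\binom{p+m-1}{m-1}=\binom{p+m-1}{p}$ of them. Substituting $m=\#X$ yields $\binom{p+\#X-1}{p}$, which is exactly the claimed formula for $\dim_k\ker\delta$.

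Finally I would dispose of the degenerate case $\#X=0$, where there are no variables and hence no monomials of positive degree. Here one should note that $\dim\Delta>1$ forces $\#T\geq 3$, so $p=\#T-1\geq 2\geq 1$; thus there are no monomials, $\ker\delta=0$, and this agrees with the formula since $\binom{p-1}{p}=0$ under the usual convention $\binom{a}{b}=0$ for $0\le a<b$. There is no real obstacle in this proof: it is a one-line reduction to the theorem followed by a standard binomial identity; the only points requiring a moment's care are the stars-and-bars bookkeeping (one uses $\#X-1$ bars and $p$ stars) and the consistency of the empty-$X$ boundary case with the binomial coefficient convention.
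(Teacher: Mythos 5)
Your proof is correct and is essentially the paper's own argument: the preceding theorem gives that the $x_A$ for degree-$p$ monomials in the variables $X$ form a basis of $\ker\delta$, and counting these monomials by stars and bars gives $\binom{p+\#X-1}{p}$. Your extra check of the $\#X=0$ boundary case is a harmless bit of added care that the paper leaves implicit.
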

This follows because the number of degree $p$ monomials with variables in $X$ is $\binom{p+\#X-1}{p}$.
We end with the case when $\Delta$ is 1-dimensional. This time the formula works for all $p$.
\begin{theorem} \label{1dim}
Let $\Delta=\textup{conv}((0,0,\ldots,0),(d,0,\ldots,0))$ with $d\geq 0$ and let $\Delta'\subseteq\ZZ^n$ a bounded convex lattice polygon, then for all $p\leq d+1$ the dimension of the kernel of the usual map $\delta$ is $(d-p+1)\binom{\#X}{p}$ where $X=\{P|P+\{(0,\ldots,0),(1,0,\ldots,0)\}\subseteq S\},$ $S=\Delta'\cap\ZZ^n$.
\end{theorem}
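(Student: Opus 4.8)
The plan is to follow the template of the proof of the previous theorem: produce explicitly enough elements of $\ker\delta$, prove they are linearly independent by a support argument, and prove they span by an induction that peels off the largest lattice point occurring in the wedge part. Put $e_1:=(1,0,\dots,0)$, so that $X=\{P\mid P,\,P+e_1\in S\}$. The case $p=d+1$ is immediate (then $\#T=d+1=p$, so $\delta$ is injective by Theorem \ref{Tleqp}, and $(d-p+1)\binom{\#X}{p}=0$), so assume $p\le d$. To a $p$-element subset $U=\{P_1,\dots,P_p\}\subseteq X$ and a pair of non-negative integers $(a,b)$ with $a+b=d-p$ I would attach an element $z_{U;a,b}\in\ker\delta$ whose $T$-factor only involves the window $\{ae_1,(a+1)e_1,\dots,(a+p)e_1\}\subseteq T$ of length $p$ (note $d-b=a+p$): it is an alternating sum over $S_{p+1}$, modelled on the elements $x_A$ of the previous section, but with the translates of the $P_i$ only ever shifted by $0$ or $e_1$. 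This ``staggering'' is exactly what is needed in the one-dimensional case, where $P_i+T\not\subseteq S$ in general whereas $P_i+\{0,e_1\}\subseteq S$ holds by definition of $X$; for $p=1$ one simply gets $z_{P;a,d-1-a}=(P+e_1)\otimes ae_1-P\otimes(a+1)e_1$. As in the lemma on $x_A$, one checks by a count of permutations that $z_{U;a,b}$ has integer coefficients and that $\delta(z_{U;a,b})=0$, the terms cancelling in pairs differing by a transposition. This yields $(d-p+1)\binom{\#X}{p}$ elements.

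For linear independence I would argue, by an induction in the style of Proposition \ref{wedgeindependent}, that for any lattice order the support of a linear combination of the $z_{U;a,b}$ is the convex hull of the union of the supports of those appearing with a non-zero coefficient; in particular such a combination is non-zero, so the $z_{U;a,b}$ are linearly independent and $\dim\ker\delta\ge(d-p+1)\binom{\#X}{p}$.

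For the reverse inequality I would induct on $p$, the case $p=0$ being $\ker\delta=T$ of dimension $d+1$. Suppose some $x\in\ker\delta$ is not in the span of the $z_{U;a,b}$, and among all such choose one whose support has least maximum, for a fixed lattice order with $de_1$ the maximum of $T$ and $\operatorname{supp}(x)$ having a unique maximum $P_M$. Applying Lemma \ref{Pwedge} peels off $P_M$ and produces a non-zero $y$ in the kernel of the analogous map $\delta'$ for the segment $\operatorname{conv}(0,(d-1)e_1)$ with $p-1$ wedge factors — here $(d-1)-(p-1)=d-p$ is unchanged, and since $P_M$ is a vertex of $\operatorname{supp}(x)$ the new ground set $\operatorname{supp}(x)\setminus\{P_M\}$ is again the lattice-point set of a convex polytope, still contiguous along each line parallel to $e_1$. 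By the induction hypothesis $y$ is a combination of the corresponding lower elements $z'$; since peeling $z_{U;a,b}$ at its largest wedge factor returns, up to sign, a single such $z'$, one lifts $y$'s decomposition to a combination $x'$ of the $z_{U;a,b}$ whose $P_M$-part equals $P_M\wedge y$. Then $x-x'$ contains no $P_M$ in the wedge part, so its support has strictly smaller maximum, while $x-x'$ is still not in the span of the $z_{U;a,b}$ — contradicting the minimality of $x$. Hence the $z_{U;a,b}$ span $\ker\delta$, and the two bounds give the formula.

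I expect the main obstacle to be this lifting step: one must first pin down the explicit alternating-sum formula for $z_{U;a,b}$, with its signs and integrality, and then verify that peeling off the largest wedge factor interacts with the two ends of the segment $T$ in such a way that $y$'s decomposition lifts to $z_{U;a,b}$'s with the window extended on the correct side, so that the $P_M$-parts really cancel. Alternatively one can run the whole argument one level down, in the $\bigcap_i\ker\delta_i$ picture (Lemmas \ref{Ptensor}, \ref{Tleqptensor}, \ref{tensortowedge}), once the ``monomial'' construction is extended to the range $\#T>p+1$.
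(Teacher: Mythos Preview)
Your overall architecture matches the paper's: construct explicit kernel elements indexed by $p$-subsets of $X$ together with a shift parameter, prove linear independence via a support argument, and prove spanning by peeling off the maximal point $P_M$ with Lemma~\ref{Pwedge} and inducting. (The paper inducts on $\#S$ rather than on $p$, but your secondary minimisation on the support maximum achieves the same effect.)

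There are, however, two concrete gaps. First, the explicit elements are \emph{not} alternating sums over $S_{p+1}$ modelled on the $x_A$: if you plug the window $\{ae_1,\dots,(a+p)e_1\}$ into the $x_A$ formula, the wedge factors become $P_i + je_1$ with $0 \le j \le p$, and $X$ does not guarantee these lie in $S$. The correct construction (which agrees with yours only at $p=1$, where $|S_2|=2=2^1$) is a sum over the hypercube $\{0,e_1\}^p$,
\[
\sum_{i_1,\dots,i_p \in \{0,e_1\}} (-1)^{i_1+\dots+i_p}\,(P_1+i_1)\wedge\dots\wedge(P_p+i_p)\otimes\bigl(Q-\textstyle\sum_j i_j\bigr),
\]
with $Q = ce_1$ for $c \in \{p,\dots,d\}$; it has $2^p$ terms, each wedge factor shifted by $0$ or $e_1$ as you want, and $\delta$ annihilates it because terms pair off at adjacent hypercube vertices. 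Once you have this formula, peeling off the top vertex $P_p+e_1$ does return a single lower hypercube sum up to sign, exactly as you need.

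Second, the lifting step requires $P_M - e_1 \in S$, so that $P_M - e_1 \in X$ and the lifted element indexed by $U'\cup\{P_M-e_1\}$ is actually defined. This does not follow from convexity of $\Delta'$; the paper extracts it from $\delta(x)=0$: if $P_M - e_1 \notin S$, then the terms of $\delta(x)$ in which $P_M$ has been moved past the $\otimes$ are the only ones whose $\otimes$-part agrees with $P_M$ in all coordinates except the first, so they have nothing to cancel against. You flag the lifting as the main obstacle, but this is the specific fact that makes it go through.
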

\begin{proof} Let $T=\Delta\cap\ZZ^n$. Put a lattice order $\leq$ on $\ZZ^n$ such that $(1,0,\ldots,0)>(0,\ldots,0)$.
Let $I=\{(0,0,\ldots,0),(1,0,\ldots,0)\}$. For any $P_1<\ldots<P_p$ in $X$ and $Q\in T$ with first coordinate in $\{p,\ldots,d\}$ we define
\begin{equation}\label{hypercube}
\sum_{i_1,\ldots,i_p\in I}(-1)^{i_1+\ldots+i_p}(P_1+i_1)\wedge\ldots\wedge(P_p+i_p)\otimes(Q-i_1-\ldots-i_p),
\end{equation}
where we abusively write $(-1)^{i_1+\ldots+i_p}$ for the power of $-1$ whose exponent is the first coordinate of $i_1+\ldots+i_p$. These expressions are in the kernel of $\delta$. We will prove that they are a basis of the kernel which proves the theorem because there are exactly $(d-p+1)\binom{\#X}{p}$ of these. We will do so by induction on $\#S$. The case where $p=0$ is easy as the domain and kernel of $\delta$ are both just $T$ and have the points in $T$ as a basis. So suppose $p\geq 1$ and let $x\in\ker\delta$, we will show that it is a linear combination of expressions like (\ref{hypercube}). Let $P_M$ be the maximum of $S$. If $P_M\notin supp(x)$ we apply the induction hypothesis to $S'=S\backslash\{P_M\}$ and we are done. So assume $P_M\in supp(x)$, then by lemma \ref{Pwedge} we can write $x=P_M\wedge y$ plus terms not containing $P_M$ in the $\wedge$ part. Here $y\in\bigwedge^pS'\otimes T'$ where $T'=\{(0,0,\ldots,0),\ldots,(d-1,0,\ldots,0)\}$. Note also that $P_M-(1,0,\ldots,0)\in S$ as otherwise the terms in $\delta(x)$ where $P_M$ is removed from the $\wedge$ would have nothing to cancel against. This is because these would be the only terms of $\delta(x)$ where the point after the $\otimes$ agrees with $P_M$ in all but the first coordinate. Applying the induction hypothesis to $y$ we get
\begin{align}
y=&\sum_j\lambda_j\sum_{i_1,\ldots,i_{p-1}\in I}(-1)^{{i_1}+\ldots+i_{p-1}}\nonumber \\ &(P_{1,j}+i_1)\wedge\ldots\wedge(P_{p-1,j}+i_{p-1})\otimes(Q_j-i_1-\ldots-i_{p-1}).\nonumber
\end{align}
Therefore $x$ can be written as $(-1)^px'$ plus terms not containing $P_M$ in the $\wedge$ part where
\begin{align}
x'=&\sum_j\lambda_j\sum_{i_1,\ldots,i_p\in I}(-1)^{i_1+\ldots+i_p}\nonumber \\ &(P_{1,j}+i_1)\wedge\ldots\wedge(P_{p-1,j}+i_{p-1})\wedge(P_M-(1,0,\ldots,0)+i_p)\nonumber \\
&\otimes(Q_j+(1,0,\ldots,0)-i_1-\ldots-i_p).\nonumber
\end{align}
So we can apply the induction hypothesis to $x-(-1)^px'$ to conclude that $x$ is a linear combination of expressions like (\ref{hypercube}).\newline Finally, we show linear independence of the expressions, again by induction on $\#S$. the case $p=0$ is again trivial, let  $p\geq 1$. Let $\sum_i\lambda_ix_i$ be a linear combination of our expressions that yields zero. Each $x_i$ containing $P_M$ in its support can be written as $P_M\wedge y_i$ plus terms not containing $P_M$.
Then up to sign $y_i$ is an expression like (\ref{hypercube}) but with $p-1$ instead of $p$ and with the set $S\backslash\{P_M\}$ in stead of $S$. By the induction hypothesis the $y_i$ are linearly independent. But then it follows that $P_M$ cannot occur at all in the wedge part of any $x_i$, otherwise the linear combination could not yield zero. And then one again applies the induction hypothesis with $S\backslash\{P_M\}$ to obtain a contradiction.
\end{proof}


\begin{thebibliography}{99}
\bibitem{nagelaprodu} Marian Aprodu and Jan Nagel, \emph{Koszul cohomology and algebraic geometry}, University Lecture Series \textbf{52}, American Mathematical Society, 125 pp.\ (2010)

\bibitem{brunsconca} Winfried Bruns, Aldo Conca, Tim R\"omer, \emph{Koszul homology and syzygies of Veronese subalgebras}, Mathematische Annalen \textbf{351}(4), pp.\ 761-779 (2011)

\bibitem{bettitoricsurfaces} Wouter Castryck, Filip Cools, Jeroen Demeyer, Alexander Lemmens, \emph{Computing graded Betti tables of toric surfaces}, preprint

\bibitem{betticurves} Wouter Castryck, Filip Cools, Alexander Lemmens, \emph{Canonical 
 syzygies of smooth curves in toric surfaces}, preprint

\bibitem{coxlittleschenck} David Cox, John Little and Hal Schenck, \emph{Toric varieties}, Graduate Studies in Mathematics \textbf{124}, American Mathematical Society (2011)

\bibitem{EinErmanLazarsfeld} Lawrence Ein, Daniel Erman, Robert Lazarsfeld, \emph{A quick proof of nonvanishing for asymptotic syzygies}, Algebraic Geometry \textbf{3}(2), pp.\ 211-222 (2016)

\bibitem{einlazarsfeld} Lawrence Ein, Robert Lazarsfeld, \emph{Asymptotic Syzygies of Algebraic Varieties}, Inventiones Mathematicae \textbf{190}, pp.\ 603-646 (2012)

\bibitem{eisenbud} David Eisenbud, \emph{The geometry of syzygies. A second course in commutative algebra
and algebraic geometry}, Graduate Texts in Mathematics \textbf{229}, Springer-Verlag, New York (2005)

\bibitem{gallego} Francisco J.\ Gallego and Bangere P.\ Purnaprajna, \emph{Some results on rational surfaces and Fano varieties}, Journal f\"ur die Reine und Angewandte Mathematik \textbf{538}, pp.\ 25-55 (2001) 

\bibitem{greco} Ornella Greco and Ivan Martino, \emph{Syzygies of the Veronese modules}, Communications in Algebra \textbf{44}(9), pp. 3890-3906

\bibitem{heringphd} Milena Hering, \emph{Syzygies of toric varieties}, Ph.D.\ thesis, University of Michigan (2006)

\bibitem{loose} Frank Loose, \emph{On the graded Betti numbers of plane algebraic curves}, Manuscripta Mathematica \textbf{64}(4), pp.\ 503-514 (1989)

\bibitem{rubei} Elena Rubei, \emph{A result on resolutions of Veronese embeddings}, Annali dell'Universit\`a di Ferrara Sezione VII. Scienze Matematiche \textbf{50}, pp.\ 151-165 (2004)

\bibitem{ottaviani} Giorgio Ottaviani, Raffaela Paoletti, \emph{Syzygies of Veronese embeddings}, Compositio Mathematica \textbf{125}(1), pp.\ 31-37 (2001)

\bibitem{park} Euisung Park, \emph{On syzygies of Veronese embedding of arbitrary projective varieties}, Journal of Algebra \textbf{322}(1), pp.\ 108-121 (2009)

\bibitem{schenck} Hal Schenck, \emph{Lattice polygons and Green's theorem}, Proceedings of the American Mathematical Society \textbf{132}(12), pp.\ 3509-3512 (2004)

\bibitem{schreyer}  Frank-Olaf Schreyer, \emph{Syzygies of canonical curves and special linear series}, Mathematische Annalen \textbf{275}, pp.\ 105-138 (1986)
\end{thebibliography}
\end{document}